\theoremstyle{plain}
\newtheorem{theorem}{Theorem}[section]
\newtheorem{lemma}[theorem]{Lemma}
\newtheorem{proposition}[theorem]{Proposition}
\newtheorem{corollary}[theorem]{Corollary}
\theoremstyle{definition}
\newtheorem{definition}[theorem]{Definition}
\theoremstyle{remark}
\newtheorem{remark}[theorem]{Remark}
\numberwithin{equation}{section}
\numberwithin{figure}{section}
\author{K\^az\i m \.Ilhan \.IKEDA and Erol SERBEST}
\title{Fesenko reciprocity map}
\date{}
\begin{document}

\maketitle

\begin{flushright}
\textit{Dedicated to our teacher Mehpare Bilhan}
\end{flushright}
\begin{abstract}
In \cite{fesenko-2000, fesenko-2001, fesenko-2005}, Fesenko has defined 
the non-abelian local reciprocity map
for every totally-ramified arithmetically profinite ($APF$) Galois extension
of a given local field $K$ by extending the works of
Hazewinkel \cite{hazewinkel} and Neukirch-Iwasawa \cite{neukirch}.
The theory of Fesenko extends the previous non-abelian generalizations
of local class field theory given by Koch-de Shalit
\cite{koch-deshalit} and by A. Gurevich \cite{gurevich}. 
In this paper, which is
research-expository in nature, we give a detailed account of Fesenko's work, 
and include all the proofs that are skipped in 
\cite{fesenko-2000, fesenko-2001, fesenko-2005}.

\noindent
\textbf{2000 Mthematics Subject Classification} : Primary 11S37

\noindent
\textbf{Keywords} : Local fields, higher-ramification theory, 
$APF$-extensions, 
Fontaine-Wintenberger field of norms, Fesenko reciprocity map, non-abelian 
local class field theory, $p$-adic local Langlands correspondence.
\end{abstract}


In a series of very interesting papers \cite{fesenko-2000, fesenko-2001, 
fesenko-2005}, Fesenko 
has defined the non-abelian local reciprocity map
for every totally-ramified arithmetically profinite ($APF$) Galois extension
of a given local field $K$ by extending the works of
Hazewinkel \cite{hazewinkel} and Neukirch-Iwasawa \cite{neukirch}.
``Fesenko theory'' extends the previous non-abelian generalizations
of local class field theory given by Koch and de Shalit
in \cite{koch-deshalit} and by A. Gurevich in \cite{gurevich}. 

In this paper, which is research-expository in nature, we give a very detailed 
account of Fesenko's work \cite{fesenko-2000, fesenko-2001, fesenko-2005},  
thereby complementing them by including
all the proofs. Let us describe how our paper is organized: 
In the first part, we briefly review abelian local class field theory and the 
construction of the local Artin reciprocity map following Hazewinkel method
and Neukirch-Iwasawa method.  
In parts 3 and 4, following \cite{fesenko-vostokov},
\cite{fontaine-wintenberger-1, fontaine-wintenberger-2}, and 
\cite{wintenberger-1983}, we review
the theory of $APF$-extensions over $K$, and sketch the construction of
Fontaine-Wintenberger's field of norms $\mathbb X(L/K)$ attached to an
$APF$-extension $L/K$. 
In order to do so, in part 2, we briefly review the ramification theory of $K$.
Finally, in part 5, we give a detailed construction of the Fesenko
reciprocity map $\Phi_{L/K}^{(\varphi)}$ defined for any totally-ramified 
and $APF$-Galois extension $L$ over $K$ under the assumption that
$\pmb{\mu}_p(K^{sep})\subset K$, where $p=\text{char}(\kappa_K)$, and 
investigate the functorial 
and ramification-theoretic properties of the Fesenko reciprocity maps defined 
for totally-ramified and $APF$-Galois extensions over $K$. 

In a companion paper \cite{ikeda-serbest-1}, 
we shall extend the construction of 
Fesenko to \textit{any} Galois extension of $K$ (in a fixed $K^{sep}$), and
construct the non-abelian local class field theory. 
Thus, we feel that, the present paper together with \cite{fesenko-2000, 
fesenko-2001, fesenko-2005} 
should be viewed as the technical and theoretical background, an introduction, 
as well as an appendix to our companion paper \cite{ikeda-serbest-1} 
on ``generalized Fesenko theory''.
A similar theory has been announced by Laubie in \cite{laubie}, which is
an extension of the work of Koch and de Shalit \cite{koch-deshalit}. The
relationship of Laubie theory with our generalized Fesenko theory will be 
investigated in our companion paper as well.

\subsection*{Acknowledgements}
The first named author (K. I. I) would like to thank to the Institut de
Math{\'e}matiques, ``Th{\'e}orie des Groupes, Repr{\'e}sentations et
Applications'', Universit{\'e} Paris 7, Jussieu, Paris, and to
the School of Pure Mathematics of the Tata Institute of Fundamental
Research, Mumbai, for their hospitality and support, where
some parts of this work have been completed. The second named author (E. S.)
would like to thank to T\"UBITAK for a fellowship, and to the School of 
Mathematics of the University of 
Nottingham for the hospitality and support, where some parts of this work
have been introduced. Both of the authors would like to thank I. B. Fesenko
for his interest and encouragement at all stages of this work.

\subsection*{Notation}
All through this work, $K$ will denote a local field (a complete
discrete valuation field) with finite residue
field $O_K/\mathfrak p_K=:\kappa_K$ of $q_K=q=p^f$ elements with $p$ a
prime number, where $O_K$
denotes the ring of integers in $K$ with the unique maximal 
ideal $\mathfrak p_K$.
Let $\pmb{\nu}_K$ denote the corresponding normalized valuation on $K$
(normalized by $\pmb{\nu}_K(K^\times)=\mathbb Z$), and
$\widetilde{\pmb{\nu}}$ the unique extension of $\pmb{\nu}_K$ to a fixed
separable closure $K^{sep}$ of $K$. For any sub-extension $L/K$ of
$K^{sep}/K$, the normalized form of the
valuation $\widetilde{\pmb{\nu}}\mid_L$ on $L$ will be denoted by
$\pmb{\nu}_L$.
Finally, let $G_K$ denote the absolute Galois group $\text{Gal}(K^{sep}/K)$.

\section{Abelian local class field theory}
Let $K$ be a local field.
Fix a separable closure $K^{sep}$ of the local field $K$.
Let $G_K$ denote the absolute Galois group $\text{Gal}(K^{sep}/K)$ 
of $K$. By the construction of absolute Galois groups, 
$G_K$ is a pro-finite topological group with respect to the Krull
topology. Now let $G_K^{ab}$ denote the maximal abelian Hausdorff 
quotient group $G_K/G_K'$ of the topological group $G_K$, where $G_K'$
denotes the closure of the $1^{st}$-commutator subgroup 
$[G_K,G_K]$ of $G_K$.

Recall that, abelian local class field theory for the local field $K$
establishes a unique natural algebraic and topological isomorphism
\begin{equation*}
\alpha_K:\widehat{K^\times}\xrightarrow{\sim} G_K^{ab},
\end{equation*}
where the topological group $\widehat{K^\times}$ denotes the
pro-finite completion of the multiplicative group $K^\times$, satisfying
the following conditions
\begin{itemize}
\item[(1)]
Let $W_K$ denote the Weil group of $K$. Then
\begin{equation*}
\alpha_K(K^\times)=W_K^{ab};
\end{equation*}
\item[(2)]
for every abelian extension $L/K$ (always assumed to be a
sub-extension of $K^{sep}/K$, where a separable closure $K^{sep}$ 
of $K$ is fixed all through the remainder of the text), the 
surjective and continuous homomorphism
\begin{equation*}
\alpha_{L/K}:\widehat{K^\times}\xrightarrow{\alpha_K}G_K^{ab}
\xrightarrow{\text{res}_L}\text{Gal}(L/K)
\end{equation*}
satisfies
\begin{equation*}
\text{ker}(\alpha_{L/K})=N_{L/K}(\widehat{L^\times})=
\bigcap_{K\underset{\text{finite}}\subseteq F\subseteq L}
N_{F/K}(\widehat{F^\times})=:\mathcal N_L;
\end{equation*}
\item[(3)]
for each abelian extension $L/K$, the mapping
\begin{equation*}
L\mapsto\mathcal N_L
\end{equation*}
defines a bijective correspondence 
\begin{equation*}
\{L/K:\text{abelian}\}\leftrightarrow
\{\mathcal N:\mathcal N\underset{\text{closed}}\leq\widehat{K^\times}\},
\end{equation*}
which satisfies the following conditions: for every abelian extension
$L,L_1$, and $L_2$ over $K$,
\begin{itemize}
\item[(i)]
$L/K$ is a finite extension if and only if $\mathcal
N_L\underset{\text{open}}\leq\widehat{K^\times}$ 
(which is equivalent to $(\widehat{K^\times}:\mathcal N_L)<\infty$);
\item[(ii)]
$L_1\subseteq L_2\Leftrightarrow\mathcal N_{L_1}\supseteq\mathcal
N_{L_2}$;
\item[(iii)]
$\mathcal N_{L_1L_2}=\mathcal N_{L_1}\cap\mathcal N_{L_2}$;
\item[(iv)]
$\mathcal N_{L_1\cap L_2}=\mathcal N_{L_1}\mathcal N_{L_2}$.
\end{itemize}
\item[(4)]
(Ramification theory)\footnote{We shall review the higher-ramification 
subgroups $\text{Gal}(L/K)^\nu$ of $\text{Gal}(L/K)$ (in upper-numbering) 
in the next section.}. Let $L/K$ be an abelian extension.
For every integer $0\leq i\in\mathbb Z$ and for every real number 
$\nu\in (i-1,i]$,
\begin{equation*}
x\in U_K^i\mathcal
N_L\Leftrightarrow\alpha_{L/K}(x)\in\text{Gal}(L/K)^\nu ,
\end{equation*} 
where $x\in\widehat{K^\times}$. 
\item[(5)]
(Functoriality). Let $L/K$ be an abelian extension.
\begin{itemize}
\item[(i)]
For $\gamma\in\text{Aut}(K)$,
\begin{equation*}
\alpha_K(\gamma(x))=\widetilde{\gamma}\alpha_K(x)\widetilde{\gamma}^{-1},
\end{equation*}
for every $x\in\widehat{K^\times}$, where
$\widetilde{\gamma}:K^{ab}\rightarrow K^{ab}$ is any automorphism of
the field $K^{ab}$ satisfying $\widetilde{\gamma}\mid_K=\gamma$;
\item[(ii)]
under the condition $[L:K]<\infty$,
\begin{equation*}
\alpha_L(x)\mid_{K^{ab}}=\alpha_K(N_{L/K}(x)),
\end{equation*}
for every $x\in\widehat{L^\times}$;
\item[(iii)]
under the condition $[L:K]<\infty$,
\begin{equation*}
\alpha_L(x)=V_{K\rightarrow L}\left(\alpha_K(x)\right),
\end{equation*}
for every $x\in\widehat{K^\times}$, where $V_{K\rightarrow
  L}:G_K^{ab}\rightarrow G_L^{ab}$ is the group-theoretic transfer
  homomorphism (Verlagerung).
\end{itemize}
\end{itemize}
This unique algebraic and topological isomorphism 
$\alpha_K:\widehat{K^\times}\rightarrow G_K^{ab}$ is
called the \textit{local Artin reciprocity map of $K$}.

There are many constructions of the local Artin reciprocity map of $K$ 
including the \textit{cohomological} and \textit{analytical} constructions.
Now, in the remainder of this section, we shall review the 
construction of the local Artin reciprocity
map $\alpha_K:\widehat{K^\times}\rightarrow G_K^{ab}$ of $K$ following 
Hazewinkel (\cite{hazewinkel}) and Iwasawa-Neukirch (\cite{iwasawa}, 
\cite{neukirch}). 
As usual, let $K^{nr}$ denote the maximal unramified extension of
$K$. It is well-known that $K^{nr}$ is not a complete field with
respect to the valuation $\nu_{K^{nr}}$ on $K^{nr}$ induced from the valuation
$\nu_K$ of $K$. Let $\widetilde{K}$ denotes the completion of $K^{nr}$
with respect to the valuation $\nu_{K^{nr}}$ on $K^{nr}$. For a
Galois extension $L/K$, let $L^{nr}=LK^{nr}$ and
$\widetilde{L}=L\widetilde{K}$. For each $\tau\in\text{Gal}(L/K)$,
choose $\tau^*\in\text{Gal}(L^{nr}/K)$ in such a way that:
\begin{itemize}
\item[(1)]
$\tau^*\mid_L=\tau$;
\item[(2)]
$\tau^*\mid_{K^{nr}}=\varphi^n$, for some $0<n\in\mathbb Z$, where
$\varphi\in\text{Gal}(K^{nr}/K)$ denotes the (arithmetic) Frobenius 
automorphism of $K$. 
\end{itemize}
Let the fixed-field $(L^{nr})^{\tau^*}=\{x\in L^{nr}:\tau^*(x)=x\}$ of
this chosen $\tau^*\in\text{Gal}(L^{nr}/K)$ in $L^{nr}$ be denoted by 
$\Sigma_{\tau^*}$, which satisfies $[\Sigma_{\tau^*}:K]<\infty$.

The Iwasawa-Neukirch mapping
\begin{equation*}
\iota_{L/K}:\text{Gal}(L/K)\rightarrow K^\times/N_{L/K}(L^\times)
\end{equation*}
is then defined by
\begin{equation*}
\iota_{L/K}:\tau\mapsto
N_{\Sigma_{\tau^*}/K}(\pi_{\Sigma_{\tau^*}})\mod{N_{L/K}(L^\times)}, 
\end{equation*}
for every $\tau\in\text{Gal}(L/K)$, where $\pi_{\Sigma_{\tau^*}}$ 
denotes any prime element of $\Sigma_{\tau^*}$.

Suppose now that the Galois extension $L/K$ is furthermore a 
totally-ramified and finite extension. Let $\widetilde{V}(L/K)$ be the
subgroup of the unit group $U_{\widetilde{L}}=O_{\widetilde{L}}^\times$
of the ring of integers $O_{\widetilde{L}}$ of the local field
$\widetilde{L}$ defined by
\begin{equation*}
\widetilde{V}(L/K)=\left<u^{\sigma-1}:u\in U_{\widetilde{L}},
~\sigma\in\text{Gal}(L/K)\right>.
\end{equation*}
Then the homomorphism
\begin{equation*}
\theta:\text{Gal}(L/K)\rightarrow U_{\widetilde{L}}/\widetilde{V}(L/K)
\end{equation*}
defined by
\begin{equation*}
\theta:\sigma\mapsto\frac{\sigma(\pi_L)}{\pi_L}\mod{\widetilde{V}(L/K)},
\end{equation*}
for every $\sigma\in\text{Gal}(L/K)$, makes the following triangle
\begin{equation*}
\SelectTips{cm}{}\xymatrix{
{\text{Gal}(L/K)}\ar[dd]_{\text{can.}}\ar[dr]^{\theta} & \\
 & {U_{\widetilde{L}}/\widetilde{V}(L/K)} \\
{\text{Gal}(L/K)^{ab}}\ar[ur]_{\theta_o} & 
}
\end{equation*}
commutative. The quotient $U_{\widetilde{L}}/\widetilde{V}(L/K)$ 
sits in the Serre short exact sequence
\begin{equation}
\label{serre1}
1\rightarrow\text{Gal}(L/K)^{ab}\xrightarrow{\theta_o}
U_{\widetilde{L}}/\widetilde{V}(L/K)
\xrightarrow{N_{\widetilde{L}/\widetilde{K}}} U_{\widetilde{K}}\rightarrow 1.
\end{equation}
Let $V(L/K)$ denote the subgroup of the unit group $U_{L^{nr}}$ of the
ring of integers $O_{L^{nr}}$ of the maximal unramified extension
$L^{nr}$ of the local field $L$ defined by
\begin{equation*}
V(L/K)=\left<u^{\sigma-1}:u\in U_{L^{nr}},~\sigma\in\text{Gal}(L/K)\right>.
\end{equation*}  
The quotient $U_{L^{nr}}/V(L/K)$ sits in the Serre short exact sequence
\begin{equation}
\label{serre2}
1\rightarrow\text{Gal}(L/K)^{ab}\xrightarrow{\theta_o}
U_{L^{nr}}/V(L/K)\xrightarrow{N_{L^{nr}/K^{nr}}} U_{K^{nr}}\rightarrow 1.
\end{equation}
As before, let
$\varphi\in\text{Gal}(K^{nr}/K)$ denote the Frobenius automorphism of
$K$. Fix any extension of the automorphism $\varphi$ of $K^{nr}$ 
to an automorphism of $L^{nr}$, denoted again by $\varphi$. 
Now, for any $u\in U_K$, there exists $v_u\in U_{L^{nr}}$, such that
$u=N_{L^{nr}/K^{nr}}(v_u)$. 
Then, the equality
\begin{equation*}
N_{L^{nr}/K^{nr}}(\varphi(v_u))=\varphi(N_{L^{nr}/K^{nr}}(v_u))
=\varphi(u)=u,
\end{equation*}
combined with the Serre short exact sequence, 
yields the existence of $\sigma_u\in\text{Gal}(L/K)^{ab}$ satisfying
\begin{equation*}
\theta_o(\sigma_u)=\frac{\sigma_u(\pi_L)}{\pi_L}=\frac{v_u}{\varphi(v_u)}.
\end{equation*}

The Hazewinkel mapping
\begin{equation*}
h_{L/K}:U_K/N_{L/K}U_L\rightarrow\text{Gal}(L/K)^{ab}
\end{equation*}
is then defined by
\begin{equation*}
h_{L/K}:u\mapsto\sigma_u,
\end{equation*}
for every $u\in U_K$.

It turns out that, for $L/K$ totally-ramified and finite Galois
extension, the Hazewinkel mapping 
$h_{L/K}:U_K/N_{L/K}U_L\rightarrow\text{Gal}(L/K)^{ab}$ and 
the Iwasawa-Neukirch mapping 
$\iota_{L/K}:\text{Gal}(L/K)\rightarrow K^\times/N_{L/K}(L^\times)$
are the inverses of each other. Thus, by the uniqueness of the local Artin
reciprocity map $\alpha_K:\widehat{K^\times}\rightarrow
G_K^{ab}$ of the local field $K$, it follows that the Hazewinkel map, 
the Iwasawa-Neukirch map,
and the local Artin map are related with each other as 
\begin{equation*}
h_{L/K}=\alpha_{L/K}
\end{equation*}
and
\begin{equation*}
\iota_{L/K}=\alpha_{L/K}^{-1}.
\end{equation*}
\section{Review of ramification theory}
In this section, we shall review the higher-ramification subgroups in
upper-numbering of the absolute Galois group $G_K$ of the local field
$K$, which is necessary in the theory of $APF$-extensions over $K$.
The main reference that we follow for this section is \cite{serre}.

For a finite separable extension $L/K$, and for any
$\sigma\in\text{Hom}_K(L,K^{sep})$, introduce
\begin{equation*}
i_{L/K}(\sigma):=\min\limits_{x\in O_L}
\left\{{\pmb{\nu}}_L(\sigma(x)-x)\right\},
\end{equation*}
put
\begin{equation*}
\gamma_t:=\#\left\{\sigma\in\text{Hom}_K(L,K^{sep}):~i_{L/K}(\sigma)\geq
t+1\right\},
\end{equation*}
for $-1\leq t\in\mathbb R$, and define the function 
$\varphi_{L/K}:\mathbb R_{\geq -1}\rightarrow\mathbb R_{\geq -1}$, 
the \textit{Hasse-Herbrand transition function of the extension $L/K$}, by
\begin{equation*}
\varphi_{L/K}(u):=
\begin{cases}\int_0^u\frac{\gamma_t}{\gamma_0}dt, & 0\leq u\in\mathbb R,\\
u, & -1\leq u\leq 0.\end{cases}
\end{equation*}
It is well-known that, $\varphi_{L/K}:\mathbb R_{\geq -1}\rightarrow\mathbb
R_{\geq -1}$ is a continuous, monotone-increasing, piecewise linear
function, and
induces a homeomorphism 
$\mathbb R_{\geq -1}\xrightarrow{\approx}\mathbb R_{\geq -1}$. 
Now, let $\psi_{L/K}:\mathbb R_{\geq -1}\rightarrow\mathbb R_{\geq -1}$ 
be the mapping inverse to the function $\varphi_{L/K}:
\mathbb R_{\geq -1}\rightarrow\mathbb R_{\geq -1}$.

Assume that $L$ is a finite Galois extension over $K$ with Galois
group $\text{Gal}(L/K)=:G$. The normal subgroup $G_u$ of $G$ defined by
\begin{equation*}
G_u=\{\sigma\in G:~i_{L/K}(\sigma)\geq u+1\}
\end{equation*}
for $-1\leq u\in\mathbb R$ is called the \textit{$u^{th}$ ramification
group of $G$ in the lower numbering}, and has order $\gamma_u$. 
Note that, there is the inclusion $G_{u'}\subseteq G_u$ for every pair
$-1\leq u,u'\in\mathbb R$ satisfying $u\leq u'$. The
family $\{G_u\}_{u\in\mathbb R_{\geq -1}}$ induces a
filtration on $G$, called the \textit{lower ramification filtration of $G$}.
A \textit{break in the lower ramification filtration} 
$\{G_u\}_{u\in\mathbb R_{\geq -1}}$ of $G$ is
defined to be any number $u\in\mathbb R_{\geq -1}$ satisfying $G_u\neq
G_{u+\varepsilon}$ for every $0<\varepsilon\in\mathbb R$.
The function $\psi_{L/K}=\varphi^{-1}_{L/K}:\mathbb R_{\geq
-1}\rightarrow\mathbb R_{\geq -1}$
induces the \textit{upper ramification filtration} 
$\{G^v\}_{v\in\mathbb R_{\geq -1}}$ on $G$ by setting
\begin{equation*}
G^v:=G_{\psi_{L/K}(v)},
\end{equation*}
or equivalently, by setting
\begin{equation*}
G^{\varphi_{L/K}(u)}=G_u
\end{equation*}
for $-1\leq v,u\in\mathbb R$, where $G^v$ is called the 
\textit{$v^{th}$ upper ramification group of $G$}. A \textit{break in
the upper filtration}
$\{G^v\}_{v\in\mathbb R_{\geq -1}}$ of $G$ is defined to be any number
$v\in\mathbb R_{\geq -1}$ satisfying $G^v\neq G^{v+\varepsilon}$ for every
$0<\varepsilon\in\mathbb R$.
\begin{remark}
The basic properties of lower and upper ramification filtrations on $G$
are as follows:

\noindent 
In what follows, $F/K$ denotes a sub-extension of $L/K$ and $H$ denotes
the Galois group $\text{Gal}(L/F)$ corresponding to the extension $L/F$.
\begin{itemize}
\item[(i)] The lower numbering on $G$ passess well to the subgroup $H$ 
of $G$ in the sense that
\begin{equation*}
H_u=H\cap G_u
\end{equation*}
for $-1\leq u\in\mathbb R$;
\item[(ii)] and if furthermore, $H\triangleleft G$, the upper 
numbering on $G$ passess well to the quotient $G/H$ as
\begin{equation*}
(G/H)^v=G^vH/H
\end{equation*}
for $-1\leq v\in\mathbb R$.
\item[(iii)] The Hasse-Herbrand function and its inverse
satisfy the transitive law 
\begin{equation*}
\varphi_{L/K}=\varphi_{F/K}\circ\varphi_{L/F}
\end{equation*}
and
\begin{equation*}
\psi_{L/K}=\psi_{L/F}\circ\psi_{F/K}.
\end{equation*}
\end{itemize}
\noindent
If $L/K$ is an \textit{infinite} Galois extension with Galois group
$\text{Gal}(L/K)=G$, which is a topological group under the respective 
Krull topology, define 
the upper ramification filtration
$\{G^v\}_{v\in\mathbb R_{\geq -1}}$ on $G$ by the projective limit
\begin{equation}
\label{upper-ramification-infinite-ext}
G^v:=\varprojlim\limits_{K\subseteq F\subset L}\text{Gal}(F/K)^v
\end{equation}
defined over the transition morphisms
$t_F^{F'}(v):\text{Gal}(F'/K)^v\rightarrow\text{Gal}(F/K)^v$, which are
essentially the restriction morphisms from $F'$ to $F$, defined
naturally by the diagram
\begin{equation}
\label{upper-ramification-infinite-ext-transition-function}
\SelectTips{cm}{}\xymatrix{
\text{Gal}(F/K)^v & &
\text{Gal}(F'/K)^v\ar[ll]_{t_F^{F'}(v)}\ar[dddl]^{\text{can.}} \\
 &  \\
 & \\
 &
 \text{Gal}(F'/K)^v\text{Gal}(F'/F)/\text{Gal}(F'/F)
\ar[uuul]^{\txt{isomorphism\\ introduced in $(ii)$}} &
}
\end{equation}
induced from (ii), as $K\subseteq F\subseteq F'\subseteq L$ runs over all
finite Galois extensions $F$ and $F'$ over $K$ inside $L$. 
The topological subgroup $G^v$ of $G$ is called the 
\textit{$v^{th}$ ramification group of $G$ in the upper numbering}.
Note that, there is the inclusion $G^{v'}\subseteq G^v$ for every pair
$-1\leq v,v'\in\mathbb R$ satisfying $v\leq v'$ via the commutativity of
the square 
\begin{equation}
\label{upper-ramification-infinite-ext-inclusion}
\SelectTips{cm}{}\xymatrix{
{\text{Gal}(F/K)^v} & & {\text{Gal}(F'/K)^v}\ar[ll]_-{t_F^{F'}(v)} \\
{\text{Gal}(F/K)^{v'}}\ar[u]^{\text{inc.}} & & {\text{Gal}(F'/K)^{v'}}
\ar[ll]_-{t_F^{F'}(v')}\ar[u]_{\text{inc.}}
}
\end{equation}
for every chain $K\subseteq F\subseteq F'\subseteq L$ of finite Galois
extensions $F$ and $F'$ over $K$ inside $L$.
Observe that
\begin{itemize}
\item[(iv)] $G^{-1}=G$ and $G^0$ is the inertia subgroup of $G$;
\item[(v)] $\bigcap\limits_{v\in\mathbb R_{\geq -1}}G^v=\left<1_G\right>$;
\item[(vi)] $G^v$ is a closed subgroup of $G$, with respect to the Krull
topology, for $-1\leq v\in\mathbb R$.
\end{itemize}
\noindent
In this setting, a number $-1\leq v\in\mathbb R$ is said to be a 
\textit{break in the upper ramification filtration} 
$\{G^v\}_{v\in\mathbb R_{\geq -1}}$ of $G$, if
$v$ is a break in the upper filtration of some finite quotient $G/H$ for
some $H\triangleleft G$. Let $\mathcal B_{L/K}$ denotes the set of all numbers
$v\in\mathbb R_{\geq -1}$, which occur as breaks in the upper ramification
filtration of $G$. Then,
\begin{itemize}
\item[(vii)] \textbf{(Hasse-Arf theorem.)} $\mathcal B_{K^{ab}/K}\subseteq 
\mathbb Z\cap\mathbb R_{\geq -1}$;
\item[(viii)] $\mathcal B_{K^{sep}/K}\subseteq\mathbb Q\cap
\mathbb R_{\geq -1}$.
\end{itemize}
\end{remark}
\section{$APF$-extensions over $K$}
In this section, we shall briefly review a very important class of
algebraic extensions, called the $APF$-extensions, over a
local field $K$ introduced by Fontaine and Wintenberger
(cf. \cite{fontaine-wintenberger-1, fontaine-wintenberger-2} and 
\cite{wintenberger-1983}).
As in the previous section, let $\{G_K^v\}_{v\in\mathbb R_{\geq -1}}$ denote
the upper ramification filtration of the absolute Galois group $G_K$ of
$K$, and let $R^v$ denote the fixed 
field $(K^{sep})^{G_K^v}$ of the $v^{th}$ upper ramification subgroup 
$G_K^v$ of $G_K$ in $K^{sep}$ for $-1\leq v\in\mathbb R$.
\begin{definition}
\label{APF}
An extension $L/K$ is called an \textit{$APF$-extension} ($APF$ is
the shortening for ``arithm{\'e}tiquement profinie''), if one of the
following equivalent conditions is satisfied: 
\begin{itemize}
\item[(i)] $G_K^vG_L$ is open in $G_K$ for every $-1\leq v\in\mathbb R$;
\item[(ii)] $(G_K:G_K^v G_L)<\infty$ for every $-1\leq v\in\mathbb R$;
\item[(iii)] $L\cap R^v$ is a finite extension over $K$ for every
$-1\leq v\in\mathbb R$.
\end{itemize}
\end{definition} 
Note that, if $L/K$ is an $APF$-extension, then 
$[\kappa_L:\kappa_K]<\infty$.

Now, let $L/K$ be an $APF$-extension. Set $G_L^0=G_L\cap G_K^0$, and define
\begin{equation}
\label{apf-hasse-herbrand}
\varphi_{L/K}(v)=
\begin{cases} 
\int_0^v(G_K^0:G_L^0G_K^x)dx,& 0\leq v\in\mathbb R;\\
v, &- 1\leq v\leq 0.
\end{cases}
\end{equation}
Then the map $v\mapsto\varphi_{L/K}(v)$ for $v\in\mathbb R_{\geq -1}$, which
is well-defined for the $APF$-extension $L/K$, defines a continuous, 
strictly-increasing and piecewise-linear
bijection $\varphi_{L/K}:\mathbb R_{\geq -1}\rightarrow\mathbb R_{\geq -1}$.
We denote the inverse of this mapping by
$\psi_{L/K}:=\varphi_{L/K}^{-1}:\mathbb R_{\geq -1}\rightarrow\mathbb
R_{\geq -1}$.

Thus, if $L/K$ is a (not necessarly finite) Galois $APF$-extension, 
then we can define the higher ramification subgroups in lower numbering
$\text{Gal}(L/K)_u$ of $\text{Gal}(L/K)$, for $-1\leq u\in\mathbb R$,
by setting
\begin{equation*}
\text{Gal}(L/K)_u:=\text{Gal}(L/K)^{\varphi_{L/K}(u)} .
\end{equation*}  
\begin{remark}
Note that,
\begin{itemize}
\item[(i)] In case $L/K$ is a finite separable extension, which is clearly an
$APF$-extension by Definition \ref{APF}, the function
$\varphi_{L/K}:\mathbb
R_{\geq -1}\rightarrow\mathbb R_{\geq -1}$ coincides with the Hasse-Herbrand
transition function of $L/K$ introduced in the previous section;
\item[(ii)] if $L/K$ is a finite separable extension and $L'/L$ is an
$APF$-extension, then $L'/K$ is an $APF$-extension, and the transitivity
rules for the functions
$\varphi_{L'/K},\psi_{L'/K}:\mathbb R_{\geq -1}\rightarrow\mathbb R_{\geq -1}$
hold by
\begin{equation*}
\varphi_{L'/K}=\varphi_{L/K}\circ\varphi_{L'/L}
\end{equation*} 
and by
\begin{equation*}
\psi_{L'/K}=\psi_{L'/L}\circ\psi_{L/K}.
\end{equation*}
\end{itemize}
\end{remark}
The following result will be extremely useful.
\begin{lemma}
\label{apftower}
Suppose that $K\subseteq F\subseteq L\subseteq K^{sep}$ is a tower of
field extensions in $K^{sep}$ over $K$. Then,
\begin{itemize}
\item[(i)] If $[F:K]<\infty$, then $L/K$ is an $APF$-extension if and 
only if $L/F$ is an $APF$-extension.
\item[(ii)] If $[L:F]<\infty$, then $L/K$ is an $APF$-extension if and
 only if $F/K$ is an $APF$-extension.
\item[(iii)] If $L/K$ is an $APF$-extension, then 
$F/K$ is an $APF$-extension.
\end{itemize}
\end{lemma}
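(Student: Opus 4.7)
The plan is to use different equivalent characterizations from Definition 3.1 for different parts: characterization (iii) (finite-degree intersection $L \cap R^v$) gives slick proofs of (iii) and of both directions of (ii), while (i) is handled via characterization (i) (openness of $G_K^v G_L$) combined with a Herbrand-type identity relating the upper ramification filtrations of $G_K$ and $G_F$.

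For (iii), the containment $F \subseteq L$ forces $F \cap R^v \subseteq L \cap R^v$, so if $L \cap R^v$ is finite over $K$ for every $v \geq -1$, then a fortiori $F \cap R^v$ is. This argument needs no finiteness assumption, and so simultaneously proves the forward direction of (ii). For the backward direction of (ii), with $F/K$ an $APF$-extension and $[L:F] < \infty$, I would decompose $[L \cap R^v : K] = [L \cap R^v : F \cap R^v] \cdot [F \cap R^v : K]$ and bound the first factor by $[L:F]$ via the separable-extension inequality $[M : M \cap F] \leq [MF : F]$ applied inside $L$ with $M = L \cap R^v$; the second factor is finite by hypothesis.

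For (i), the essential input is the Herbrand-type identity $G_F^v = G_F \cap G_K^{\varphi_{F/K}(v)}$, valid for finite separable $F/K$ and $v \geq 0$, together with the Dedekind modular law: since $G_L \subseteq G_F$, one has $(G_K^w G_L) \cap G_F = (G_K^w \cap G_F)\, G_L$. Assuming $L/K$ is $APF$ and setting $w = \varphi_{F/K}(v)$, the right-hand side equals $G_F^v G_L$ while the left-hand side is open in $G_F$, because $G_K^w G_L$ is open in $G_K$ (by hypothesis) and $G_F$ is open of finite index in $G_K$; hence $L/F$ is $APF$. The reverse direction is symmetric: given $L/F$ $APF$, taking $v = \psi_{F/K}(w)$ shows that $G_K^w G_L$ contains a non-empty open subset of $G_F$, hence of $G_K$, and being a subgroup it is then open in $G_K$.

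The main obstacle I anticipate is justifying the Herbrand identity above for a finite separable but not necessarily Galois $F/K$, since Remark 2.1 supplies the quotient formula $(G/H)^v = G^v H/H$ and the subgroup formula $H_u = H \cap G_u$ only in finite Galois situations. The workaround is to pass to the Galois closure $\widetilde{F}/K$ of $F/K$ inside $K^{sep}$ and combine those two formulas with the transitivity $\varphi_{\widetilde{F}/K} = \varphi_{F/K} \circ \varphi_{\widetilde{F}/F}$ of Hasse--Herbrand functions; the trivial range $v \in [-1, 0]$ is handled directly from the definition of $\varphi_{F/K}$ and the fact that $G_K^0$ is the inertia subgroup.
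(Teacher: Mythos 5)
You should note at the outset that the paper does not prove this lemma at all: its ``proof'' is a citation of Proposition 1.2.3 of Wintenberger's 1983 paper. So your argument is necessarily an independent route, and most of it is sound. Part (iii) and the forward half of (ii) are indeed immediate from characterization (iii) of Definition \ref{APF}. Your argument for (i) is also correct: the modular identity $(G_K^wG_L)\cap G_F=(G_K^w\cap G_F)\,G_L$ is valid because $G_L\subseteq G_F$, the set $G_K^wG_L$ is a subgroup because $G_K^w\triangleleft G_K$, $G_F$ is open of finite index in $G_K$, and a subgroup containing a nonempty open set is open; the Herbrand identity $G_F^v=G_F\cap G_K^{\varphi_{F/K}(v)}$ does hold for finite separable, not necessarily Galois, $F/K$ and can be proved as you sketch. (In fact you do not even need the Galois closure: part (i) of Remark 2.1 does not require the intermediate field to be Galois over $K$, so for any finite Galois $E/K$ containing $F$ one has $\text{Gal}(E/F)^v=\text{Gal}(E/K)_{\psi_{E/F}(v)}\cap\text{Gal}(E/F)=\text{Gal}(E/K)^{\varphi_{F/K}(v)}\cap\text{Gal}(E/F)$, since $\varphi_{E/K}\circ\psi_{E/F}=\varphi_{F/K}$, and then one passes to the projective limit over such $E$.)

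The genuine gap is in the backward half of (ii). The ``separable-extension inequality'' you invoke, $[M:M\cap F]\le[MF:F]$, is false in general; the valid general inequality goes the other way, $[MF:F]\le[M:M\cap F]$. For example, $M=\mathbb{Q}(\sqrt[3]{2})$ and $F=\mathbb{Q}(\omega\sqrt[3]{2})$ inside $\overline{\mathbb{Q}}$ satisfy $M\cap F=\mathbb{Q}$, so $[M:M\cap F]=3$ while $[MF:F]=2$. Your intermediate claim $[L\cap R^v:F\cap R^v]\le[L:F]$ is nevertheless true, but its proof must use that $R^v/K$ is Galois (equivalently, that $G_K^v\triangleleft G_K$): any $(F\cap R^v)$-embedding of $L\cap R^v$ into $K^{sep}$ has image in $R^v$ and extends to an element of $\text{Gal}(R^v/F\cap R^v)$, which is the image of $G_F$ under restriction; the resulting $\tau\in G_F$ restricts to an $F$-embedding of $(L\cap R^v)F$, whence $[L\cap R^v:F\cap R^v]\le[(L\cap R^v)F:F]\le[L:F]$. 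More quickly, argue with groups: writing any element of $G_K^vG_F$ as $xh=g(g^{-1}xg)l$ with $x\in G_K^v$, $h=gl$, $l\in G_L$ and $g$ running over representatives of $G_F/G_L$, one gets $G_K^vG_F\subseteq\bigcup_g g\,(G_K^vG_L)$, so $(G_K:G_K^vG_L)\le(G_K:G_K^vG_F)\,[L:F]<\infty$, which is characterization (ii) of Definition \ref{APF}. With either repair your proposal becomes a complete proof.
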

\begin{proof}
For a proof, look at Proposition 1.2.3. of \cite{wintenberger-1983}.
\end{proof}
\section{Fontaine-Wintenberger fields of norms}
Let $L/K$ be an \textit{infinite $APF$-extension}. Let $L_i$ for $0\leq
i\in\mathbb Z$ be an increasing directed-family of sub-extensions in $L/K$
such that: 
\begin{itemize}
\item[(i)] $[L_i:K]<\infty$ for every $0\leq i\in\mathbb Z$;
\item[(ii)] $\bigcup\limits_{0\leq i\in\mathbb Z}L_i=L$.
\end{itemize}
Let 
\begin{equation*}
\mathbb X(L/K)^\times=\varprojlim\limits_{i}L_i^\times
\end{equation*}
be the
projective limit of the multiplicative groups $L_i^\times$ with respect to
the norm homomorphisms 
\begin{equation*}
N_{L_{i'}/L_i}:L_{i'}^\times\rightarrow L_i^\times, 
\end{equation*}
for every $0\leq i,i'\in\mathbb Z$ with $i\leq i'$.
\begin{remark} 
The group $\mathbb X(L/K)^\times$ does not depend on the choice of the
increasing directed-family of sub-extensions $\{L_i\}_{0\leq i\in\mathbb Z}$
in $L/K$ satisfying the conditions $(i)$ and $(ii)$.
Thus, 
\begin{equation*}
\mathbb X(L/K)^\times=\varprojlim\limits_{M\in S_{L/K}}M^\times,
\end{equation*}
where $S_{L/K}$ is the partially-ordered family of all finite
sub-extensions in $L/K$, and the projective limit is with respect to
the norm 
\begin{equation*}
N_{M_2/M_1}:M_2^\times\rightarrow M_1^\times,
\end{equation*} 
for every $M_1,M_2\in S_{L/K}$ with $M_1\subseteq M_2$.
\end{remark}
Put 
\begin{equation*}
\mathbb X(L/K)=\mathbb X(L/K)^\times\cup\{0\},
\end{equation*}
where $0$ is a fixed symbol, and define the addition
\begin{equation*}
+:\mathbb X(L/K)\times\mathbb X(L/K)\rightarrow\mathbb X(L/K)
\end{equation*}
by the rule
\begin{equation*}
(\alpha_M)+(\beta_M)=(\gamma_M),
\end{equation*}
where $\gamma_M\in M$ is defined by the limit
\begin{equation}
\label{addition}
\gamma_M=\lim_{\substack{ M\subset M'\in S_{L/K}\\
[M':M]\rightarrow\infty}}N_{M'/M}(\alpha_{M'}+\beta_{M'}),
\end{equation} 
which exists in the local field $M$, for every $M\in S_{L/K}$.
\begin{remark}
Note that, for $(\alpha_M), (\beta_M)\in\mathbb X(L/K)$, the law of
composition 
\begin{equation*}
\left((\alpha_M),(\beta_M)\right)\mapsto (\alpha_M)+(\beta_M)=(\gamma_M)
\end{equation*}
given by eq. (\ref{addition}) is well-defined, since $L/K$ is
assumed to be an $APF$-extension (cf. Theorem 2.1.3. of 
\cite{wintenberger-1983}).
\end{remark} 
It then follows that, 
\begin{theorem}[Fontaine-Wintenberger]
Let $L/K$ be an $APF$-extension. Then $\mathbb X(L/K)$ is a field under the
addition
\begin{equation*}
+:\mathbb X(L/K)\times\mathbb X(L/K)\rightarrow\mathbb X(L/K),
\end{equation*}
defined by eq. (\ref{addition}), and under the multiplication
\begin{equation*}
\times :\mathbb X(L/K)\times\mathbb X(L/K)\rightarrow\mathbb X(L/K)
\end{equation*} 
defined naturally from the componentwise multiplication defined on 
$\mathbb X(L/K)^\times$. 
This field $\mathbb X(L/K)$ is called the 
{\it field of norms corresponding to the $APF$-extension $L/K$}.  
\end{theorem}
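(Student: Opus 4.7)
The multiplicative structure is essentially automatic: by construction, $\mathbb X(L/K)^\times = \varprojlim_M M^\times$ is already an abelian group under componentwise multiplication, which extends to $\mathbb X(L/K)$ by the convention $0 \cdot x = 0$; and if $x = (\alpha_M) \in \mathbb X(L/K)^\times$, the sequence $(\alpha_M^{-1})_M$ is tautologically norm-compatible, so every nonzero element is a unit. The content of the theorem is therefore the verification that the addition (\ref{addition}) turns $\mathbb X(L/K)$ into a commutative ring. My plan is to reduce each ring axiom, through continuity of the norm maps, to the corresponding identity inside a single local field $M \in S_{L/K}$, exploiting the approximation result underlying (\ref{addition}).

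The easy steps go first. I would begin by checking that the element $(\gamma_M)$ produced by (\ref{addition}) actually lies in $\mathbb X(L/K)$, i.e.\ that $N_{M''/M}(\gamma_{M''}) = \gamma_M$ whenever $M \subseteq M''$; this is immediate from continuity of $N_{M''/M}$ and the transitivity $N_{M'/M} = N_{M''/M} \circ N_{M'/M''}$ applied to the defining sequence. Commutativity of $+$ is inherited from each $M$, and the symbol $0$ serves as additive identity by fiat. For the additive inverse of $x = (\alpha_M)$, I would define $(-x)_M := \lim_{M'} N_{M'/M}(-\alpha_{M'})$; the same convergence argument that justifies (\ref{addition}) guarantees the limit exists, and then $x + (-x) = 0$ because $N_{M'/M}(\alpha_{M'} + (-\alpha_{M'})) = 0$ at every finite level.

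The main obstacle lies in associativity and distributivity, both of which require interchanging iterated limits of the shape (\ref{addition}): expanding $((x+y)+z)_M$ produces a double limit in $M'$ and $M''$ that must be reconciled with the single limit defining $(x + (y+z))_M$. The essential tool here is the approximation statement of Theorem 2.1.3 of \cite{wintenberger-1983} which already underwrites (\ref{addition}): for every $M \in S_{L/K}$ and every integer $n \geq 1$ there exists $M_0 \supseteq M$ in $S_{L/K}$ such that $N_{M'/M}(\alpha_{M'} + \beta_{M'}) \equiv \gamma_M \pmod{\mathfrak p_M^n}$ for all $M' \supseteq M_0$, with analogous approximations for sums of three summands and for expressions of the form $N_{M'/M}(\alpha_{M'}(\beta_{M'} + \gamma_{M'}))$. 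Granted this uniform control, both $((x+y)+z)_M = (x+(y+z))_M$ and $(x(y+z))_M = (xy + xz)_M$ can be verified by choosing $M'$ deep enough for all relevant approximations to hold modulo $\mathfrak p_M^n$ and then letting $n \to \infty$. The genuinely hard part is coordinating these approximations simultaneously across all three arguments and both operations; this is precisely where the $APF$-hypothesis, together with the control over $\varphi_{L/K}$ provided by Lemma \ref{apftower}, enters the proof in an essential way.
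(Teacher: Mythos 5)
The paper itself offers no proof of this theorem: it is quoted from Fontaine--Wintenberger, and the only justification given is the remark preceding it, which refers the well-definedness of the addition (\ref{addition}) to Theorem 2.1.3 of \cite{wintenberger-1983}. So your sketch should be measured against that standard argument rather than against anything in the text, and in outline it is the standard argument: reduce the ring axioms to the uniform approximation property of the norm maps along the tower, with multiplication and multiplicative inverses being componentwise and trivial. Two of your reductions can even be simplified. Distributivity requires no interchange of limits at all: since $N_{M'/M}(\alpha_{M'}(\beta_{M'}+\gamma_{M'}))=N_{M'/M}(\alpha_{M'})\,N_{M'/M}(\beta_{M'}+\gamma_{M'})=\alpha_M\,N_{M'/M}(\beta_{M'}+\gamma_{M'})$, the $M$-component of $x(y+z)$ is literally $\alpha_M$ times the limit defining $(y+z)_M$, which is the $M$-component of $xy+xz$. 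On the other hand, your verification of additive inverses is not correct as written: the $M'$-component of your candidate $-x$ is $\lim_{M''}N_{M''/M'}(-\alpha_{M''})$, not $-\alpha_{M'}$, so the identity ``$N_{M'/M}(\alpha_{M'}+(-\alpha_{M'}))=0$ at every finite level'' does not directly compute $x+(-x)$; you need the same approximation argument you invoke for associativity (the components of $-x$ agree with $-\alpha_{M'}$ modulo high powers of the maximal ideal, and norms of elements of large valuation have large valuation).

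The genuine gap is the one you flag but do not close, and the tool you point to would not close it. The entire content of the theorem is the estimate that, along the $APF$ tower above the tamely ramified part, the norm maps are nearly additive --- the deviation $N_{M'/M}(x+y)-N_{M'/M}(x)-N_{M'/M}(y)$ lies in a power of $\mathfrak p_M$ that tends to infinity as $M'$ grows --- uniformly enough to justify the convergence of (\ref{addition}) and the limit interchanges in associativity. This is a ramification-theoretic statement about the behaviour of $N_{M'/M}$ on higher unit groups and ideals, governed by the functions $\psi_{M'/M}$ and the $APF$ hypothesis; it is exactly Theorem 2.1.3 of \cite{wintenberger-1983} (and the lemmas feeding into it), which both you and the paper cite rather than prove. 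Lemma \ref{apftower}, by contrast, only asserts the stability of the $APF$ property in towers and provides no quantitative control over norms or over $\varphi_{L/K}$, so invoking it as the place ``where the $APF$-hypothesis enters'' misidentifies the essential input. In short: your reduction of the field axioms to the approximation lemma is sound (modulo the inverse-element slip), but the approximation lemma itself remains an external citation, which leaves your proposal at the same level of self-containedness as the paper's own treatment.
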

Now, in particular, choose the following specific increasing
directed-family of sub-extensions $\{L_i\}_{0\leq i\in\mathbb Z}$ in
$L/K$ :
\begin{itemize}
\item[(i)] $L_0$ is the maximal unramified extension of $K$ inside $L$;
\item[(ii)] $L_1$ is the maximal tamely ramified extension of $K$ 
inside $L$;
\item[(iii)] choose $L_i$, for $i\geq 2$, inductively as a finite
extension of $L_1$ inside $L$ with $L_i\subseteq L_{i+1}$ and 
$\bigcup\limits_{0\leq i\in\mathbb Z}L_i=L$.
\end{itemize}
Note that, $L_0/K$ is a finite sub-extension of $L/K$ and
by the definition of tamely ramified extensions, $L_0\subseteq L_1$,
with $[L_1:K]<\infty$.
Thus, for any element $(\alpha_{L_i})_{0\leq i\in\mathbb Z}$ of
$\mathbb X(L/K)$,
\begin{equation}
\label{dv}
\nu_{L_i}(\alpha_{L_i})=\nu_{L_0}(\alpha_{L_0}),
\end{equation}
for every $0\leq i\in\mathbb Z$. Thus, the mapping
\begin{equation*} 
\pmb{\nu}_{\mathbb X(L/K)}:\mathbb X(L/K)\rightarrow\mathbb Z\cup\{\infty\}
\end{equation*}
given by
\begin{equation}
\label{valuationdef}
\pmb{\nu}_{\mathbb X(L/K)}\left((\alpha_{L_i})_{0\leq
i\in\mathbb Z}\right)=\nu_{L_0}(\alpha_{L_0}),
\end{equation}
for $(\alpha_{L_i})_{0\leq i\in\mathbb Z}\in\mathbb X(L/K)$, is
well-defined, 
and moreover a discrete valuation on $\mathbb X(L/K)$, in view 
of eq. (\ref{dv}).
\begin{theorem}[Fontaine-Wintenberger]
\label{completefield}
Let $L/K$ be an $APF$-extension, and let $\mathbb X(L/K)$ be the field of
norms attached to $L/K$. Then,
\begin{itemize}
\item[(i)] the field $\mathbb X(L/K)$ is complete with respect to the 
discrete valuation 
$\pmb{\nu}_{\mathbb X(L/K)}:\mathbb X(L/K)\rightarrow\mathbb
Z\cup\{\infty\}$ defined by eq. (\ref{valuationdef});
\item[(ii)] the residue class field $\kappa_{\mathbb X(L/K)}$ of 
$\mathbb X(L/K)$ satisfies 
$\kappa_{\mathbb X(L/K)}\xrightarrow{\sim}\kappa_L$;
\item[(iii)] the characteristic of the field $\mathbb X(L/K)$ is equal to 
$\text{char}(\kappa_K)$.
\end{itemize}
\end{theorem}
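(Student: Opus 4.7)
My plan is to establish the three assertions in the order (iii), (ii), (i), since (iii) is a short warm-up that will also make it clear that the ring structure on $\mathbb{X}(L/K)$ is compatible with what one expects at the residue-field level.

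For (iii), the multiplicative identity of $\mathbb{X}(L/K)$ is the constant norm-compatible system $1=(1)_{L_i}$. Using the finite-sum extension of eq.~\eqref{addition} (a short induction from Wintenberger's Theorem 2.1.3 cited above the statement), the $L_0$-component of $p\cdot 1$ equals
\begin{equation*}
(p\cdot 1)_{L_0}=\lim_{\substack{M'\in S_{L/K},\,M'\supseteq L_0\\ [M':L_0]\to\infty}}
N_{M'/L_0}(p)=\lim_{M'}p^{[M':L_0]},
\end{equation*}
since $p\in L_0$. Since $L/K$ is infinite and $[L_0:K]<\infty$, the degree $[M':L_0]$ is unbounded in the net, so $\nu_{L_0}(p^{[M':L_0]})=[M':L_0]\,\nu_{L_0}(p)\to\infty$ (the case $\mathrm{char}(K)=p$ being immediate because then $p=0$ in $L_0$ already). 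Thus $(p\cdot 1)_{L_0}=0$, forcing $p\cdot 1=0$ in the field $\mathbb{X}(L/K)$, i.e.\ $\mathrm{char}(\mathbb{X}(L/K))=p=\mathrm{char}(\kappa_K)$.

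For (ii), I would exploit two structural features of the chosen tower: $\kappa_{L_j}=\kappa_L$ for every $j\geq 0$ (since $L_0$ is the maximal unramified sub-extension of $L/K$), and $[L_{j+1}:L_j]$ is a power of $p$ for every $j\geq 1$ (since $L_1$ is the maximal tamely ramified sub-extension, so $L/L_1$ is totally wildly ramified). Define $\rho:O_{\mathbb{X}(L/K)}\to\kappa_L$ by $\rho(0)=0$ and $\rho(\alpha)=\overline{\alpha_{L_1}}$ for $\alpha\in\mathbb{X}(L/K)^\times$. Multiplicativity is immediate from componentwise multiplication, and $\ker(\rho)=\mathfrak{p}_{\mathbb{X}(L/K)}$ by the very definition of $\pmb{\nu}_{\mathbb{X}(L/K)}$. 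For additivity, fix $M'\supseteq L_1$; then $M'/L_1$ is totally ramified of degree $p^{a_{M'}}$ with trivial residue extension, so $\overline{N_{M'/L_1}(x)}=\bar{x}^{p^{a_{M'}}}$ for $x\in O_{M'}$, and the Frobenius identity in characteristic $p$ together with norm-compatibility yields
\begin{equation*}
\overline{N_{M'/L_1}(\alpha_{M'}+\beta_{M'})}
=(\overline{\alpha_{M'}}+\overline{\beta_{M'}})^{p^{a_{M'}}}
=\overline{\alpha_{M'}}^{p^{a_{M'}}}+\overline{\beta_{M'}}^{p^{a_{M'}}}
=\overline{\alpha_{L_1}}+\overline{\beta_{L_1}},
\end{equation*}
and this is preserved in the $M'\to\infty$ limit defining $(\alpha+\beta)_{L_1}$. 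For surjectivity, given $\bar{a}\in\kappa_L^\times$ set $\bar{b}_j:=\bar{a}^{1/[L_j:L_1]}\in\kappa_L^\times$, which is unique because $\gcd([L_j:L_1],|\kappa_L^\times|)=1$; the Teichm\"uller lifts $[\bar{b}_j]\in L_1\subseteq L_j$ satisfy $N_{L_j/L_{j-1}}([\bar{b}_j])=[\bar{b}_j]^{[L_j:L_{j-1}]}=[\bar{b}_{j-1}]$ for $j\geq 2$, and adjoining $\alpha_{L_0}:=[\bar{a}]^{[L_1:L_0]}$ produces a norm-compatible system with $\rho$-image $\bar{a}$.

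For (i), my plan is to lift a Cauchy sequence $(\alpha^{(n)})_n\subseteq\mathbb{X}(L/K)$ to component-wise limits in each complete field $L_j$. The main obstacle is that $\pmb{\nu}_{\mathbb{X}(L/K)}(\alpha^{(n)}-\alpha^{(m)})=\nu_{L_j}\bigl((\alpha^{(n)}-\alpha^{(m)})_{L_j}\bigr)$ measures the valuation of the \emph{field-of-norms} difference $(\alpha^{(n)}-\alpha^{(m)})_{L_j}$, not of the componentwise difference $\alpha^{(n)}_{L_j}-\alpha^{(m)}_{L_j}$ inside $L_j$; these are \emph{a priori} distinct elements of $L_j$. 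To bridge them I would again invoke Wintenberger's approximation theorem (Theorem 2.1.3 of \cite{wintenberger-1983}, the same result legitimizing eq.~\eqref{addition}): for each fixed $j$ and each prescribed precision, one finds a sufficiently deep $M'\supseteq L_j$ at which $N_{M'/L_j}(\alpha^{(n)}_{M'}-\alpha^{(m)}_{M'})$ approximates $(\alpha^{(n)}-\alpha^{(m)})_{L_j}$ to that precision, and the speed of convergence is controlled uniformly in $n,m$. This forces each component sequence $(\alpha^{(n)}_{L_j})_n$ to be Cauchy in the complete field $L_j$, with a limit $\alpha_{L_j}\in L_j$; continuity of $N_{L_{j+1}/L_j}$ then makes $(\alpha_{L_j})_j$ norm-compatible, giving $\alpha\in\mathbb{X}(L/K)$, and one final application of the approximation yields $\pmb{\nu}_{\mathbb{X}(L/K)}(\alpha^{(n)}-\alpha)\to\infty$. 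Alternatively, once (ii) and (iii) are in hand, one may use a compatible uniformizer together with a Teichm\"uller section $\kappa_L\hookrightarrow O_{\mathbb{X}(L/K)}$ to identify $\mathbb{X}(L/K)$ with the Laurent series field $\kappa_L(\!(T)\!)$, whose completeness is standard.
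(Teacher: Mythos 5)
Your parts (iii) and (ii) are correct and essentially complete, and they do genuinely more than the paper, whose entire proof of this theorem is the citation of Th\'eor\`eme 2.1.3 of Wintenberger: granting only what the paper itself grants (the existence of the limit in eq.~(\ref{addition})), your computation of $(p\cdot 1)_{L_0}$, and your residue map $\rho(\alpha)=\overline{\alpha_{L_1}}$ --- using that every $M'\in S_{L/K}$ containing $L_1$ is totally ramified over $L_1$ of $p$-power degree, so that $\overline{N_{M'/L_1}(x)}=\bar{x}^{p^{a_{M'}}}$ and Frobenius gives additivity, together with the Teichm\"uller construction for surjectivity --- are sound.

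Part (i), however, has a genuine gap at the one step you actually assert: ``this forces each component sequence $(\alpha^{(n)}_{L_j})_n$ to be Cauchy.'' What your bridge gives is control of $N_{M'/L_j}\bigl(\alpha^{(n)}_{M'}-\alpha^{(m)}_{M'}\bigr)$, and since $\nu_{L_j}\circ N_{M'/L_j}=\nu_{M'}$ on a totally ramified tower, this is exactly the statement that the \emph{componentwise difference at the deep level $M'$} is small; it says nothing yet about $\alpha^{(n)}_{L_j}-\alpha^{(m)}_{L_j}=N_{M'/L_j}(\alpha^{(n)}_{M'})-N_{M'/L_j}(\alpha^{(m)}_{M'})$. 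To descend you must compare $N_{M'/L_j}(a)-N_{M'/L_j}(b)$ with $N_{M'/L_j}(a-b)$, and the additivity defect of $N_{M'/L_j}$ at the \emph{fixed} bottom level $j$ does not improve by deepening $M'$ (already $N_{M'/L_j}(2\pi_{M'})-N_{M'/L_j}(\pi_{M'})$ versus $N_{M'/L_j}(\pi_{M'})$ shows a defect bounded independently of $M'$). What saves the argument is a finer, valuation-sensitive estimate: if $a\equiv b \pmod{\mathfrak p_{M'}^{c}}$ then $N_{M'/L_j}(a)\equiv N_{M'/L_j}(b)$ modulo an ideal whose exponent is at least $\min\bigl(c,\ \text{(trace/different term for }M'/L_j)\bigr)$, combined with the fact that for an infinite $APF$ extension the quantity $\nu\bigl(\mathfrak d(M'/L_j)\bigr)/e(M'/L_j)$ tends to infinity along the tower. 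Neither of these appears in your sketch, and the blanket appeal to Wintenberger's Th\'eor\`eme 2.1.3 cannot supply them without circularity, since that theorem \emph{is} (in particular) the completeness statement being proved; what one may legitimately import are the ramification-theoretic lemmas behind it. The same objection applies to your fallback: identifying $\mathbb X(L/K)$ with $\kappa_L((T))$ via a Teichm\"uller section and a compatible uniformizer already requires summing the series $\sum[\bar a_i]\Pi^i$ in $\mathbb X(L/K)$, i.e.\ completeness (or an equivalent inverse-limit description of $O_{\mathbb X(L/K)}$), so it cannot substitute for (i). A repaired route along your lines would be: use exact norm-compatibility to write $\alpha^{(n)}_{L_j}-\alpha^{(m)}_{L_j}$ as a difference of norms from a deep $M'$, use the additivity defect \emph{at level $M'$} (large because $M'$ is deep, uniformly in the elements) to see $\nu_{M'}\bigl(\alpha^{(n)}_{M'}-\alpha^{(m)}_{M'}\bigr)\geq C$, and then apply the congruence-preservation estimate above; this also handles the minor point that the $M'$-component of $\alpha^{(n)}-\alpha^{(m)}$ is only congruent to, not equal to, $\alpha^{(n)}_{M'}-\alpha^{(m)}_{M'}$ (and fixes the $p=2$ wrinkle with $-1$).
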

\begin{proof}
For a proof, look at Theorem 2.1.3 of \cite{wintenberger-1983}.
\end{proof}
\begin{remark}
The ring of integers $O_{{\mathbb X}(L/K)}$ of the local field
(complete discrete valuation field) $\mathbb X(L/K)$ is defined
as usual by
\begin{equation*}
O_{{\mathbb X}(L/K)}=\left\{(\alpha_{L_i})_{0\leq i\in\mathbb Z}
\in\mathbb X(L/K): 
\pmb{\nu}_{\mathbb X(L/K)}
\left((\alpha_{L_i})_{0\leq i\in\mathbb Z}\right)\geq 0\right\}.
\end{equation*}
Thus, by eq.s (\ref{valuationdef}) and (\ref{dv}), for 
$\alpha=(\alpha_{L_i})_{0\leq i\in\mathbb Z}\in {\mathbb X}(L/K)$,
the following two conditions are equivalent.
\begin{itemize}
\item[(i)]
$(\alpha_{L_i})_{0\leq i\in\mathbb Z}\in O_{\mathbb X(L/K)}$; 
\item[(ii)]
$\alpha_{L_i}\in O_{L_i}$ for every
$0\leq i\in\mathbb Z$.
\end{itemize}
The maximal ideal $\mathfrak p_{\mathbb X(L/K)}$ of $O_{\mathbb
 X(L/K)}$ is defined by
\begin{equation*}
\mathfrak p_{\mathbb X(L/K)}=
\left\{(\alpha_{L_i})_{0\leq i\in\mathbb Z}\in\mathbb X(L/K): 
\pmb{\nu}_{\mathbb X(L/K)}
\left((\alpha_{L_i})_{0\leq i\in\mathbb Z}\right)\gneq 0\right\}.
\end{equation*}
By eq.s (\ref{valuationdef}) and (\ref{dv}), for
$\alpha=(\alpha_{L_i})_{0\leq i\in\mathbb Z}\in {\mathbb X}(L/K)$,
the following two conditions are equivalent.
\begin{itemize}
\item[(iii)]
$(\alpha_{L_i})_{0\leq i\in\mathbb Z}\in\mathfrak p_{\mathbb X(L/K)}$;
\item[(iv)]
$\alpha_{L_i}\in\mathfrak p_{L_i}$ for every
$0\leq i\in\mathbb Z$.
\end{itemize}
The unit group $U_{\mathbb X(L/K)}$ of $O_{\mathbb X(L/K)}$ is defined
by
\begin{equation*}
U_{\mathbb X(L/K)}=
\left\{(\alpha_{L_i})_{0\leq i\in\mathbb Z}\in\mathbb X(L/K): 
\pmb{\nu}_{\mathbb X(L/K)}
\left((\alpha_{L_i})_{0\leq i\in\mathbb Z}\right)= 0\right\}.
\end{equation*}
Again by eq.s (\ref{valuationdef}) and (\ref{dv}), for
$\alpha=(\alpha_{L_i})_{0\leq i\in\mathbb Z}\in {\mathbb X}(L/K)$,
the following two conditions are equivalent.
\begin{itemize}
\item[(v)]
$(\alpha_{L_i})_{0\leq i\in\mathbb Z}\in U_{\mathbb X(L/K)}$;
\item[(vi)]
$\alpha_{L_i}\in U_{L_i}$ for every $0\leq i\in\mathbb Z$.
\end{itemize}
\end{remark}
Let $L/K$ be an infinite $APF$-extension. Consider the following tower
\begin{equation*}
K\subseteq F\subseteq L\subseteq E\subseteq K^{sep}
\end{equation*}
of extensions over $K$, where $[F:K]<\infty$ and $[E:L]<\infty$. 
It then follows, by Lemma \ref{apftower} parts (i) and (ii), that 
$L/F$ is an infinite $APF$-extension satisfying
\begin{equation*}
\mathbb X(L/K)=\mathbb X(L/F),
\end{equation*}
by the definition of field of norms, and $E/K$ is an infinite 
$APF$-extension satisfying
\begin{equation*}
\mathbb X(L/K)\hookrightarrow\mathbb X(E/K)
\end{equation*}
under the injective topological homomorphism
\begin{equation*}
\varepsilon_{L,E}^{(M)} : \mathbb X(L/K)\rightarrow\mathbb X(E/K),
\end{equation*}
which depends on a finite extension $M$ over $K$ satisfying $LM=E$.
\begin{equation*}
\xymatrix{
{} & {LM=E}\\
L\ar@{-}[ur] & {}\\
{} & M\ar@{-}[uu]\\
K\ar@{-}[ur]_-{[M:K]<\infty}\ar@{-}[uu]^-{\txt{infinite\\APF-ext.}} & {}
}
\end{equation*}
The topological embedding $\varepsilon_{L,E}^{(M)}:\mathbb
X(L/K)\hookrightarrow\mathbb X(E/K)$ is defined as follows. 
Let $\{L_i\}_{0\leq i\in\mathbb Z}$ be an increasing directed-family
of sub-extensions in $L/K$, such that $[L_i:K]<\infty$, for every 
$0\leq i\in\mathbb Z$, and $\bigcup_{0\leq i\in\mathbb Z}L_i=L$. Then,
clearly, $\{L_iM\}_{0\leq i\in\mathbb Z}$ is an increasing
directed-family of sub-extensions in $E/K$, such that
$[L_iM:K]<\infty$, for every $0\leq i\in\mathbb Z$, and 
$\bigcup_{0\leq i\in\mathbb Z}L_iM=E$. Given these two
directed-families, there exists a large
enough positive integer $m=m(M)$, which depends on the choice of $M$, 
such that, for $m\leq i\leq j$,
\begin{equation*}
N_{L_jM/L_iM}(x)=N_{L_j/L_i}(x),
\end{equation*} 
for each $x\in L_j$.
Now, the topological embedding 
$\varepsilon_{L,E}^{(M)} : \mathbb X(L/K)\hookrightarrow\mathbb X(E/K)$ is
defined,
for every $(\alpha_{L_i})_{0\leq i\in\mathbb Z}
\in\mathbb X(L/K)-\{0\}$, by
\begin{equation*}
\varepsilon_{L,E}^{(M)} : (\alpha_{L_i})_{0\leq i\in\mathbb Z}\mapsto 
(\alpha'_{L_iM})_{0\leq i\in\mathbb Z},
\end{equation*}
where $\alpha'_{L_iM}\in L_iM$ is defined, for every 
$0\leq i\in\mathbb Z$, by
\begin{equation*}
\alpha'_{L_iM}=
\begin{cases}
\alpha_{L_i},& i\geq m \\
N_{L_mM/L_iM}(\alpha_{L_m}), & i<m .
\end{cases}
\end{equation*}
Thus, under the topological embedding 
$\varepsilon_{L,E}^{(M)}:\mathbb X(L/K)\hookrightarrow\mathbb X(E/K)$,
view $\mathbb X(E/K)/\mathbb X(L/K)$ as an extension of complete
discrete valuation fields. 
At this point, the following remark is in order.
\begin{remark}
Let $L/K$ be an infinite $APF$-extension and $E/L$ a finite extension.
Suppose that $M$ and $M'$ are two finite extensions over $K$, satisfying
$LM=LM'=E$. Then the embeddings 
$\varepsilon_{L,E}^{(M)}, \varepsilon_{L,E}^{(M')} :
\mathbb X(L/K)\hookrightarrow\mathbb X(E/K)$ are the same. Therefore,
as a notation, we shall set $\varepsilon_{L,E}^{(M)}=\varepsilon_{L,E}$.
\end{remark}
Now, given an infinite $APF$-extension $L/K$, and this time let
$E$ be a (not necessarily finite) separable extension of $L$. Let
$S_{E/L}^{sep}$ denote the partially-ordered family of all finite separable 
sub-extensions in $E/L$. Then,
\begin{proposition}
\begin{equation*}
\left\{\mathbb X(E'/K);
\varepsilon_{E',E''}:\mathbb X(E'/K)\hookrightarrow\mathbb
X(E''/K)\right\}_{\substack{E',E''\in S_{E/L}^{sep}\\ E'\subseteq E''}}
\end{equation*}
is an inductive system under the topological embeddings
\begin{equation*}
\varepsilon_{E',E''}:\mathbb X(E'/K)\hookrightarrow
\mathbb X(E''/K)
\end{equation*} 
for $E',E''\in S_{E/L}^{sep}$ with $E'\subseteq E''$. 
\end{proposition}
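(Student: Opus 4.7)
The plan is to verify the two defining axioms of an inductive system of fields with embeddings: (a) each object $\mathbb{X}(E'/K)$ is well-defined; (b) each transition map $\varepsilon_{E',E''}$ is a well-defined topological embedding with $\varepsilon_{E',E'} = \mathrm{id}$; and (c) the composition law $\varepsilon_{E'',E'''} \circ \varepsilon_{E',E''} = \varepsilon_{E',E'''}$ holds whenever $E' \subseteq E'' \subseteq E'''$ in $S_{E/L}^{sep}$. First I would check (a)--(b): since $L/K$ is infinite APF and $[E':L]<\infty$, Lemma \ref{apftower}(ii) shows $E'/K$ is an infinite APF-extension, so Theorem \ref{completefield} guarantees $\mathbb{X}(E'/K)$. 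For $E' \subseteq E''$ in $S_{E/L}^{sep}$, writing $E''=E'(\theta)$ and setting $M=K(\theta)$ produces a finite extension of $K$ with $E'M=E''$, so $\varepsilon_{E',E''}^{(M)}$ is defined, and by the remark preceding the Proposition is independent of $M$; taking $M=K$ gives $\varepsilon_{E',E'}=\mathrm{id}$ directly from the formula.

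The main content lies in transitivity. Choose finite extensions $M_1, M_2 / K$ with $E'M_1 = E''$ and $E''M_2 = E'''$; then $M := M_1 M_2$ is finite over $K$ with $E'M = E''M_2 = E'''$. Fix an increasing directed family $\{F_i\}_{i\ge 0}$ of finite sub-extensions of $E'/K$ whose union is $E'$. The corresponding families in $E''/K$ and $E'''/K$ are then $\{F_i M_1\}_i$ and $\{F_i M\}_i$ respectively. By the norm-stabilization property invoked in the construction of $\varepsilon_{L,E}^{(\cdot)}$, there is a single threshold $m \in \mathbb{Z}_{\ge 0}$ large enough that, for all $m \le i \le j$,
\begin{equation*}
N_{F_j M_1/F_i M_1}(x) = N_{F_j/F_i}(x), \qquad
N_{F_j M/F_i M}(y) = N_{F_j M_1/F_i M_1}(y), \qquad
N_{F_j M/F_i M}(x) = N_{F_j/F_i}(x),
\end{equation*}
for $x\in F_j$ and $y \in F_j M_1$ (one takes $m$ to be the maximum of the three individual stabilization indices produced by the APF-hypothesis applied to $E'/K$ paired with $M_1$, $M$, and to $E''/K$ paired with $M_2$).

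Given $\alpha=(\alpha_{F_i}) \in \mathbb{X}(E'/K)^\times$, I then apply $\varepsilon_{E',E''}^{(M_1)}$ and subsequently $\varepsilon_{E'',E'''}^{(M_2)}$, and compare coordinate-wise with $\varepsilon_{E',E'''}^{(M)}(\alpha)$. For $i\ge m$ both sequences have $i$-th coordinate equal to $\alpha_{F_i}$, and for $i<m$ both equal $N_{F_m M/F_i M}(\alpha_{F_m})$; since both maps also send $0$ to $0$, they agree on all of $\mathbb{X}(E'/K)$. The continuity of the composed embedding is automatic from that of its factors. The main obstacle is the bookkeeping needed to produce a single threshold $m$ simultaneously witnessing all three norm-stabilization identities above; once that is arranged the verification is a direct coordinate-wise unwinding of the piecewise formula defining $\varepsilon_{L,E}^{(M)}$.
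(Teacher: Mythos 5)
Your proposal is correct; note that the paper itself states this Proposition without any proof, so there is no argument of the authors to compare against, and your verification (existence of a suitable finite $M$ via a primitive element, $\varepsilon_{E',E'}=\mathrm{id}$ by taking $M=K$, and transitivity by taking $M=M_1M_2$ with a common stabilization threshold $m$) is exactly the natural argument that fills this gap. One small streamlining: your claim that for $i<m$ both composites have $i$-th coordinate $N_{F_mM/F_iM}(\alpha_{F_m})$ needs either a short case analysis on where $i$ sits relative to $m_1,m_2$ or, more cleanly, the observation that both $\bigl(\varepsilon_{E'',E'''}^{(M_2)}\circ\varepsilon_{E',E''}^{(M_1)}\bigr)(\alpha)$ and $\varepsilon_{E',E'''}^{(M)}(\alpha)$ are norm-compatible sequences in $\varprojlim_i (F_iM)^\times$, so agreement in all coordinates $i\geq m$ forces agreement in the lower coordinates, since these are obtained by applying $N_{F_mM/F_iM}$ to the $m$-th coordinate. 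You might also record explicitly the (trivial) fact that $S_{E/L}^{sep}$ is directed, i.e. $E'E''\in S_{E/L}^{sep}$ for $E',E''\in S_{E/L}^{sep}$, which is part of what makes the inductive limit in the subsequent definition of $\mathbb X(E,L/K)$ well behaved.
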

Let $\mathbb X(E,L/K)$ denote the topological field defined by
the inductive limit 
\begin{equation*}
\mathbb X(E,L/K)=\varinjlim\limits_{E'\in S_{E/L}^{sep}}\mathbb X(E'/K)
\end{equation*}
defined over the
transition morphisms $\varepsilon_{E',E''}:\mathbb X(E'/K)\hookrightarrow
\mathbb X(E''/K)$ for $E',E''\in S_{E/L}^{sep}$ with $E'\subseteq E''$.

The following theorem is central in the theory of fields of norms.
\begin{theorem}[Fontaine-Wintenberger]
Let $L/K$ be an $APF$-extension and $E/L$ a Galois extension. Then
$\mathbb X(E,L/K)/\mathbb X(L/K)$ is a Galois extension, and
\begin{equation*}
\text{Gal}\left(\mathbb X(E,L/K)/\mathbb X(L/K)\right)
\simeq\text{Gal}(E/L)
\end{equation*}
canonically.
\end{theorem}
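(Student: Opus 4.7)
The plan is to first reduce to the case where $E/L$ is finite Galois, then to establish a canonical isomorphism $\text{Gal}(E'/L) \xrightarrow{\sim} \text{Gal}(\mathbb X(E'/K)/\mathbb X(L/K))$ at each finite Galois stage, and finally to pass to the inductive limit. The reduction is justified because the finite Galois sub-extensions of $E/L$ form a cofinal subsystem of $S_{E/L}^{sep}$ (the normal closure over $L$ of any finite separable $E' \subseteq E$ lies again in $E$, since $E/L$ is Galois), and the functor ``Galois group'' converts direct limits of Galois extensions into inverse limits; consequently, compatible finite-stage isomorphisms yield the global statement $\text{Gal}(\mathbb X(E,L/K)/\mathbb X(L/K)) \simeq \text{Gal}(E/L)$ in the limit.

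For finite Galois $E'/L$ with $G' := \text{Gal}(E'/L)$, I would first construct the $G'$-action on $\mathbb X(E'/K)$. Write $E' = LM$ for some finite $M/K$, and form the compositum $\widetilde M := \prod_{\tau \in G'} \tau(M)$ inside $K^{sep}$; then $\widetilde M/K$ is finite, $L\widetilde M = E'$, and each $\sigma \in G'$ satisfies $\sigma(\widetilde M) = \widetilde M$ because $\sigma$ merely permutes the finitely many factors $\tau(M)$. For any directed family $\{L_i\}$ of finite sub-extensions of $L/K$ exhausting $L$, the family $\{E'_i := L_i \widetilde M\}$ is then directed, cofinal among finite sub-extensions of $E'/K$, and each $E'_i$ is $G'$-stable. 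Defining $\sigma \cdot (\alpha_{E'_i})_i := (\sigma(\alpha_{E'_i}))_i$ gives a well-defined continuous field automorphism of $\mathbb X(E'/K)$, since $\sigma$ commutes with the norms $N_{E'_j/E'_i}$ whenever both fields are $\sigma$-stable; moreover, this action fixes the image $\varepsilon_{L,E'}(\mathbb X(L/K))$ pointwise, because such an element has components eventually in $L_i \subseteq L$.

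To promote this action into the desired Galois isomorphism, I combine two facts: (a) the degree identity $[\mathbb X(E'/K):\mathbb X(L/K)] = [E':L]$, and (b) faithfulness of $G' \to \text{Aut}(\mathbb X(E'/K)/\mathbb X(L/K))$. For (a), the residue identifications $\kappa_{\mathbb X(E'/K)} \simeq \kappa_{E'}$ and $\kappa_{\mathbb X(L/K)} \simeq \kappa_L$ from Theorem \ref{completefield} give the residue degree $f = f(E'/L)$, while comparing valuations of compatible prime systems $(\pi_{E'_i}) \in \mathbb X(E'/K)^\times$ and $(\pi_{L_i}) \in \mathbb X(L/K)^\times$ yields the ramification index $e = e(E'_i/L_i) = e(E'/L)$ for all sufficiently large $i$. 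For (b), if $\sigma \in G'$ acts trivially on some prime system $(\pi_{E'_i})$, then $\sigma(\pi_{E'_i}) = \pi_{E'_i}$ for every $i$, and since $\bigcup_i L(\pi_{E'_i}) = E'$ in the limit this forces $\sigma = 1$. The main obstacle is (a), which is the core structural input of Fontaine-Wintenberger (Theorem 3.2.2 of \cite{wintenberger-1983}) and uses the $APF$ hypothesis essentially to ensure the relevant norm towers stabilize. Once (a) and (b) are secured, the chain $|G'| \leq |\text{Aut}(\mathbb X(E'/K)/\mathbb X(L/K))| \leq [\mathbb X(E'/K):\mathbb X(L/K)] = [E':L] = |G'|$ forces equality throughout, so $\mathbb X(E'/K)/\mathbb X(L/K)$ is Galois with group canonically $G'$. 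Compatibility of these isomorphisms under restriction is immediate from the construction, and passage to the inverse limit over finite Galois $E'/L$ completes the proof.
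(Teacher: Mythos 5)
Note first that the paper does not prove this statement at all: it is quoted as a theorem of Fontaine--Wintenberger, the proof living in \cite{wintenberger-1983}, so your proposal can only be compared with the standard argument there. Your architecture is indeed that argument's skeleton: reduce to finite Galois stages $E'/L$ by cofinality, build the $\text{Gal}(E'/L)$-action on $\mathbb X(E'/K)$ through a $\sigma$-stable cofinal family $E_i'=L_i\widetilde M$, close the counting chain $|G'|\le|\text{Aut}(\mathbb X(E'/K)/\mathbb X(L/K))|\le[\mathbb X(E'/K):\mathbb X(L/K)]=[E':L]=|G'|$, and pass to the limit. The construction of the action, its triviality on $\varepsilon_{L,E'}(\mathbb X(L/K))$, and the limit formalism are fine. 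But two steps do not hold up as written. The faithfulness argument (b) is incorrect: fixing a norm-compatible system of primes does not force $\sigma=1$, because $\bigcup_i L(\pi_{E_i'})$ need not equal $E'$. Indeed $E/L$ is an arbitrary Galois extension here, so $E'/L$ may be unramified; then for all large $i$ the extensions $E_i'/L_i$ are unramified of constant degree with $\text{Gal}(E_j'/E_i')\xrightarrow{\sim}\text{Gal}(L_j/L_i)$ by restriction, hence a norm-compatible system of primes of the $L$-tower is also one for the $E'$-tower, and it is fixed by every $\sigma\in\text{Gal}(E'/L)$ while generating only $L$. To repair this you must use the full hypothesis that $\sigma$ is trivial on $\mathbb X(E'/K)$: then $\sigma$ is also trivial on the residue field $\kappa_{\mathbb X(E'/K)}\simeq\kappa_{E'}$, so for large $i$ the automorphism $\sigma\mid_{E_i'}$ fixes a prime element of $E_i'$ and induces the identity on $\kappa_{E_i'}$; its fixed field $F$ then has $e(E_i'/F)=f(E_i'/F)=1$, forcing $\sigma\mid_{E_i'}=\mathrm{id}$ and hence $\sigma=1$.

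The second problem is that your step (a), the degree identity $[\mathbb X(E'/K):\mathbb X(L/K)]=[E':L]$, is not proved but cited from \cite{wintenberger-1983}; yet this identity (together with functoriality) \emph{is} the mathematical content of the theorem, so as it stands the proposal assumes the hard part. Your sketch of (a) also skips the points that actually require work: (i) the stabilizations $[E_i':L_i]=[E':L]$, $e(E_i'/L_i)=e(E'/L)$, $f(E_i'/L_i)=f(E'/L)$ for large $i$ (elementary, but needed); (ii) the compatibility of the residue identifications $\kappa_{\mathbb X(E'/K)}\simeq\kappa_{E'}$ and $\kappa_{\mathbb X(L/K)}\simeq\kappa_L$ with the embedding $\varepsilon_{L,E'}$, which is what turns them into the statement $f=f(E'/L)$ for the extension of fields of norms; (iii) the computation that a prime of $\mathbb X(L/K)$ has valuation $e(E'/L)$ in $\mathbb X(E'/K)$, giving $e=e(E'/L)$; and (iv) an argument that the extension of fields of norms is finite of degree exactly $ef$ --- this last point is where completeness and the finiteness of the residue fields enter, via the standard successive-approximation argument for complete discrete valuation fields, and it is not automatic from knowing $e$ and $f$ separately. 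Only after (i)--(iv) does your counting chain close; with them, and with the corrected faithfulness step, the outline does become a genuine proof along Wintenberger's lines.
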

An immediate and important consequence of this theorem is the following.
\begin{corollary}
Let $L/K$ be an $APF$-extension. Then
\begin{equation*}
\text{Gal}(\mathbb X(L^{sep},L/K)/\mathbb X(L/K))
\simeq\text{Gal}(L^{sep}/L)
\end{equation*}
canonically.
\end{corollary}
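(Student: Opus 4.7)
The plan is to invoke the preceding Fontaine--Wintenberger theorem as a black box, specialized to the choice $E=L^{sep}$. The key observation is that $L^{sep}/L$ is itself a Galois extension: a separable closure of any field is always both separable and normal over that field, hence a (typically infinite) Galois extension. Thus the hypotheses of the theorem are satisfied, since $L/K$ is $APF$ by assumption and $E/L = L^{sep}/L$ is Galois.

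Applying the theorem then immediately yields that $\mathbb X(L^{sep},L/K)/\mathbb X(L/K)$ is a Galois extension together with a canonical isomorphism
\[
\text{Gal}(\mathbb X(L^{sep},L/K)/\mathbb X(L/K)) \xrightarrow{\sim} \text{Gal}(L^{sep}/L),
\]
which is exactly the content of the corollary. No further argument is required beyond verifying that the specialization is legitimate.

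The only point one might conceivably worry about is that $L^{sep}/L$ is infinite, so one should check that the construction of $\mathbb X(E,L/K)$ as the inductive limit $\varinjlim_{E'\in S_{E/L}^{sep}} \mathbb X(E'/K)$ and the Galois statement in the preceding theorem both go through without a finiteness assumption on $E/L$. This is indeed built into the setup: the indexing set $S_{E/L}^{sep}$ consists of \emph{finite} separable sub-extensions of $E/L$, so the colimit makes sense for any separable $E/L$, and the theorem as stated imposes no finiteness condition on $E$. Hence there is no genuine obstacle, and the corollary is purely a formal consequence.
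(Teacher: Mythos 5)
Your proposal is correct and is exactly the paper's route: the paper presents this corollary as an immediate consequence of the preceding Fontaine--Wintenberger theorem, obtained by taking $E=L^{sep}$ and noting that $L^{sep}/L$ is Galois. Your additional remark that the inductive-limit construction over the finite separable sub-extensions in $S_{E/L}^{sep}$ requires no finiteness of $E/L$ is a sensible sanity check and matches the setup in the text.
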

\section{Fesenko reciprocity law}
In this section, we shall review the Fesenko reciprocity law
for the local field $K$ following \cite{fesenko-2000, fesenko-2001, 
fesenko-2005}.

Following \cite{koch-deshalit}, we recall the following definition.
\begin{definition}
Let $\varphi=\varphi_K\in\text{Gal}(K^{nr}/K)$ denote the Frobenius 
automorphism of $K$. An automorphism $\xi\in\text{Gal}(K^{sep}/K)$ is called a 
\textit{Lubin-Tate splitting over $K$},
if $\xi\mid_{K^{nr}}=\varphi$.
\end{definition}
All through the remainder of the text, we shall fix a Lubin-Tate
splitting over the local field $K$ and denote it simply by $\varphi$, or by
$\varphi_K$ if there is fear of confusion.
Let $K_\varphi$ denote the fixed field $(K^{sep})^{\varphi}$ 
of $\varphi\in G_K$ in $K^{sep}$. 

Let $L/K$ be a totally-ramified $APF$-Galois extension satisfying 
\begin{equation}
\label{phitower}
K\subseteq L\subseteq K_\varphi. 
\end{equation}
The field of norms $\mathbb X(L/K)$ is a local field by
virtue of Theorem \ref{completefield}.
Let $\widetilde{\mathbb X}(L/K)$ denote the completion 
$\widetilde{\mathbb X(L/K)}$ of 
$\mathbb X(L/K)^{nr}$ with respect to the valuation
$\pmb{\nu}_{\mathbb X(L/K)^{nr}}$, which 
is the unique extension of the valuation 
$\pmb{\nu}_{\mathbb X(L/K)}$ to $\mathbb X(L/K)^{nr}$.
As usual, we let $U_{\widetilde{\mathbb X}(L/K)}$ to denote the unit
group of the ring of integers $O_{\widetilde{\mathbb X}(L/K)}$ 
of the complete field $\widetilde{\mathbb X}(L/K)$. 
In this case, there exist isomorphisms 
\begin{equation*}
\widetilde{\mathbb X}(L/K)\simeq\mathbb F_p^{sep}((T))
\end{equation*}
and 
\begin{equation*}
U_{\widetilde{\mathbb X}(L/K)}\simeq\mathbb F_p^{sep}[[T]]^\times
\end{equation*}
defined by the mechanism of Coleman power series (for details, q.v. section 1.4
in \cite{koch-deshalit}).
Thus, the algebraic structures $\widetilde{\mathbb X}(L/K)$ and 
$U_{\widetilde{\mathbb X}(L/K)}$ initially seems to depend on the ground 
field $K$ only. However, as we shall state in Corollary \ref{grouplaw1},
the law of composition on the ``class formation'', which is a certain 
sub-quotient of $U_{\widetilde{\mathbb X}(L/K)}$, does indeed depend on 
the $Gal(L/K)$-module structure of $U_{\widetilde{\mathbb X}(L/K)}$. 
\begin{remark}
The problem of removing this dependence on the Galois-module structure of
$U_{\widetilde{\mathbb X}(L/K)}$ is closely connected with Sen's 
infinite-dimensional Hodge-Tate theory (\cite{ikeda-serbest}), or more 
generally with the $p$-adic Langlands program.
\end{remark}
As in section 1, let $\widetilde{K}$ denote the completion of $K^{nr}$
with respect to the valuation $\nu_{K^{nr}}$ on $K^{nr}$, and let
$\widetilde{L}=L\widetilde{K}$. Then $\widetilde{L}/\widetilde{K}$ is
an $APF$-extension, as $L/K$ is an $APF$-extension, and the
corresponding field of norms satisfy
\begin{equation}
\label{identify}
\mathbb X(\widetilde{L}/\widetilde{K})=\widetilde{\mathbb X}(L/K).
\end{equation}
Now, let
\begin{equation}
\label{projection-to-K}
\text{Pr}_{\widetilde K}:U_{\widetilde{\mathbb X}(L/K)}\rightarrow
U_{\widetilde K}
\end{equation}
denote the projection map on the $\widetilde{K}$-coordinate of 
$U_{\widetilde{\mathbb X}(L/K)}$ under the identification described in
eq. (\ref{identify}).
All through the text, $U_{\widetilde{\mathbb X}(L/K)}^1$ stands for
the kernel $\text{ker}(\text{Pr}_{\widetilde K})$ of 
the projection map $\text{Pr}_{\widetilde K}:U_{\widetilde{\mathbb X}(L/K)}
\rightarrow U_{\widetilde K}$. 

\begin{definition}
The subgroup 
\begin{equation*}
\text{Pr}_{\widetilde K}^{-1}(U_K)=\{U\in 
U_{\widetilde{\mathbb X}{(L/K)}}:\text{Pr}_{\widetilde K}(U)\in U_K\} 
\end{equation*}
of $U_{\widetilde{\mathbb X}{(L/K)}}$ is called the 
\textit{Fesenko diamond subgroup of $U_{\widetilde{\mathbb X}{(L/K)}}$}, 
and denoted by $U_{\widetilde{\mathbb X}{(L/K)}}^\diamond$. 
\end{definition}
Now, following \cite{fesenko-2000, fesenko-2001, fesenko-2005}, choose an 
ascending chain of field extensions
\begin{equation*}
K=E_o\subset E_1\subset\dots\subset E_i\subset\dots\subset L
\end{equation*} 
in such a way that 
\begin{itemize}
\item[(i)]
$L=\bigcup_{0\leq i\in\mathbb Z}E_i$ ;
\item[(ii)]
$E_i/K$ is a Galois extension for each $0\leq i\in\mathbb Z$; 
\item[(iii)]
$E_{i+1}/E_i$ is cyclic of prime degree $[E_{i+1}:E_i]=p
=\text{char}(\kappa_K)$ for each $1\leq i\in\mathbb Z$;
\item[(iv)]
$E_1/E_o$ is cyclic of degree relatively prime to $p$. 
\end{itemize}
Such a sequence $(E_i)_{0\leq i\in\mathbb Z}$
exists, as $L/K$ is a solvable Galois extension, and will be called as
a \textit{basic ascending chain of sub-extensions in $L/K$}.
Then, we can
construct $\mathbb X(L/K)$ by the basic sequence 
$(E_i)_{0\leq i\in\mathbb Z}$ and
$\widetilde{\mathbb X}{(L/K)}$ by $(\widetilde{E_i})_{0\leq
i\in\mathbb Z}$. Note that, the Galois group $\text{Gal}(L/K)$
corresponding to the extension $L/K$ act continuously on 
$\mathbb X(L/K)$ and on
$\widetilde{\mathbb X}{(L/K)}$ naturally by defining the
Galois-action of $\sigma\in\text{Gal}(L/K)$ on the chain
\begin{equation}
\label{galoismodule-definition}
K=E_o\subset E_1\subset\dots\subset E_i\subset
\dots\subset L,
\end{equation}
by the action of $\sigma$ on each $E_i$ for $0\leq i\in\mathbb Z$ as
\begin{equation}
\label{galoismodule-action}
K=E_o^\sigma\subset E_1^\sigma=E_1\subset\dots\subset E_i^\sigma=E_i\subset
\dots\subset L,
\end{equation}
and respectively on the chain
\begin{equation}
\label{galoismodule-definition-2}
\widetilde{K}=\widetilde{K}E_o\subset\widetilde{E}_1=\widetilde{K}E_1\subset
\dots\subset\widetilde{E}_i=\widetilde{K}E_i\subset\dots\subset
\widetilde{L}=\widetilde{K}L,
\end{equation}
by the action of $\sigma$ on the ``$E_i$-part'' of each $\widetilde{E}_i$
(note that, $E_i\cap K^{nr}=K$) 
\begin{equation*}
\begin{matrix}
\xymatrix{
{} & {\widetilde{K}E_i=\widetilde{E}_i}\\
{\widetilde{K}}\ar@{-}[ur] & {}\\
{} & {E_i}\ar@{-}[uu]\\
K\ar@{-}[ur]_-{\txt{$E_i/K$ Galois ext. \\ $[E_i:K]<\infty$}}
\ar@{-}[uu]^-{\txt{completion of\\ max.-ur.-ext. of K}} & {}
} &
\xymatrix{ {} \\ \rightsquigarrow \\ \rightsquigarrow \\
  \rightsquigarrow \\ {} } &
\xymatrix{
{} & {\widetilde{K}E_i^{\sigma}=\widetilde{E_i^{\sigma}}=\widetilde{E}_i}\\
{\widetilde{K}}\ar@{-}[ur] & {}\\
{} & {E_i^\sigma = E_i}\ar@{-}[uu]\\
K\ar@{-}[ur]_
-{\txt{$E_i^\sigma/K$ Galois ext. \\ $[E_i^{\sigma}:K]<\infty$}}
\ar@{-}[uu]^-{\txt{completion of \\ max.-ur.-ext. of K}} & {}
}
\end{matrix}
\end{equation*}
for $0\leq i\in\mathbb Z$ as
\begin{equation}
\label{galoismodule-action-2}
\widetilde{K}=\widetilde{K}E_o^\sigma\subset
\widetilde{K}E_1^\sigma=\widetilde{K}E_1\subset\dots\subset 
\widetilde{K}E_i^\sigma=\widetilde{K}E_i\subset
\dots\subset\widetilde{K}L.
\end{equation}
Therefore, there exist natural
continuous actions of $\text{Gal}(L/K)$ on
$U_{\mathbb X(L/K)}$, $U_{\widetilde{\mathbb X}{(L/K)}}$, and on
$U_{\widetilde{\mathbb X}{(L/K)}}^\diamond$ compatible with the
respective topological group structures, so that \textit{we shall always 
view them as topological $\text{Gal}(L/K)$-modules in this text}.
Now, recall the following theorem regarding norm compatible sequences
of prime elements (cf. \cite{koch-deshalit}).
\begin{theorem}[Koch-de Shalit]
Assume that $K\subseteq L\subset K_\varphi$. Then for any chain
\begin{equation*}
K=E_o\subset E_1\subset\dots\subset E_i\subset\dots\subset L,
\end{equation*}
of finite sub-extensions of $L/K$,
there exists a unique norm-compatible sequence 
\begin{equation*}
\pi_{E_o}, \pi_{E_1},\cdots,\pi_{E_i},\cdots , 
\end{equation*}
where each $\pi_{E_i}$ is a prime element of $E_i$,
for $0\leq i\in\mathbb Z$.
\end{theorem}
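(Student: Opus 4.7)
The plan is to establish existence through a Lubin-Tate construction tailored to the splitting $\varphi$ and to extract uniqueness from the rigidity that the hypothesis $L \subseteq K_\varphi$ imposes. First I would record the basic geometric reduction: since $\varphi\mid_{K^{nr}}$ is the arithmetic Frobenius of $K$, its fixed field intersects $K^{nr}$ in $K$ itself, so $L \subseteq K_\varphi$ forces $L \cap K^{nr} = K$ and hence each sub-extension $E_i/K$ is totally ramified. Consequently, the relative norm $N_{E_{i+1}/E_i}$ sends any prime element of $E_{i+1}$ to a prime element of $E_i$ (they carry the same normalized valuation), so the condition of being norm-compatible is well-posed, and the set of primes $P(E_i) = \pi_{E_i} U_{E_i}$ is a principal homogeneous space under the compact group $U_{E_i}$.

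For existence, I would fix a prime $\pi_K$ of $K$ and invoke the Lubin-Tate formal group $\mathcal{F}_{\pi_K,\varphi}$ associated with the pair $(\pi_K,\varphi)$: a coherent system of torsion generators $t_n$ with $[\pi_K]_{\mathcal{F}}(t_{n+1}) = t_n$ produces a norm-compatible sequence of primes in the Lubin-Tate tower sitting inside $K_\varphi$. Because $K_\varphi$ is generated by such Lubin-Tate extensions relative to $\varphi$, the chain $(E_i)_i$ embeds into a sufficiently large Lubin-Tate tower, and projecting the canonical sequence from the tower to $(E_i)_i$ by partial norms yields the desired $(\pi_{E_i})$. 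As a softer alternative for pure existence, one may also appeal to the fact that the inverse system of nonempty compact sets $P(E_i)$ under the continuous norm maps has nonempty inverse limit.

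For uniqueness, I would use the identification $\mathbb{X}(\widetilde{L}/\widetilde{K}) = \widetilde{\mathbb{X}}(L/K) \simeq \mathbb{F}_p^{sep}((T))$ provided by the Coleman power series mechanism. Two norm-compatible prime sequences $(\pi_{E_i})$ and $(\pi'_{E_i})$ differ by a norm-compatible sequence of units, defining an element $u \in U_{\mathbb{X}(L/K)} \subseteq U_{\widetilde{\mathbb{X}}(L/K)}$. On $\widetilde{\mathbb{X}}(L/K)$ the Lubin-Tate splitting $\varphi$ acts as the residual Frobenius of $\mathbb{F}_p^{sep}((T))$, and the sub-$\varphi$-fixed uniformizers are precisely those produced by the Lubin-Tate construction. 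Comparing $u$ with the Coleman power series attached to the two candidate sequences and using that both must satisfy the same $\varphi$-equivariance forces $u = 1$, giving uniqueness.

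The main obstacle is exactly this last step: translating the Lubin-Tate rigidity into a concrete vanishing statement for $\varphi$-invariant norm-compatible unit sequences. This requires a careful use of the Coleman power series formalism, control over which units lie in $U_{\mathbb{X}(L/K)}$ versus merely in $U_{\widetilde{\mathbb{X}}(L/K)}$, and the identification of the $\varphi$-fixed sub-uniformizers inside $\mathbb{F}_p^{sep}((T))^\times$. This is the technical heart of the Koch--de Shalit argument, and I expect the rest of the proof to be essentially organizational once this rigidity has been pinned down.
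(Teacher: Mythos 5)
A preliminary remark on scope: the paper itself gives no proof of this statement — it is recalled from Koch--de Shalit — so your proposal can only be measured against the standard argument in that reference, and against it your sketch has two genuine problems, one in the existence step and a fatal one in the uniqueness step.

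On existence: your reduction is fine as far as it goes ($E_i\cap K^{nr}=K$ because $\varphi$ restricts to the Frobenius on $K^{nr}$, so each $E_i/K$ is totally ramified and norms carry primes to primes), and the compactness argument does produce \emph{some} norm-compatible sequence. But the Lubin--Tate step is wrong as stated: finite subextensions of $K_\varphi/K$ are totally ramified but in general \emph{non-abelian} over $K$ (this is the whole point of the paper), so the chain $(E_i)$ does not embed into any Lubin--Tate tower over $K$, and you cannot obtain the $\pi_{E_i}$ by pushing down a coherent sequence of torsion points. The Koch--de Shalit construction is layer-wise: since $E_i/K$ is totally ramified, $\varphi$ is again a Lubin--Tate splitting over $E_i$, and $\pi_{E_i}$ is defined as the unique prime of $E_i$ with $\alpha_{E_i}(\pi_{E_i})=\varphi\mid_{E_i^{ab}}$ (equivalently, the prime whose associated Lubin--Tate extension of $E_i$ is fixed by $\varphi$); norm-compatibility is then the functoriality $\alpha_{E_i}\circ N_{E_j/E_i}=\mathrm{res}_{E_i^{ab}}\circ\,\alpha_{E_j}$.

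On uniqueness, your strategy does not merely leave a technical gap — it rests on a vacuous constraint. Two norm-compatible prime sequences for the chain differ by an element $u$ of $U_{\mathbb X(L/K)}$, and this is the full unit group of the local field $\mathbb X(L/K)$, hence very far from trivial. The constraint you propose, ``$\varphi$-equivariance,'' distinguishes nothing here: because $L\subseteq K_\varphi$, every coordinate of every element of $\mathbb X(L/K)$ is fixed by $\varphi$, so \emph{all} norm-compatible prime sequences of the chain, and all elements of $U_{\mathbb X(L/K)}$, are equally $\varphi$-invariant inside $\widetilde{\mathbb X}(L/K)$; nothing in your comparison of Coleman power series forces $u=1$. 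The rigidity in Koch--de Shalit comes instead from the pointwise class-field-theoretic characterization of each $\pi_{E_i}$ recalled above — equivalently from coherence over \emph{all} finite subextensions of $K_\varphi/K$ (in particular the Lubin--Tate towers over each $E_i$, whose universal norm groups cut the admissible primes down to exactly one) — and the uniqueness asserted in the theorem is uniqueness of that canonical labelling restricted to the chain. Since you never formulate this characterization, the ``technical heart'' you defer is in fact the entire content of the theorem.
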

In view of the theorem of Koch and de Shalit, define the 
\textit{natural} prime element
$\Pi_{\varphi;L/K}$ of the local field $\mathbb X(L/K)$, 
which depends on the fixed Lubin-Tate splitting $\varphi$ 
(cf. \cite{koch-deshalit}) as well as
the sub-extension $L/K$ of $K_\varphi/K$, by
\begin{equation*}
\Pi_{\varphi;L/K}=(\pi_{E_i})_{0\leq i\in\mathbb Z}.
\end{equation*}
Note that, by the theorem of Koch and de Shalit, the prime element
$\Pi_{\varphi;L/K}$ of $\mathbb X(L/K)$ does \textit{not} depend on the 
choice of a chain $(E_i)_{0\leq i\in\mathbb Z}$ of finite 
sub-extensions of $L/K$. 
\begin{theorem}[Fesenko]
\label{fundamental-equation}
For each $\sigma\in\text{Gal}(L/K)$, there exists 
$U_\sigma\in U_{\widetilde{\mathbb X}{(L/K)}}^\diamond$ which solves 
the equation
\begin{equation}
U^{1-\varphi}=\Pi_{\varphi;L/K}^{\sigma-1}
\end{equation}
for $U$. Moreover, the solution set of this equation consists of
elements from the coset $U_\sigma .U_{\mathbb X(L/K)}$ of $U_\sigma$
modulo $U_{\mathbb X(L/K)}$. 
\end{theorem}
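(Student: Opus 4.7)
The plan is to solve $U^{1-\varphi}=\Pi_{\varphi;L/K}^{\sigma-1}$ in two stages: first I locate the right-hand side inside $U^1_{\widetilde{\mathbb X}(L/K)}=\ker(\text{Pr}_{\widetilde K})$, then I exploit surjectivity of $1-\varphi$ on units of a complete discretely-valued field with separably closed residue field. Both the diamond-membership assertion and the coset description will then drop out formally from the projection $\text{Pr}_{\widetilde K}$ and the computation of the $\varphi$-fixed subfield of $\widetilde{\mathbb X}(L/K)$.

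For the first stage, I use the identification $\widetilde{\mathbb X}(L/K)=\mathbb X(\widetilde L/\widetilde K)$ and write $\Pi_{\varphi;L/K}=(\pi_{E_i})_{i\geq 0}$, so that its $\widetilde K$-coordinate is $\pi_{E_0}=\pi_K$. Both $\sigma(\Pi_{\varphi;L/K})$ and $\Pi_{\varphi;L/K}$ have $\pmb{\nu}_{\mathbb X(L/K)}$-value $1$, so their ratio is a unit; and since $\sigma\in\text{Gal}(L/K)$ fixes $K$, the $\widetilde K$-coordinate of $\sigma(\Pi_{\varphi;L/K})/\Pi_{\varphi;L/K}$ is $\sigma(\pi_K)/\pi_K=1$. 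Hence $\Pi_{\varphi;L/K}^{\sigma-1}\in U^1_{\widetilde{\mathbb X}(L/K)}$. I would also record here that $\varphi$ fixes $\Pi_{\varphi;L/K}$ component-by-component, since $L\subseteq K_\varphi$, so that $\varphi$ commutes with $\sigma$ on the relevant expressions.

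For the second stage, I pass through the Coleman power series isomorphism $\widetilde{\mathbb X}(L/K)\simeq \mathbb F_p^{sep}((T))$, under which $\varphi$ acts as the $q$-power Frobenius on the residue field $\mathbb F_p^{sep}$ and fixes a prime element coming from $\mathbb X(L/K)$. Decomposing a unit as a Teichm\"uller lift times a principal unit, I would treat each piece separately: on the Teichm\"uller factor, $1-\varphi$ is the map $a\mapsto a^{1-q}$, which is surjective because $\mathbb F_p^{sep}=\overline{\mathbb F_p}$ is algebraically closed; on principal units, a filtration argument at each graded quotient $\mathfrak m^n/\mathfrak m^{n+1}\simeq\mathbb F_p^{sep}$ reduces surjectivity of $1-\varphi$ to solvability of an Artin-Schreier equation $y^q-y=c$ over the separably closed residue field, and completeness of $\widetilde{\mathbb X}(L/K)$ assembles the successive approximations into an honest preimage $U_\sigma$.

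Finally, $U_\sigma$ lies in $U_{\widetilde{\mathbb X}(L/K)}^\diamond$ for free: applying $\text{Pr}_{\widetilde K}$ to $U_\sigma^{1-\varphi}=\Pi_{\varphi;L/K}^{\sigma-1}$ and using step one gives $\text{Pr}_{\widetilde K}(U_\sigma)^{1-\varphi}=1$, which forces $\text{Pr}_{\widetilde K}(U_\sigma)\in \widetilde K^{\varphi=1}\cap U_{\widetilde K}=U_K$. For the coset description, the ratio of two solutions lies in the kernel of $1-\varphi$ on $U_{\widetilde{\mathbb X}(L/K)}$; since the fixed field of $\varphi$ on $\widetilde{\mathbb X}(L/K)$ coincides with $\mathbb X(L/K)$ (the extension $\widetilde{\mathbb X}(L/K)/\mathbb X(L/K)$ being the field-of-norms avatar of $\widetilde K/K$), this kernel is precisely $U_{\mathbb X(L/K)}$, whence the solution set is $U_\sigma\cdot U_{\mathbb X(L/K)}$. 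The main obstacle is the surjectivity step for $1-\varphi$ on principal units: one must check that the Artin-Schreier obstructions vanish at every level of the $\mathfrak m$-adic filtration and that $\varphi$ shifts that filtration in a controlled way, both of which ultimately rest on $\mathbb F_p^{sep}$ being separably closed.
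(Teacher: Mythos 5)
Your proposal is correct, and in fact the paper itself gives no proof of this theorem: it defers to the reference of Ikeda--Ikeda (\emph{Two lemmas on formal power series}), whose content is precisely your stage-two claim that $1-\varphi$ is surjective on $U_{\widetilde{\mathbb X}(L/K)}\simeq\mathbb F_p^{sep}[[T]]^\times$ (Teichm\"uller factor handled by $a\mapsto a^{1-q}$ on the algebraically closed residue field, principal units by Artin--Schreier solvability at each level of the $T$-adic filtration plus completeness), so your route agrees with the intended one. The remaining steps you give are also sound: the $\widetilde K$-coordinate of $\Pi_{\varphi;L/K}^{\sigma-1}$ is $\sigma(\pi_K)/\pi_K=1$, so the right-hand side lies in $U^1_{\widetilde{\mathbb X}(L/K)}$; applying $\mathrm{Pr}_{\widetilde K}$ and using $\widetilde K^{\varphi=1}=K$ gives $U_\sigma\in U^\diamond_{\widetilde{\mathbb X}(L/K)}$; and the componentwise identity $\widetilde E_i^{\varphi=1}=E_i$ gives $\widetilde{\mathbb X}(L/K)^{\varphi=1}=\mathbb X(L/K)$, hence $\ker(1-\varphi)\cap U_{\widetilde{\mathbb X}(L/K)}=U_{\mathbb X(L/K)}$, which is exactly the asserted coset description.
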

In fact, for the most general form of this theorem and its proof,
look at \cite{ikeda-ikeda}. 

Now, define the arrow
\begin{equation}
\phi_{L/K}^{(\varphi)}:\text{Gal}(L/K)\rightarrow 
U_{\widetilde{\mathbb X}{(L/K)}}^\diamond/U_{\mathbb X(L/K)}
\end{equation}
by
\begin{equation}
\phi_{L/K}^{(\varphi)}:\sigma\mapsto
\overline{U}_\sigma=U_\sigma.U_{\mathbb X(L/K)},
\end{equation}
for every $\sigma\in\text{Gal}(L/K)$.
\begin{theorem}[Fesenko]
\label{cocycle1}
The arrow
\begin{equation*}
\phi_{L/K}^{(\varphi)}:\text{Gal}(L/K)\rightarrow 
U_{\widetilde{\mathbb X}{(L/K)}}^\diamond/U_{\mathbb X(L/K)}
\end{equation*}
defined for the extension $L/K$
is injective, and for every $\sigma,\tau\in\text{Gal}(L/K)$, 
\begin{equation}
\phi_{L/K}^{(\varphi)}(\sigma\tau)=\phi_{L/K}^{(\varphi)}(\sigma)
\phi_{L/K}^{(\varphi)}(\tau)^\sigma
\end{equation}
co-cycle condition is satisfied.
\end{theorem}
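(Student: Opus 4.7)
The plan is to deduce both statements directly from the fundamental equation $U^{1-\varphi}=\Pi_{\varphi;L/K}^{\sigma-1}$ of Theorem \ref{fundamental-equation}, using two structural facts. First, the action of $\sigma\in\text{Gal}(L/K)$ on $\widetilde{\mathbb X}(L/K) = \mathbb X(\widetilde L/\widetilde K)$, as defined through eq. (\ref{galoismodule-action-2}), is supported on the ``$L$-part'' of the basic chain and fixes the completed maximal unramified part $\widetilde K$; on the other hand, $\varphi$ acts as the Frobenius on $\widetilde K$ and, because $L\subseteq K_\varphi$, acts as the identity on $L$ and hence on $\mathbb X(L/K)\subset\widetilde{\mathbb X}(L/K)$. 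Consequently the lifts of $\sigma$ and $\varphi$ to $\text{Gal}(\widetilde L/\widetilde K)$ commute, so $[\sigma,\varphi]=1$ on $U_{\widetilde{\mathbb X}(L/K)}$; the diamond subgroup $U_{\widetilde{\mathbb X}(L/K)}^\diamond$ is $\text{Gal}(L/K)$-stable (since $\mathrm{Pr}_{\widetilde K}$ is $\sigma$-equivariant, $\sigma$ fixing $\widetilde K$); and $U_{\mathbb X(L/K)}$ is both $\varphi$-fixed and $\text{Gal}(L/K)$-stable. In particular, the coset operation $\overline U\mapsto\overline U^{\sigma}$ is well-defined on $U_{\widetilde{\mathbb X}(L/K)}^\diamond/U_{\mathbb X(L/K)}$.

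For the cocycle identity, I would pick solutions $U_\sigma,U_\tau$ of the fundamental equation for $\sigma$ and $\tau$ as in Theorem \ref{fundamental-equation} and compute
\begin{equation*}
(U_\sigma\,U_\tau^\sigma)^{1-\varphi}
= U_\sigma^{1-\varphi}\cdot\bigl(U_\tau^{1-\varphi}\bigr)^\sigma
= \Pi_{\varphi;L/K}^{\sigma-1}\cdot\bigl(\Pi_{\varphi;L/K}^{\tau-1}\bigr)^\sigma
= \Pi_{\varphi;L/K}^{(\sigma-1)+(\sigma\tau-\sigma)}
= \Pi_{\varphi;L/K}^{\sigma\tau-1},
\end{equation*}
where the second equality uses the commutation $[\sigma,\varphi]=1$ from the preceding paragraph. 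Thus $U_\sigma\,U_\tau^\sigma\in U_{\widetilde{\mathbb X}(L/K)}^\diamond$ is a solution of the fundamental equation attached to $\sigma\tau$, and the uniqueness-up-to-$U_{\mathbb X(L/K)}$ clause of Theorem \ref{fundamental-equation} gives $U_\sigma\,U_\tau^\sigma\equiv U_{\sigma\tau}\pmod{U_{\mathbb X(L/K)}}$, which is exactly the asserted identity $\phi_{L/K}^{(\varphi)}(\sigma\tau)=\phi_{L/K}^{(\varphi)}(\sigma)\phi_{L/K}^{(\varphi)}(\tau)^\sigma$.

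For injectivity, suppose $\phi_{L/K}^{(\varphi)}(\sigma)=\overline 1$, so one can choose a representative $U_\sigma\in U_{\mathbb X(L/K)}$. Since $\varphi$ fixes $\mathbb X(L/K)$ pointwise, $U_\sigma^{1-\varphi}=1$, whence the fundamental equation yields $\Pi_{\varphi;L/K}^{\sigma-1}=1$, i.e.\ $\sigma(\Pi_{\varphi;L/K})=\Pi_{\varphi;L/K}$. Unpacking the componentwise definition $\Pi_{\varphi;L/K}=(\pi_{E_i})_{i\geq 0}$ along a basic ascending chain $(E_i)_{i\geq 0}$ of $L/K$, this says $\sigma(\pi_{E_i})=\pi_{E_i}$ for every $i$. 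Since $L/K$, and hence each $E_i/K$, is totally ramified, $E_i=K(\pi_{E_i})$; combined with $\sigma|_K=\mathrm{id}$, this forces $\sigma|_{E_i}=\mathrm{id}$ for every $i$, so $\sigma=\mathrm{id}_L$.

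The main delicacy in the argument is the commutation $[\sigma,\varphi]=1$ on $U_{\widetilde{\mathbb X}(L/K)}$; once this is extracted from the explicit construction of the $\text{Gal}(L/K)$-action on the field of norms in eq.s (\ref{galoismodule-definition-2})--(\ref{galoismodule-action-2}) together with $L\subseteq K_\varphi$, the cocycle identity reduces to a one-line manipulation and injectivity to the elementary fact that prime elements generate totally ramified extensions.
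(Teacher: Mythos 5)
Your strategy is sound, and in fact there is no in-paper proof to compare against: Theorem \ref{cocycle1} is quoted from Fesenko's papers without proof in this text. The two pillars of your argument are both correct. The commutation of the chosen lifts of $\sigma$ and $\varphi$, i.e. $(y^{\tau})^{\varphi}=(y^{\varphi})^{\tau}$ on $U_{\widetilde{\mathbb X}(L/K)}$, is exactly what the paper itself asserts and uses later (in the proof that $Y_{L/K}$ is a $\text{Gal}(L/K)$-submodule), and it holds because $\text{Gal}(LK^{nr}/K)\simeq\text{Gal}(L/K)\times\text{Gal}(K^{nr}/K)$ (one small slip: the lift of $\varphi$ is not an element of $\text{Gal}(\widetilde{L}/\widetilde{K})$, since it acts as Frobenius on $\widetilde{K}$; it is an automorphism of $\widetilde{L}$ trivial on $L$, which is what your argument actually needs). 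Granting this, your computation $(U_\sigma U_\tau^{\sigma})^{1-\varphi}=\Pi_{\varphi;L/K}^{\sigma-1}\bigl(\Pi_{\varphi;L/K}^{\tau-1}\bigr)^{\sigma}=\Pi_{\varphi;L/K}^{\sigma\tau-1}$, together with the $\text{Gal}(L/K)$-stability of $U_{\widetilde{\mathbb X}(L/K)}^{\diamond}$ and of $U_{\mathbb X(L/K)}$ and the uniqueness-modulo-$U_{\mathbb X(L/K)}$ clause of Theorem \ref{fundamental-equation}, does yield the cocycle identity in the stated form.

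The one genuine gap is in the injectivity step: as written you only prove that the fibre of $\phi_{L/K}^{(\varphi)}$ over the trivial coset is trivial. Since $\phi_{L/K}^{(\varphi)}$ is a cocycle and not a homomorphism, a trivial fibre over $\overline{1}$ does not by itself give injectivity. The repair is immediate and uses nothing beyond your own ingredients: if $\phi_{L/K}^{(\varphi)}(\sigma)=\phi_{L/K}^{(\varphi)}(\tau)$, write $U_\sigma=U_\tau V$ with $V\in U_{\mathbb X(L/K)}$; since $\varphi$ fixes $\mathbb X(L/K)$ pointwise, $V^{1-\varphi}=1$, so applying $1-\varphi$ gives $\Pi_{\varphi;L/K}^{\sigma-1}=\Pi_{\varphi;L/K}^{\tau-1}$, i.e. $\sigma(\pi_{E_i})=\tau(\pi_{E_i})$ for all $i$, and your observation that $E_i=K(\pi_{E_i})$ for the totally ramified extension $E_i/K$ then forces $\sigma=\tau$. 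Alternatively, your trivial-fibre statement combined with the cocycle identity also suffices: from $\phi_{L/K}^{(\varphi)}(\sigma)=\phi_{L/K}^{(\varphi)}(\tau)\,\phi_{L/K}^{(\varphi)}(\tau^{-1}\sigma)^{\tau}$ and cancellation in the abelian quotient $U_{\widetilde{\mathbb X}(L/K)}^{\diamond}/U_{\mathbb X(L/K)}$ one gets $\phi_{L/K}^{(\varphi)}(\tau^{-1}\sigma)=\overline{1}$, hence $\tau^{-1}\sigma=1$. With that one sentence added, your proof is complete.
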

A natural consequence of this theorem is the following result.
Let $\text{im}(\phi_{L/K}^{(\varphi)})\subseteq 
U_{\widetilde{\mathbb X}{(L/K)}}^\diamond/U_{\mathbb X(L/K)}$ denote the
image set of the mapping $\phi_{L/K}^{(\varphi)}$.
\begin{corollary}
\label{grouplaw1}
Define a law of composition $\ast$ on
$\text{im}(\phi_{L/K}^{(\varphi)})$ by
\begin{equation}
\label{star1}
\overline{U}\ast\overline{V}=\overline{U}.
\overline{V}^{(\phi_{L/K}^{(\varphi)})^{-1}(\overline{U})}
\end{equation}
for every
$\overline{U},\overline{V}\in\text{im}(\phi_{L/K}^{(\varphi)})$. 
Then
$\text{im}(\phi_{L/K}^{(\varphi)})$ is a topological group under $\ast$, and
the map $\phi_{L/K}^{(\varphi)}$ induces an isomorphism of topological 
groups
\begin{equation}
\phi_{L/K}^{(\varphi)}:\text{Gal}(L/K)\xrightarrow{\sim}
\text{im}(\phi_{L/K}^{(\varphi)}),
\end{equation}
where the topological group structure on 
$\text{im}(\phi_{L/K}^{(\varphi)})$ is
defined with respect to the binary operation $\ast$ defined by
eq. (\ref{star1}).
\end{corollary}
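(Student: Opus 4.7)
The plan is to exploit Theorem \ref{cocycle1} to transport the group structure of $\text{Gal}(L/K)$ along the bijection $\phi_{L/K}^{(\varphi)}$ onto its image, and to verify that this transported operation is exactly the law $\ast$ defined by eq. (\ref{star1}). First I would unwind the cocycle identity: given $\overline{U}=\phi_{L/K}^{(\varphi)}(\sigma)$ and $\overline{V}=\phi_{L/K}^{(\varphi)}(\tau)$, Theorem \ref{cocycle1} gives
\begin{equation*}
\phi_{L/K}^{(\varphi)}(\sigma\tau)=\phi_{L/K}^{(\varphi)}(\sigma)\cdot\phi_{L/K}^{(\varphi)}(\tau)^{\sigma}=\overline{U}\cdot\overline{V}^{(\phi_{L/K}^{(\varphi)})^{-1}(\overline{U})}=\overline{U}\ast\overline{V}.
\end{equation*}
This single identity simultaneously shows that $\overline{U}\ast\overline{V}$ lies in $\text{im}(\phi_{L/K}^{(\varphi)})$, so that $\ast$ is an internally defined binary operation on the image, and exhibits $\phi_{L/K}^{(\varphi)}$ as a homomorphism of magmas. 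Associativity of $\ast$, the existence of a two-sided identity $\phi_{L/K}^{(\varphi)}(1_{\text{Gal}(L/K)})$, and of two-sided inverses $\phi_{L/K}^{(\varphi)}(\sigma^{-1})$ are then inherited at no cost from $\text{Gal}(L/K)$ through the injection of Theorem \ref{cocycle1}.

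For the topological statement, I would equip $\text{im}(\phi_{L/K}^{(\varphi)})$ with the subspace topology inherited from the quotient $U_{\widetilde{\mathbb X}(L/K)}^\diamond/U_{\mathbb X(L/K)}$. Continuity of $\phi_{L/K}^{(\varphi)}$ on $\text{Gal}(L/K)$ reduces to the continuity of $\sigma\mapsto\Pi_{\varphi;L/K}^{\sigma-1}$ in $U_{\widetilde{\mathbb X}(L/K)}$ combined with the fact that the assignment $\sigma\mapsto U_\sigma\bmod U_{\mathbb X(L/K)}$ furnished by Theorem \ref{fundamental-equation} depends continuously on the right-hand side of the equation $U^{1-\varphi}=\Pi_{\varphi;L/K}^{\sigma-1}$; indeed, the solution set is a single coset modulo $U_{\mathbb X(L/K)}$, so passing to the quotient erases the ambiguity and yields a continuous section. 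Since $\text{Gal}(L/K)$ is profinite, hence compact, and $U_{\widetilde{\mathbb X}(L/K)}^\diamond/U_{\mathbb X(L/K)}$ is Hausdorff, the continuous injection $\phi_{L/K}^{(\varphi)}$ is automatically a homeomorphism onto its image, so $(\phi_{L/K}^{(\varphi)})^{-1}$ is continuous as well. Joint continuity of $\ast$ then follows by reading eq. (\ref{star1}) as the composite of the continuous multiplication on $U_{\widetilde{\mathbb X}(L/K)}^\diamond/U_{\mathbb X(L/K)}$ with the continuous Galois action parametrized via the continuous inverse bijection; continuity of the inversion map $\overline{U}\mapsto \phi_{L/K}^{(\varphi)}\bigl((\phi_{L/K}^{(\varphi)})^{-1}(\overline{U})^{-1}\bigr)$ is immediate from the same three facts.

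The main obstacle is the continuity of $\phi_{L/K}^{(\varphi)}$ itself, which is the only non-formal input. Everything else---the group axioms, the fact that $\phi_{L/K}^{(\varphi)}$ is an isomorphism of abstract groups, and the automatic upgrade of a continuous injection to a homeomorphism---is either a direct consequence of the cocycle identity or a standard consequence of the compact/Hausdorff dichotomy. Once one convinces oneself that the coset $U_\sigma\cdot U_{\mathbb X(L/K)}$ varies continuously with $\sigma$, the corollary follows by pure transport of structure.
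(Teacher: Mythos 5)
Your argument is correct and follows exactly the route the paper intends: the paper gives no separate proof of this corollary, presenting it as a direct consequence of Theorem \ref{cocycle1}, and your computation $\phi_{L/K}^{(\varphi)}(\sigma\tau)=\overline{U}\cdot\overline{V}^{(\phi_{L/K}^{(\varphi)})^{-1}(\overline{U})}=\overline{U}\ast\overline{V}$, combined with injectivity, is precisely that transport-of-structure consequence. Your topological supplement (compactness of $\text{Gal}(L/K)$ plus Hausdorffness of the quotient upgrading the continuous injection to a homeomorphism onto the image) is sound and goes beyond what the paper records, with the continuity of $\phi_{L/K}^{(\varphi)}$ itself being the one ingredient that you, like the paper, ultimately take from Fesenko's explicit construction of the solutions $U_\sigma$.
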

Now, for each $0\leq i\in\mathbb R$, consider the $i^{th}$ higher unit
group $U_{\widetilde{\mathbb X}{(L/K)}}^i$ of the field 
$\widetilde{\mathbb X}{(L/K)}$, and define the group
\begin{equation}
\left(U_{\widetilde{\mathbb X}{(L/K)}}^\diamond\right)^i=
U_{\widetilde{\mathbb X}{(L/K)}}^\diamond\cap 
U_{\widetilde{\mathbb X}{(L/K)}}^i .
\end{equation}
\begin{theorem}[Fesenko ramification theorem]
\label{ramification1}
For $0\leq n\in\mathbb Z$,
let $\text{Gal}(L/K)_n$ denote the $n^{th}$ higher ramification 
subgroup of the Galois group $\text{Gal}(L/K)$ corresponding to the
$APF$-Galois sub-extension $L/K$ of $K_\varphi/K$ in the lower 
numbering. Then, there exists the inclusion
\begin{multline*}
\phi_{L/K}^{(\varphi)}
\left(\text{Gal}(L/K)_n-\text{Gal}(L/K)_{n+1}\right)\subseteq \\
\left(U_{\widetilde{\mathbb X}{(L/K)}}^\diamond\right)^nU_{\mathbb
  X(L/K)}/U_{\mathbb X(L/K)} -
\left(U_{\widetilde{\mathbb X}{(L/K)}}^\diamond\right)^{n+1}
U_{\mathbb X(L/K)}/U_{\mathbb X(L/K)}.
\end{multline*}
\end{theorem}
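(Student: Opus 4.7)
The strategy is to read off the ramification-theoretic information for $\sigma$ from the defining equation $U_\sigma^{1-\varphi}=\Pi_{\varphi;L/K}^{\sigma-1}$ of Theorem \ref{fundamental-equation} by measuring valuations on both sides in $\widetilde{\mathbb X}(L/K)$. The crucial preparation is a compatibility statement between the lower-numbering filtration on $\text{Gal}(L/K)$ and the valuation $\pmb\nu_{\mathbb X(L/K)}$: first I would prove
\[
\pmb\nu_{\mathbb X(L/K)}\bigl(\Pi_{\varphi;L/K}^{\sigma-1}-1\bigr)=n
\qquad\text{for } \sigma\in\text{Gal}(L/K)_n\setminus\text{Gal}(L/K)_{n+1}.
\]
At the level of the $E_i$-coordinate, this quantity specialises, via the definition of addition in $\mathbb X(L/K)$ together with the fact that all $E_i/L_0$ are totally ramified (so norms preserve the normalised valuation), to $\nu_{E_i}(\sigma\pi_{E_i}-\pi_{E_i})=i_{E_i/K}(\sigma|_{E_i})$. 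The target value $n+1$ for $i_{E_i/K}(\sigma|_{E_i})$ is then obtained by transferring the ramification filtration on $\text{Gal}(L/K)$ through the directed family $\{E_i\}$, using Wintenberger's Hasse-Herbrand transitivity together with the defining formula $\text{Gal}(L/K)_u=\text{Gal}(L/K)^{\varphi_{L/K}(u)}$ for the infinite APF extension.

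With Step 1 in hand, I would next analyse the operator $1-\varphi$ on the higher unit filtration of $\widetilde{\mathbb X}(L/K)$. Since $\varphi$ fixes $\mathbb X(L/K)$ pointwise and induces the $q$-power Frobenius on the residue field $\overline{\kappa_K}$, the map $1-\varphi$ preserves each $U_{\widetilde{\mathbb X}(L/K)}^{m}$, and on the successive quotient $U_{\widetilde{\mathbb X}(L/K)}^{m}/U_{\widetilde{\mathbb X}(L/K)}^{m+1}\simeq\overline{\kappa_K}$ it is identified with the additive endomorphism $c\mapsto c-c^{q}$, which is surjective with kernel the prime subfield $\kappa_K=\mathbb F_q$. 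Globally, $\ker(1-\varphi)=U_{\mathbb X(L/K)}$.

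Combining the two steps, the theorem follows from a successive-approximation construction. Setting $W:=\Pi_{\varphi;L/K}^{\sigma-1}\in U_{\widetilde{\mathbb X}(L/K)}^{n}\setminus U_{\widetilde{\mathbb X}(L/K)}^{n+1}$, one lifts the leading residue-field term of $W-1$ through the surjection $c\mapsto c-c^q$ and, inductively going up the unit filtration, constructs $U\in U_{\widetilde{\mathbb X}(L/K)}^{n}$ with $U^{1-\varphi}=W$. Since any two solutions differ by an element of $U_{\mathbb X(L/K)}\subseteq U_{\widetilde{\mathbb X}(L/K)}^{\diamond}$, this $U$ represents the coset $U_\sigma U_{\mathbb X(L/K)}$ and itself lies in $U_{\widetilde{\mathbb X}(L/K)}^{\diamond}$; hence $U_\sigma\in(U_{\widetilde{\mathbb X}(L/K)}^{\diamond})^{n}\,U_{\mathbb X(L/K)}$. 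Conversely, if $U_\sigma$ lay in $(U_{\widetilde{\mathbb X}(L/K)}^{\diamond})^{n+1}U_{\mathbb X(L/K)}$, any representative $U'\in U_{\widetilde{\mathbb X}(L/K)}^{n+1}$ would yield $W=U'^{1-\varphi}\in U_{\widetilde{\mathbb X}(L/K)}^{n+1}$ by Step 2, contradicting Step 1. The main obstacle is Step 1: translating the abstract projective-limit definition of $\text{Gal}(L/K)_n$ into the concrete valuation identity demands careful bookkeeping with Hasse-Herbrand functions, reading off the jump of $i_{E_i/K}(\sigma|_{E_i})$ as $i\to\infty$, and invoking the non-trivial Wintenberger compatibility that forces these invariants to stabilise at the expected level $n+1$ for any sufficiently large $i$; Steps 2 and 3 are then essentially routine manipulations with the graded structure of $1-\varphi$.
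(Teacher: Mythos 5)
Your outline is essentially correct, and it is worth saying up front that the paper itself does not prove Theorem \ref{ramification1}: it is quoted from Fesenko, and the only argument given later (for the reformulation in Theorem \ref{ramification2}) consists of invoking the two halves of the proof of Proposition 1 of \cite{fesenko-2001}. Your proposal reconstructs precisely that standard argument: encode the lower-numbering filtration in the exact valuation $\pmb{\nu}_{\mathbb X(L/K)}\bigl(\Pi_{\varphi;L/K}^{\sigma-1}-1\bigr)=n$, then use that $1-\varphi$ preserves the unit filtration of $\widetilde{\mathbb X}(L/K)$, is surjective on graded pieces, and has kernel $U_{\mathbb X(L/K)}$, so that a solution of $U^{1-\varphi}=\Pi_{\varphi;L/K}^{\sigma-1}$ can be built inside $U^{n}_{\widetilde{\mathbb X}(L/K)}$ by successive approximation; since by Theorem \ref{fundamental-equation} the full solution set is one coset of $U_{\mathbb X(L/K)}\subseteq U^{\diamond}_{\widetilde{\mathbb X}(L/K)}$, that solution automatically lies in $\bigl(U^{\diamond}_{\widetilde{\mathbb X}(L/K)}\bigr)^{n}$, giving the inclusion, while membership of $\overline{U}_\sigma$ in level $n+1$ would force $\Pi_{\varphi;L/K}^{\sigma-1}\in U^{n+1}_{\widetilde{\mathbb X}(L/K)}$, contradicting Step 1; both halves check out. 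Two caveats: on the degree-zero piece $U^{0}/U^{1}\simeq(\kappa^{sep})^{\times}$ the map induced by $1-\varphi$ is $c\mapsto c^{1-q}$ rather than $c\mapsto c-c^{q}$, which is what occurs in the tame case $n=0$ (the argument is unaffected); and Step 1, which you rightly single out as the crux, needs the infinite-extension forms of the ramification compatibilities — the transitivity $\varphi_{L/K}=\varphi_{E_j/K}\circ\varphi_{L/E_j}$ (Remark 3.3(ii) covers this shape), the projective-limit definition of the upper filtration, compatibility of the lower filtration with the open subgroups $\text{Gal}(L/E_j)$, and the fact that $\varphi_{L/E_j}$ is the identity on $[0,n+1]$ for all large $j$, which rests on the openness of $\text{Gal}(L/K)_{n+1}$, i.e.\ on the $APF$ hypothesis; these go beyond the finite-extension statements recalled in Section 2 but are in \cite{wintenberger-1983}. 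Finally, since $\Pi_{\varphi;L/K}^{\sigma-1}-1$ is computed with the field-of-norms addition (limits of norms), the exact value $n$ must be read off from the stabilization $\nu_{E_j}\bigl(\pi_{E_j}^{\sigma}/\pi_{E_j}-1\bigr)=n$ for all large $j$ together with the fact that norms in totally ramified towers preserve normalized valuations — your sketch says this, and it is the right way to close that step.
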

Now, let $M/K$ be a Galois sub-extension of $L/K$. Thus, there exists
the chain of field extensions
\begin{equation*}
K\subseteq M\subseteq L\subseteq K_\varphi ,
\end{equation*}
where $M$ is a totally-ramified $APF$-Galois extension over $K$ by
Lemma \ref{apftower}. Let
\begin{equation*}
\phi_{M/K}^{(\varphi)}:\text{Gal}(M/K)\rightarrow
U_{\widetilde{\mathbb X}{(M/K)}}^\diamond/U_{\mathbb X(M/K)}
\end{equation*}
be the corresponding map defined for the extension $M/K$.

Now, let 
\begin{equation*}
K=E_o\subset E_1\subset\cdots\subset E_i\subset\cdots\subset L
\end{equation*}
be an ascending chain
satisfying $L=\bigcup_{0\leq i\in\mathbb Z}E_i$ and
$[E_{i+1}:E_i]<\infty$ for every $0\leq i\in\mathbb Z$. Then
\begin{equation*}
K=E_o\cap M\subseteq E_1\cap M\subseteq\cdots\subseteq E_i\cap
M\subseteq\cdots\subset M
\end{equation*}
is an ascending chain of field extensions satisfying
$M=\bigcup_{0\leq i\in\mathbb Z}(E_i\cap M)$ and 
$[E_{i+1}\cap M:E_i\cap M]<\infty$ for every $0\leq i\in\mathbb Z$.
Thus, we can construct ${\mathbb X}(M/K)$ by the sequence $(E_i\cap
M)_{0\leq i\in\mathbb Z}$ and $\widetilde{\mathbb X}(M/K)$ by 
the sequence $(\widetilde{E_i\cap M})_{0\leq i\in\mathbb Z}$. 
Furthermore, the commutative square
\begin{equation*}
\SelectTips{cm}{}\xymatrix{
{\widetilde{E}_i^\times}\ar[d]_{\widetilde{N}_{E_i/E_i\cap M}}  & & 
{\widetilde{E}_{i'}^\times}\ar[ll]_{\widetilde{N}_{E_{i'}/E_i}}
\ar[d]^{\widetilde{N}_{E_{i'}/E_{i'}\cap M}} \\
{\widetilde{E_i\cap M}^\times} & &
{\widetilde{E_{i'}\cap M}^\times}\ar[ll]_{\widetilde{N}_{E_{i'}\cap
    M/E_i\cap M}}
}
\end{equation*}
for every pair $0\leq i,i'\in\mathbb Z$ satisfying $i\leq i'$, induces
a group homomorphism
\begin{equation}
\label{coleman-norm}
\widetilde{\mathcal N}_{L/M}=\varprojlim\limits_{0\leq i\in\mathbb Z}
\widetilde{N}_{E_i/E_i\cap M}:
\widetilde{\mathbb X}(L/K)^\times\rightarrow
\widetilde{\mathbb X}(M/K)^\times 
\end{equation}
defined by
\begin{equation}
\label{coleman-norm-def}
\widetilde{\mathcal N}_{L/M}
\left((\alpha_{\widetilde{E}_i})_{0\leq i\in\mathbb Z}\right)=
\left(\widetilde{N}_{E_i/E_i\cap
    M}(\alpha_{\widetilde{E}_i})\right)_{0\leq i\in\mathbb Z},
\end{equation}
for every $(\alpha_{\widetilde{E}_i})_{0\leq i\in\mathbb
  Z}\in\widetilde{\mathbb X}(L/K)^\times$.
\begin{remark}
\label{chain-independence}
The group homomorphism
\begin{equation*}
\widetilde{\mathcal N}_{L/M} : \widetilde{\mathbb X}(L/K)^\times
\rightarrow\widetilde{\mathbb X}(M/K)^\times
\end{equation*}
defined by eq.s (\ref{coleman-norm}) and (\ref{coleman-norm-def}) 
does \textit{not} depend on the choice of an ascending chain 
\begin{equation*}
K=E_o\subset E_1\subset\cdots\subset E_i\subset\cdots\subset L
\end{equation*}
satisfying $L=\bigcup_{0\leq i\in\mathbb Z} E_i$ and
$[E_{i+1}:E_i]<\infty$ for every $0\leq i\in\mathbb Z$.
\end{remark}
The basic properties of this group homomorphism are the following.
\begin{itemize}
\item[(i)]
If $U=(u_{\widetilde{E}_i})_{0\leq i\in\mathbb Z}
\in U_{\widetilde{\mathbb X}(L/K)}$, then
$\widetilde{\mathcal N}_{L/M}(U)\in U_{\widetilde{\mathbb X}(M/K)}$.
\begin{proof}
In fact, following the definition of the valuation 
$\nu_{\widetilde{\mathbb X}(M/K)}$ of $\widetilde{\mathbb X}(M/K)$ and
the definition of the valuation $\nu_{\widetilde{\mathbb X}(L/K)}$ of
$\widetilde{\mathbb X}(L/K)$, it follows that
\begin{equation*}
\begin{aligned}
\nu_{\widetilde{\mathbb X}(M/K)}
\left(\widetilde{\mathcal N}_{L/M}(U)\right) & =
\nu_{\widetilde{\mathbb X}(M/K)}
\left(\left(\widetilde{N}_{E_i/E_i\cap M}
(u_{\widetilde{E}_i})\right)_{0\leq i\in\mathbb Z}\right) \\
& = \nu_{\widetilde{K}}(u_{\widetilde{K}}) \\
& =0,
\end{aligned}
\end{equation*}
as 
\begin{equation*}
\nu_{\widetilde{\mathbb X}(L/K)}(U)=
\nu_{\widetilde{K}}(u_{\widetilde{K}})=0,
\end{equation*}
since $U\in U_{\widetilde{\mathbb X}(L/K)}$. 
\end{proof}
\item[(ii)]
If $U=(u_{\widetilde{E}_i})_{0\leq i\in\mathbb Z}
\in U_{\widetilde{\mathbb X}(L/K)}^\diamond$, 
then $\widetilde{\mathcal N}_{L/M}(U)
\in U_{\widetilde{\mathbb X}(M/K)}^\diamond$.
\begin{proof}
The assertion follows by observing that
$\text{Pr}_{\widetilde{K}}(U)=u_{\widetilde{K}}$ and 
$\text{Pr}_{\widetilde{K}}\left(\widetilde{\mathcal N}_{L/M}(U)\right)=
\widetilde{N}_{E_o/E_o\cap
  M}(u_{\widetilde{E}_o})=u_{\widetilde{K}}\in U_K$. 
\end{proof}
\item[(iii)]
If $U=(u_{E_i})_{0\leq i\in\mathbb Z}\in U_{{\mathbb X}(L/K)}$, then 
$\widetilde{\mathcal N}_{L/M}(U)\in U_{{\mathbb X}(M/K)}$.
\begin{proof}
The assertion follows by the definition eq. (\ref{coleman-norm-def}) of the 
homomorphism eq. (\ref{coleman-norm}) combined with the fact that 
$\widetilde{N}_{E_i/E_i\cap M}(u_{E_i})=N_{E_i/E_i\cap M}(u_{E_i})$
for every $u_{E_i}\in U_{E_i}$ and for every $0\leq i\in\mathbb Z$. 
\end{proof}
\end{itemize}
Thus, the group homomorphism eq. (\ref{coleman-norm}) 
defined by eq. (\ref{coleman-norm-def}) induces a group homomorphism,
which will be called the \textit{Coleman norm map from $L$ to $M$},
\begin{equation}
\label{coleman-norm-1}
\widetilde{\mathcal N}_{L/M}^{\text{Coleman}}:
U_{\widetilde{\mathbb X}(L/K)}^\diamond/U_{\mathbb X(L/K)}
\rightarrow U_{\widetilde{\mathbb X}(M/K)}^\diamond/U_{\mathbb X(M/K)}
\end{equation}
and defined by
\begin{equation}
\label{coleman-norm-1-def}
\widetilde{\mathcal N}_{L/M}^{\text{Coleman}}(\overline{U})=
\widetilde{\mathcal N}_{L/M}(U).U_{\mathbb X(M/K)},
\end{equation}
for every $U\in U_{\widetilde{\mathbb X}(L/K)}^\diamond$, where
$\overline{U}$ denotes, as before, the coset $U.U_{\mathbb X(L/K)}$ in
$U_{\widetilde{\mathbb X}(L/K)}^\diamond/U_{\mathbb X(L/K)}$.

The following theorem is stated, in the works of Fesenko \cite{fesenko-2000,
fesenko-2001, fesenko-2005},  without a proof. Thus, for the sake of 
completeness, we shall supply a proof of this theorem as well.
\begin{theorem}[Fesenko]
For the Galois sub-extension $M/K$ of $L/K$,
the square
\begin{equation}
\label{square-L/M-U}
\SelectTips{cm}{}\xymatrix{
{\text{Gal}(L/K)}\ar[r]^-{\phi_{L/K}^{(\varphi)}}\ar[d]_{\text{res}_M} & 
{U_{\widetilde{\mathbb X}{(L/K)}}^\diamond/U_{\mathbb X(L/K)}}
\ar[d]^{\widetilde{\mathcal N}_{L/M}^{\text{Coleman}}} \\
{\text{Gal}(M/K)}\ar[r]^-{\phi_{M/K}^{(\varphi)}} & 
{U_{\widetilde{\mathbb X}{(M/K)}}^\diamond/U_{\mathbb X(M/K)}},
}
\end{equation}
where the right-vertical arrow
\begin{equation*}
\widetilde{\mathcal N}_{L/M}^{\text{Coleman}} :
U_{\widetilde{\mathbb X}{(L/K)}}^\diamond/U_{\mathbb X(L/K)}
\rightarrow U_{\widetilde{\mathbb X}{(M/K)}}^\diamond/U_{\mathbb X(M/K)}
\end{equation*}
is the Coleman norm map from $L$ to $M$ defined by eq.s
(\ref{coleman-norm-1}) and (\ref{coleman-norm-1-def}), is commutative.
\end{theorem}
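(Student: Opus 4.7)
The plan is to chase an element $\sigma \in \text{Gal}(L/K)$ around the square and use the characterization of $\phi_{M/K}^{(\varphi)}(\sigma|_M)$ as the (coset of any) solution to the Fesenko equation in Theorem \ref{fundamental-equation}. Concretely, let $U_\sigma \in U_{\widetilde{\mathbb X}(L/K)}^\diamond$ be a solution of
\begin{equation*}
U_\sigma^{1-\varphi} = \Pi_{\varphi;L/K}^{\sigma - 1},
\end{equation*}
so that $\phi_{L/K}^{(\varphi)}(\sigma) = U_\sigma \cdot U_{\mathbb X(L/K)}$. I would then show that $\widetilde{\mathcal N}_{L/M}(U_\sigma)$ is a solution of the analogous equation for $M/K$ at $\sigma|_M$, namely
\begin{equation*}
\widetilde{\mathcal N}_{L/M}(U_\sigma)^{1-\varphi} = \Pi_{\varphi;M/K}^{\sigma|_M - 1}.
\end{equation*}
Since $\widetilde{\mathcal N}_{L/M}(U_\sigma) \in U_{\widetilde{\mathbb X}(M/K)}^\diamond$ by property (ii) of the Coleman norm, Theorem \ref{fundamental-equation} will then imply $\widetilde{\mathcal N}_{L/M}(U_\sigma) \cdot U_{\mathbb X(M/K)} = \phi_{M/K}^{(\varphi)}(\sigma|_M)$, i.e. the square commutes.

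To establish the equation above, I would verify three compatibilities of $\widetilde{\mathcal N}_{L/M}$. First, I would show that
\begin{equation*}
\widetilde{\mathcal N}_{L/M}(\Pi_{\varphi;L/K}) = \Pi_{\varphi;M/K}.
\end{equation*}
Using the chain $(E_i)_i$ for $L/K$ and $(E_i \cap M)_i$ for $M/K$, one computes componentwise $\widetilde{N}_{E_i/E_i \cap M}(\pi_{E_i})$, observes that this is a prime element of $E_i \cap M$ (because $L/K$ is totally ramified, so is $E_i/E_i \cap M$), and checks norm-compatibility along the tower $(E_i \cap M)$ using the transitivity of norms and the norm-compatibility of $(\pi_{E_i})$. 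Uniqueness in the Koch-de Shalit theorem then forces the equality with $\Pi_{\varphi;M/K}$.

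Second, I would check that $\widetilde{\mathcal N}_{L/M}$ commutes with $\varphi$: the fixed lift of $\varphi$ to $\widetilde{L}$ preserves each $\widetilde{E}_i$ and each $\widetilde{E_i \cap M}$, so it commutes with the componentwise norms $\widetilde{N}_{E_i/E_i \cap M}$. Third, I would verify that $\widetilde{\mathcal N}_{L/M} \circ \sigma = \sigma|_M \circ \widetilde{\mathcal N}_{L/M}$ on $\widetilde{\mathbb X}(L/K)$. Here the point is that $\sigma \in \text{Gal}(L/K)$ preserves $E_i$ (since $E_i/K$ is Galois) and preserves $E_i \cap M$ (since $M/K$ is Galois), and hence normalizes the subgroup $\text{Gal}(E_i/E_i \cap M)$, which is exactly what makes $N_{E_i/E_i \cap M}$ equivariant for the $\sigma$-action. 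Combining these three facts, applying $\widetilde{\mathcal N}_{L/M}$ to $U_\sigma^{1-\varphi} = \Pi_{\varphi;L/K}^{\sigma-1}$ yields the desired identity.

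The main obstacle is the third compatibility: one must be careful that the $\text{Gal}(L/K)$-action on $U_{\widetilde{\mathbb X}(L/K)}^\diamond$ defined via the chains \eqref{galoismodule-action} and \eqref{galoismodule-action-2} descends through the componentwise norms to the $\text{Gal}(M/K)$-action on $U_{\widetilde{\mathbb X}(M/K)}^\diamond$, which ultimately rests on the fact that $\sigma$ normalizes each $\text{Gal}(E_i/E_i \cap M)$ and hence intertwines the norms with the induced automorphism $\sigma|_M$. Once the three compatibilities are in hand, passage to cosets modulo $U_{\mathbb X(L/K)}$ and $U_{\mathbb X(M/K)}$ (which is legitimate by properties (ii) and (iii) of the Coleman norm) finishes the proof, and moreover the independence of everything from the choice of chain (Remark \ref{chain-independence}) ensures that the construction is unambiguous.
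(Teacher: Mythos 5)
Your proposal is correct and follows essentially the same route as the paper's proof: reduce to showing that $\widetilde{\mathcal N}_{L/M}(U_\sigma)$ solves the Fesenko equation $U^{1-\varphi}=\Pi_{\varphi;M/K}^{\sigma\mid_M-1}$, which the paper does componentwise using the commutation of $\widetilde N_{E_i/E_i\cap M}$ with $\varphi$, its Galois equivariance, and the Koch--de Shalit norm coherence $\widetilde N_{E_i/E_i\cap M}(\pi_{E_i})=\pi_{E_i\cap M}$. Your three stated compatibilities are exactly the ingredients of that computation, so the argument matches the paper's.
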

\begin{proof}
For each $\sigma\in\text{Gal}(L/K)$, we have to show that
\begin{equation*}
\widetilde{\mathcal N}_{L/M}^{\text{Coleman}}
\left(\phi_{L/K}^{(\varphi)}(\sigma)\right)=
\phi_{M/K}^{(\varphi)}(\sigma\mid_M) .
\end{equation*} 
Thus, it suffices to prove the congruence
\begin{equation*}
\widetilde{\mathcal N}_{L/M}(U_\sigma)\equiv
U_{\sigma\mid_M}\pmod{U_{\mathbb X(M/K)}},
\end{equation*}
or equivalently, it suffices to prove that
\begin{equation*}
\frac{\widetilde{\mathcal N}_{L/M}(U_\sigma)}
{\widetilde{\mathcal N}_{L/M}(U_\sigma)^\varphi}=
\frac{\Pi_{\varphi;M/K}^{\sigma\mid_M}}{\Pi_{\varphi;M/K}}.
\end{equation*}
Now, without loss of generality, in view of Remark
\ref{chain-independence}, the ascending chain of extensions
\begin{equation*}
K=E_o\subset E_1\subset\cdots\subset E_i\subset\cdots\subset L
\end{equation*}
can be chosen as the \textit{basic sequence} introduced in the beginning of
this section. Thus, each extension $E_i/K$ is finite and Galois for
$0\leq i\in\mathbb Z$.
Now, let $U_\sigma=(u_{\widetilde{E}_i})_{0\leq i\in\mathbb Z}
\in U_{\widetilde{\mathbb X}(L/K)}^\diamond$. Then, for each $0\leq
i\in\mathbb Z$,
\begin{equation*}
\frac{\widetilde{N}_{E_i/E_i\cap M}(u_{\widetilde{E}_i})}
{\widetilde{N}_{E_i/E_i\cap M}(u_{\widetilde{E}_i})^\varphi} =
\frac{\widetilde{N}_{E_i/E_i\cap M}(u_{\widetilde{E}_i})}
{\widetilde{N}_{E_i/E_i\cap M}(u_{\widetilde{E}_i}^\varphi)} =
\widetilde{N}_{E_i/E_i\cap M}\left(\frac{u_{\widetilde{E}_i}}
{u_{\widetilde{E}_i}^\varphi}\right) .
\end{equation*}
Now, the equality $\frac{u_{\widetilde{E}_i}}{u_{\widetilde{E}_i}^\varphi}
=\frac{\pi_{E_i}^\sigma}{\pi_{E_i}}$, which follows from 
the equation $\frac{U_\sigma}{U_\sigma^\varphi}=
\frac{\Pi_{\varphi;L/K}^\sigma}{\Pi_{\varphi;L/K}}$, yields
\begin{equation*}
\frac{\widetilde{N}_{E_i/E_i\cap M}(u_{\widetilde{E}_i})}
{\widetilde{N}_{E_i/E_i\cap M}(u_{\widetilde{E}_i})^\varphi} =
\widetilde{N}_{E_i/E_i\cap
  M}\left(\frac{\pi_{E_i}^\sigma}{\pi_{E_i}}\right) .
\end{equation*}
Thus, by the theorem of Koch and de Shalit, it follows that,
\begin{equation*}
\widetilde{N}_{E_i/E_i\cap M}
\left(\frac{\pi_{E_i}^\sigma}{\pi_{E_i}}\right) =
\frac{\widetilde{N}_{E_i/E_i\cap M}(\pi_{E_i})^\sigma}
{\widetilde{N}_{E_i/E_i\cap M}(\pi_{E_i})} =
\frac{\pi_{E_i\cap M}^{\sigma\mid_M}}{\pi_{E_i\cap M}},
\end{equation*}
which proves that
\begin{equation*}
\frac{\widetilde{\mathcal N}_{L/M}(U_\sigma)}
{\widetilde{\mathcal N}_{L/M}(U_\sigma)^\varphi}=
\frac{\Pi_{\varphi;M/K}^{\sigma\mid_M}}{\Pi_{\varphi;M/K}}.
\end{equation*} 
Now the proof is complete.
\end{proof}
Now, let $F/K$ be a finite sub-extension of $L/K$. 
Then, as $F$ is \textit{compatible} with $(K,\varphi)$, in the 
sense of \cite{koch-deshalit}
pp. 89, we may fix the Lubin-Tate splitting over $F$ to be
$\varphi_F=\varphi_K=\varphi$. 
Thus, there exists the chain of field extensions 
\begin{equation*}
K\subseteq F\subseteq L\subseteq K_\varphi\subseteq F_{\varphi},
\end{equation*}
where $L$ is a totally-ramified $APF$-Galois extension over $F$ by
Lemma \ref{apftower}. So there exists the mapping
\begin{equation*}
\phi_{L/F}^{(\varphi)}:\text{Gal}(L/F)\rightarrow
U_{\widetilde{\mathbb X}{(L/F)}}^\diamond/U_{\mathbb X(L/F)}
\end{equation*}
corresponding to the extension $L/F$.

For the $APF$-extension $L/F$, fix an ascending chain
\begin{equation*}
F=F_o\subset F_1\subset\cdots\subset F_i\subset\cdots\subset L
\end{equation*}
satisfying
$L=\bigcup_{0\leq i\in\mathbb Z}F_i$ and $[F_{i+1}:F_i]<\infty$ for
every $0\leq i\in\mathbb Z$. Introduce the homomorphism
\begin{equation}
\label{Lambda-map}
\Lambda_{F/K} : \widetilde{\mathbb X}(L/F)^\times\rightarrow
\widetilde{\mathbb X}(L/K)^\times
\end{equation}
by
\begin{equation}
\label{Lambda-map-definition}
\Lambda_{F/K} :
(\alpha_F\xleftarrow{\widetilde{N}_{F_1/F}}\alpha_{F_1}
\xleftarrow{\widetilde{N}_{F_2/F_1}}\cdots)\mapsto
(\widetilde{N}_{F/K}(\alpha_F)\xleftarrow{\widetilde{N}_{F/K}}
\alpha_F\xleftarrow{\widetilde{N}_{F_1/F}}\alpha_{F_1}
\xleftarrow{\widetilde{N}_{F_2/F_1}}\cdots),
\end{equation}
for each $\left(\alpha_{F_i}\right)_{0\leq i\in\mathbb Z}
\in\widetilde{\mathbb X}(L/F)^\times$. 
\begin{remark}
It is clear that, the homomorphism
\begin{equation*}
\Lambda_{F/K} : \widetilde{\mathbb X}(L/F)^\times\rightarrow
\widetilde{\mathbb X}(L/K)^\times
\end{equation*} 
defined by eq.s (\ref{Lambda-map}) and 
(\ref{Lambda-map-definition}) does \textit{not} depend on
the choice of ascending chain of fields 
\begin{equation*}
F=F_o\subset F_1\subset\cdots\subset F_i\subset\cdots\subset L
\end{equation*}
satisfying
$L=\bigcup_{0\leq i\in\mathbb Z}F_i$ and $[F_{i+1}:F_i]<\infty$ for
every $0\leq i\in\mathbb Z$.
\end{remark}
The basic properties of this group homomorphism are the following
\begin{itemize}
\item[(i)]
The square
\begin{equation*}
\SelectTips{cm}{}\xymatrix{
{\widetilde{\mathbb X}(L/F)}\ar[r]^-{\Lambda_{F/K}} & 
{\widetilde{\mathbb X}(L/K)} \\
{\mathbb X(L/F)}\ar[r]^-{\Lambda_{F/K}}\ar[u]^{\text{inc.}} & 
{\mathbb X(L/K)}\ar[u]_{\text{inc.}}
}
\end{equation*}
is commutative.
\item[(ii)] 
If $U=(u_{\widetilde{F}_i})_{0\leq i\in\mathbb Z}\in
  U_{\widetilde{\mathbb X}(L/F)}$, then $\Lambda_{F/K}(U)\in
  U_{\widetilde{\mathbb X}(L/K)}$.
\item[(iii)]
If $U=(u_{\widetilde{F}_i})_{0\leq i\in\mathbb Z}\in
  U_{\widetilde{\mathbb X}(L/F)}^\diamond$, then $\Lambda_{F/K}(U)\in
  U_{\widetilde{\mathbb X}(L/K)}^\diamond$.
\item[(iv)]
If $U=(u_{F_i})_{0\leq i\in\mathbb Z}\in
  U_{\mathbb X(L/F)}$, then $\Lambda_{F/K}(U)\in
  U_{\mathbb X(L/K)}$.
\end{itemize}
Thus, the group homomorphism eq. (\ref{Lambda-map}) defined by
eq. (\ref{Lambda-map-definition}) induces a group homomorphism
\begin{equation}
\label{lambda-map}
\lambda_{F/K} : 
U_{\widetilde{\mathbb X}{(L/F)}}^\diamond/U_{\mathbb X(L/F)}
\rightarrow
U_{\widetilde{\mathbb X}{(L/K)}}^\diamond/U_{\mathbb X(L/K)}
\end{equation}
defined by
\begin{equation}
\label{lambda-map-definition}
\lambda_{F/K}:\overline{U}\mapsto\Lambda_{F/K}(U).U_{\mathbb X(L/K)},
\end{equation}
for every $U\in U_{\widetilde{\mathbb X}{(L/F)}}^\diamond$, where
$\overline{U}$ denotes, as before, the coset $U.U_{\mathbb X(L/F)}$ 
in $U_{\widetilde{\mathbb X}{(L/F)}}^\diamond/U_{\mathbb X(L/F)}$.

The following theorem is stated, in the works of Fesenko 
\cite{fesenko-2000, fesenko-2001, fesenko-2005},  without a proof. 
Thus, for the sake of completeness, we shall supply a proof of this 
theorem as well.
\begin{theorem}[Fesenko]
For the finite sub-extension $F/K$ of $L/K$, the square
\begin{equation}
\label{square-F/K-U}
\SelectTips{cm}{}\xymatrix{
{\text{Gal}(L/F)}\ar[r]^-{\phi_{L/F}^{(\varphi)}}\ar[d]_{\text{inc.}} & 
{U_{\widetilde{\mathbb X}{(L/F)}}^\diamond/U_{\mathbb X(L/F)}}
\ar[d]^{\lambda_{F/K}} \\
{\text{Gal}(L/K)}\ar[r]^-{\phi_{L/K}^{(\varphi)}} & 
{U_{\widetilde{\mathbb X}{(L/K)}}^\diamond/U_{\mathbb X(L/K)}},
}
\end{equation}
where the right-vertical arrow
\begin{equation*}
\lambda_{F/K} : U_{\widetilde{\mathbb X}{(L/F)}}^\diamond/U_{\mathbb X(L/F)}
\rightarrow U_{\widetilde{\mathbb X}{(L/K)}}^\diamond/U_{\mathbb X(L/K)}
\end{equation*}
is defined by eq.s (\ref{lambda-map}) and
(\ref{lambda-map-definition}), 
is commutative.
\end{theorem}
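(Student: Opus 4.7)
The plan is to mimic the preceding commutative-square proof (the one for $\widetilde{\mathcal N}_{L/M}^{\text{Coleman}}$): for each $\sigma\in\text{Gal}(L/F)$, viewed also as an element of $\text{Gal}(L/K)$ via the inclusion, I will show that $\Lambda_{F/K}(U_\sigma)$ is itself a solution of the Fesenko fundamental equation
\[
U^{1-\varphi}=\Pi_{\varphi;L/K}^{\sigma-1}
\]
attached to the larger extension $L/K$. By Theorem \ref{fundamental-equation} and property (iii) of the map $\Lambda_{F/K}$ (which places $\Lambda_{F/K}(U_\sigma)$ in the diamond subgroup $U_{\widetilde{\mathbb X}(L/K)}^\diamond$), the solution set for $L/K$ is exactly the coset of $U_{\mathbb X(L/K)}$, so this will prove that $\Lambda_{F/K}(U_\sigma)\equiv U_\sigma^{L/K}\pmod{U_{\mathbb X(L/K)}}$, which is the required commutativity $\lambda_{F/K}\circ\phi_{L/F}^{(\varphi)}=\phi_{L/K}^{(\varphi)}\circ\text{inc.}$

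To set up the computation cleanly, I invoke Remark \ref{chain-independence} together with the analogous chain-independence of $\Lambda_{F/K}$ to fix a single compatible ascending family: choose a basic sequence $F=F_0\subset F_1\subset\cdots\subset L$ for the $APF$-extension $L/F$ and use the enlarged chain $K\subset F=F_0\subset F_1\subset\cdots\subset L$ as the ascending family computing $\mathbb X(L/K)$ and $\widetilde{\mathbb X}(L/K)$. Under this choice, the theorem of Koch--de Shalit identifies the natural prime elements as
\[
\Pi_{\varphi;L/F}=(\pi_F,\pi_{F_1},\pi_{F_2},\dots),\qquad
\Pi_{\varphi;L/K}=(N_{F/K}(\pi_F),\pi_F,\pi_{F_1},\pi_{F_2},\dots),
\]
and the map $\Lambda_{F/K}$ acts on an arbitrary sequence $(\alpha_F,\alpha_{F_1},\dots)$ by prepending $\widetilde N_{F/K}(\alpha_F)$.

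The verification now splits into two compatibilities. First, $\Lambda_{F/K}$ commutes with $\varphi$: the fixed Lubin-Tate splitting $\varphi$ fixes $F$ pointwise (since $F\subseteq K_\varphi$), hence restricts to an automorphism of $\widetilde F/\widetilde K$, which commutes with $\widetilde N_{F/K}$. Second, the $\text{Gal}(L/K)$-action as defined in equations (\ref{galoismodule-action})-(\ref{galoismodule-action-2}) makes $\sigma\in\text{Gal}(L/F)$ act trivially on $\widetilde K$ and trivially on $F$, hence trivially on $\widetilde F$; in particular $\sigma$ commutes with the norm $\widetilde N_{F/K}:\widetilde F^\times\to\widetilde K^\times$. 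Using these two facts and writing $U_\sigma=(u_F,u_{F_1},u_{F_2},\dots)$, a direct component-wise check gives
\[
\Lambda_{F/K}(U_\sigma)^{1-\varphi}
=\bigl(\widetilde N_{F/K}(u_F^{1-\varphi}),\,u_F^{1-\varphi},\,u_{F_1}^{1-\varphi},\dots\bigr)
=\bigl(\widetilde N_{F/K}(\pi_F^{\sigma-1}),\,\pi_F^{\sigma-1},\,\pi_{F_1}^{\sigma-1},\dots\bigr),
\]
and the leading entry collapses to $1$ because $\sigma$ fixes $\pi_F\in F$, while the remaining entries are exactly the components of $\Pi_{\varphi;L/K}^{\sigma-1}$ (whose first two entries are also $1$ for the same reason). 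Thus $\Lambda_{F/K}(U_\sigma)$ solves the Fesenko equation for $L/K$, completing the argument.

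The only genuinely delicate point I foresee is the bookkeeping around the Galois-module conventions: one must be sure that the extension of $\sigma$ and $\varphi$ to $\widetilde L$ used implicitly in the identity $\Lambda_{F/K}(U^{1-\varphi})=\Lambda_{F/K}(U)^{1-\varphi}$ is the same extension used to define the diamond subgroup and the right-hand side $\Pi_{\varphi;L/K}^{\sigma-1}$. All other steps are either invocations of previously-proved statements (Koch-de Shalit, chain-independence, properties (i)-(iv) of $\Lambda_{F/K}$, Theorem \ref{fundamental-equation}) or straightforward component-wise computation.
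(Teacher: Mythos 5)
Your proposal is correct and follows essentially the same route as the paper's own proof: you show that $\Lambda_{F/K}(U_\sigma)$ again solves the fundamental equation $U^{1-\varphi}=\Pi_{\varphi;L/K}^{\sigma-1}$, using that $\Lambda_{F/K}$ commutes with $\varphi$ and that it carries $\Pi_{\varphi;L/F}$ (and its $\sigma$-twist) to $\Pi_{\varphi;L/K}$ (and its $\sigma$-twist), and then invoke Theorem \ref{fundamental-equation} to conclude modulo $U_{\mathbb X(L/K)}$. Your componentwise verification with the chain $K\subset F=F_0\subset F_1\subset\cdots$ merely makes explicit the identities $\Lambda_{F/K}(\Pi_{\varphi;L/F})=\Pi_{\varphi;L/K}$ and $\Lambda_{F/K}(\Pi_{\varphi;L/F}^{\sigma})=\Pi_{\varphi;L/K}^{\sigma}$ that the paper asserts directly.
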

\begin{proof}
For $\sigma\in\text{Gal}(L/F)$,
$\phi_{L/F}^{(\varphi)}(\sigma)=U_\sigma .U_{\mathbb X(L/F)}$, where
$U_\sigma\in U_{\widetilde{\mathbb X}(L/F)}^\diamond$ satisfies the
equality
\begin{equation}
\label{phi-definition}
\frac{U_\sigma}{U_\sigma^\varphi}=\frac{\Pi_{\varphi;L/F}^\sigma}
{\Pi_{\varphi;L/F}} .
\end{equation}
Here, $\Pi_{\varphi;L/F}$ is the norm compatible sequence of primes
$(\pi_{F_i})_{0\leq i\in\mathbb Z}$. Now,
\begin{equation*}
\Lambda_{F/K}\left(\frac{U_\sigma}{U_\sigma^\varphi}\right) =
\frac{\Lambda_{F/K}(U_\sigma)}{\Lambda_{F/K}(U_\sigma^\varphi)} =
\frac{\Lambda_{F/K}(U_\sigma)}{\Lambda_{F/K}(U_\sigma)^\varphi} .
\end{equation*}
On the other hand,
$\Lambda_{F/K}(\Pi_{\varphi;L/F})=\Pi_{\varphi;L/K}$ and 
$\Lambda_{F/K}(\Pi_{\varphi;L/F}^\sigma)=\Pi_{\varphi;L/K}^\sigma$.
Thus, eq. (\ref{phi-definition}) yields
\begin{equation*}
\frac{\Lambda_{F/K}(U_\sigma)}{\Lambda_{F/K}(U_\sigma)^\varphi}=
\frac{\Pi_{\varphi;L/K}^\sigma}
{\Pi_{\varphi;L/K}},
\end{equation*} 
which shows that
\begin{equation*}
\phi_{L/K}^{(\varphi)}(\sigma)=\Lambda_{F/K}(U_\sigma).U_{\mathbb
  X(L/K)}= \lambda_{F/K}(\phi_{L/F}^{(\varphi)}(\sigma)),
\end{equation*}
completing the proof of the commutativity of the square.
\end{proof}
If $L/K$ is furthermore a finite extension, then the composition
\begin{equation*}
\SelectTips{cm}{}\xymatrix{
{\text{Gal}(L/K)}\ar[r]^-{\phi_{L/K}^{(\varphi)}}\ar@/^3pc/[rr]^{\iota_{L/K}} &
U_{\widetilde{\mathbb X}{(L/K)}}^\diamond/U_{\mathbb X(L/K)}
\ar[r]^-{\text{Pr}_{\widetilde{K}}} & 
U_K/N_{L/K}U_L, 
}
\end{equation*}
is the Iwasawa-Neukirch map of the extension $L/K$. Thus,
\textit{the mapping $\phi_{L/K}^{(\varphi)}$ defined for $L/K$ is the 
generalization of the Iwasawa-Neukirch map 
$\iota_{L/K} : \text{Gal}(L/K)\rightarrow U_K/N_{L/K}(U_L)$
for the totally-ramified $APF$-Galois sub-extensions $L/K$ of 
$K_\varphi/K$}. 

Likewise, we can extend the definition of the Hazewinkel map
$h_{L/K}:U_K/N_{L/K}U_L\rightarrow\text{Gal}(L/K)^{ab}$ initially
defined for totally-ramified finite Galois extensions $L/K$ to
totally-ramified $APF$-Galois sub-extensions of $K_\varphi/K$ by 
generalizing Serre short exact sequence introduced in eq.s (\ref{serre1}) 
and (\ref{serre2}).
In order to do so, we first have to assume that the local field $K$
satisfies the condition
\begin{equation}
\label{rootofunity}
\pmb{\mu}_p(K^{sep})=\{\alpha\in K^{sep}:\alpha^p=1\}\subset K,
\end{equation}
where $p=\text{char}(\kappa_K)$. 
\begin{remark}
If $K$ is a local field of characteristic $p=\text{char}(\kappa_K)$, 
the assumption (\ref{rootofunity}) on $K$ is automatically satisfied.
For details on the assumption (\ref{rootofunity}) on $K$, we refer the reader
to \cite{fesenko-2000, fesenko-2001, fesenko-2005}.   
\end{remark}
In what follows,
as before, let $L/K$ be a totally-ramified $APF$-Galois extension
satisfying eq. (\ref{phitower}).
Under this assumption,
there exists a topological $\text{Gal}(L/K)$-submodule $Y_{L/K}$ of
$U_{\widetilde{\mathbb X}{(L/K)}}^\diamond$, such that 
\begin{itemize}
\item[(i)]
$U_{\mathbb X(L/K)}\subseteq Y_{L/K}$; 
\item[(ii)]
the composition
\begin{equation*}
\Phi_{L/K}^{(\varphi)}:\text{Gal}(L/K)\xrightarrow{\phi_{L/K}^{(\varphi)}} 
U_{\widetilde{\mathbb X}{(L/K)}}^\diamond/U_{\mathbb X(L/K)}
\xrightarrow[\txt{canonical \\topol. map}]{c_{L/K}}
U_{\widetilde{\mathbb X}{(L/K)}}^\diamond/Y_{L/K}
\end{equation*}
is a bijection with the extended Hazewinkel map 
$H_{L/K}^{(\varphi)}:
U_{\widetilde{\mathbb X}{(L/K)}}^\diamond/Y_{L/K}\rightarrow
\text{Gal}(L/K)$
as the inverse.
\end{itemize}
Now, we shall briefly review the constructions of the topological group 
$Y_{L/K}$ and the extended Hazewinkel map
$H_{L/K}^{(\varphi)}:
U_{\widetilde{\mathbb X}{(L/K)}}^\diamond/Y_{L/K}\rightarrow
\text{Gal}(L/K)$. For details, we refer the reader to 
\cite{fesenko-2000, fesenko-2001, fesenko-2005}, which we follow closely. 

Fix a \textit{basic} ascending chain of sub-extensions in $L/K$
\begin{equation}
\label{basic-ascending-chain-L/K}
K=K_o\subset K_1\subset\cdots\subset K_i\subset\cdots\subset L .
\end{equation} 
\textit{once and for all}.
Now, introduce the following notation. For each $1\leq i\in\mathbb Z$,
\begin{itemize}
\item[(i)]
let $\sigma_i$ denote an element of
$\text{Gal}(\widetilde{L}/\widetilde{K})$ satisfying
$<\sigma_i\mid_{K_i}>=\text{Gal}(K_i/K_{i-1})$;
\item[(ii)]
let $\widetilde{K}_i=K_i\widetilde{K}$.
\end{itemize}
By abelian local class field theory, for each $1\leq k\in\mathbb Z$, 
there exists an injective homomorphism
\begin{equation}
\label{injective-map}
\Xi_{K_{k+1}/K_k} : \text{Gal}(K_{k+1}/K_k)\rightarrow 
U_{\widetilde{K}_{k+1}}/U_{\widetilde{K}_{k+1}}^{\sigma_{k+1}-1}
\end{equation}
defined by
\begin{equation}
\label{injective-map-definition}
\Xi_{K_{k+1}/K_k} :
\tau\mapsto\pi_{K_{k+1}}^{\tau-1} U_{\widetilde{K}_{k+1}}^{\sigma_{k+1}-1},
\end{equation}
for every $\tau\in\text{Gal}(K_{k+1}/K_k)$. 
Let $\text{im}(\Xi_{K_{k+1}/K_k})=T_k^{(L/K)}=T_k$ be the
isomorphic copy of 
$\text{Gal}(K_{k+1}/K_k)$ in 
$U_{\widetilde{K}_{k+1}}/U_{\widetilde{K}_{k+1}}^{\sigma_{k+1}-1}$.
\begin{theorem}[Fesenko]
\label{split-exact-sequence}
Fix $1\leq k\in\mathbb Z$. Let
\begin{equation*}
T_k^{(L/K)'}=T_k'=T_k\cap\left(\prod_{1\leq i\leq k+1}U_{\widetilde{K}_{k+1}}
^{\sigma_i-1}\right)/U_{\widetilde{K}_{k+1}}^{\sigma_{k+1}-1}. 
\end{equation*}
Then the exact sequence
\begin{equation*}
\SelectTips{cm}{}\xymatrix@1{
1\ar[r] & T_k'\ar[r] & 
{\left(\prod_{1\leq i\leq k+1}U_{\widetilde{K}_{k+1}}
^{\sigma_i-1}\right)/U_{\widetilde{K}_{k+1}}^{\sigma_{k+1}-1}}
\ar[r]^-{\widetilde{N}_{K_{k+1}/K_k}} &
{\prod_{1\leq i\leq k}U_{\widetilde{K}_k}^{\sigma_i-1}}
\ar[r]\ar@/_2.2pc/[l]_{h_k^{(L/K)}=h_k}&1}
\end{equation*}
splits by a homomorphism
\begin{equation*}
h_k :\prod_{1\leq i\leq k}U_{\widetilde{K}_k}^{\sigma_i-1}\rightarrow
\left(\prod_{1\leq i\leq k+1}U_{\widetilde{K}_{k+1}}^{\sigma_i-1}\right)/
U_{\widetilde{K}_{k+1}}^{\sigma_{k+1}-1} .
\end{equation*}
This homomorphism is not unique in general.
\end{theorem}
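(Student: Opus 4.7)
The plan is to construct the splitting $h_k$ explicitly by lifting elements under the norm map $\widetilde N:=\widetilde N_{K_{k+1}/K_k}$. First I would assemble two structural facts about the extension $\widetilde K_{k+1}/\widetilde K_k$. By condition (iii) of the basic ascending chain, this extension is cyclic and totally ramified of prime degree $p$, and the residue field of $\widetilde K_k$ is algebraically closed. Consequently the norm $\widetilde N: U_{\widetilde K_{k+1}}\to U_{\widetilde K_k}$ is surjective, and Hilbert 90 for units gives $\ker\widetilde N = U_{\widetilde K_{k+1}}^{\sigma_{k+1}-1}$. In addition, since $K_{k+1}/K$ is Galois, $\text{Gal}(K_{k+1}/K_k)$ is normal in $\text{Gal}(K_{k+1}/K)$, so for each $1\le i\le k$ one has $\sigma_i^{-1}\sigma_{k+1}\sigma_i=\sigma_{k+1}^{a_i}$ with $a_i$ coprime to $p$; a reshuffling of $\widetilde N=\sum_{j=0}^{p-1}\sigma_{k+1}^j$ then gives the key commutation $\sigma_i\widetilde N=\widetilde N\sigma_i$ on $U_{\widetilde K_{k+1}}$.

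Then I would define $h_k$ on a general element $v=\prod_{i=1}^{k}w_i^{\sigma_i-1}$ of $\prod_{1\le i\le k}U_{\widetilde K_k}^{\sigma_i-1}$ as follows: for each $i$ pick a preimage $\widetilde w_i\in U_{\widetilde K_{k+1}}$ with $\widetilde N(\widetilde w_i)=w_i$ (available by surjectivity), and set
\[
h_k(v)\;=\;\prod_{i=1}^{k}\widetilde w_i^{\sigma_i-1}\pmod{U_{\widetilde K_{k+1}}^{\sigma_{k+1}-1}}.
\]
Each factor $\widetilde w_i^{\sigma_i-1}$ lies in $U_{\widetilde K_{k+1}}^{\sigma_i-1}$, so the image is contained in the middle term of the exact sequence. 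The splitting identity $\widetilde N\circ h_k=\mathrm{id}$ is then immediate from the commutation $\sigma_i\widetilde N=\widetilde N\sigma_i$, since $\widetilde N(\widetilde w_i^{\sigma_i-1})=w_i^{\sigma_i-1}$ for each $i$.

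The main obstacle is well-definedness. If $\prod w_i^{\sigma_i-1}=\prod w_i'^{\sigma_i-1}$ with alternative lifts $\widetilde w_i'$, the ratio $R:=\prod(\widetilde w_i/\widetilde w_i')^{\sigma_i-1}$ lies in $\prod_{1\le i\le k+1}U_{\widetilde K_{k+1}}^{\sigma_i-1}$ and satisfies $\widetilde N(R)=\prod(w_i/w_i')^{\sigma_i-1}=1$ by the commutation. Hilbert 90 for units then places $R$ inside $U_{\widetilde K_{k+1}}^{\sigma_{k+1}-1}$, so $R$ is trivial modulo what we quotient out. Multiplicativity follows by concatenating decompositions, and the asserted non-uniqueness reflects precisely the freedom in choosing the preimages $\widetilde w_i$, any two of which differ by an element of $\ker\widetilde N=U_{\widetilde K_{k+1}}^{\sigma_{k+1}-1}$.

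The hardest external input is the pair (norm surjectivity, Hilbert 90) on units for a totally wildly ramified cyclic degree-$p$ extension of $\widetilde K_k$. In characteristic $p$ these reduce to Artin--Schreier equations with coefficients in the algebraically closed residue field, and in mixed characteristic they follow by Kummer theory, exploiting the standing hypothesis on the $p$-th roots of unity in $K$ together with explicit higher-unit-filtration arguments as in Fesenko--Vostokov; once those inputs are in hand the above construction completes the proof.
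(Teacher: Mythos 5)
Your construction founders on the identification of $\ker\bigl(\widetilde{N}_{K_{k+1}/K_k}\colon U_{\widetilde{K}_{k+1}}\to U_{\widetilde{K}_k}\bigr)$. You invoke ``Hilbert 90 for units'' to claim this kernel equals $U_{\widetilde{K}_{k+1}}^{\sigma_{k+1}-1}$, but that is false in the present setting: by the Serre short exact sequence (recalled in eq. (\ref{serre1})), for the totally ramified cyclic extension $\widetilde{K}_{k+1}/\widetilde{K}_k$ one has
\begin{equation*}
\ker\bigl(\widetilde{N}_{K_{k+1}/K_k}\big|_{U_{\widetilde{K}_{k+1}}}\bigr)\big/\,U_{\widetilde{K}_{k+1}}^{\sigma_{k+1}-1}\;\simeq\;\text{Gal}(K_{k+1}/K_k),
\end{equation*}
the nontrivial quotient being generated by the class of $\pi_{K_{k+1}}^{\sigma_{k+1}-1}$; classical Hilbert 90 only controls $H^1$ with coefficients in $\widetilde{K}_{k+1}^\times$, not in the unit group. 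This quotient is precisely the group $T_k=\text{im}(\Xi_{K_{k+1}/K_k})$ of the theorem, and $T_k'$ is exactly the kernel of the displayed sequence. Consequently your well-definedness argument collapses: the ratio $R$ coming from two decompositions or two systems of norm-lifts is only known to satisfy $\widetilde{N}_{K_{k+1}/K_k}(R)=1$, so it can represent a nontrivial element of $T_k'$, and nothing forces it into $U_{\widetilde{K}_{k+1}}^{\sigma_{k+1}-1}$. Indeed, if your form of Hilbert 90 were correct, $T_k'$ would always be trivial and the theorem would have essentially no content; the whole point is that the obstruction group $T_k'$ can be cyclic of order $p$. (Your preliminary observations --- surjectivity of the norm on units over $\widetilde{K}_k$, and the commutation $\sigma_i\circ\widetilde{N}=\widetilde{N}\circ\sigma_i$ coming from normality of $\text{Gal}(K_{k+1}/K_k)$ --- are fine; the error is solely in the kernel.)

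The paper's route (reviewed later in the text when the splittings $h_k^{(L/K)}$ and $h_k^{(M/K)}$ are compared) avoids this by never defining $h_k$ on arbitrary decompositions. One regards $\prod_{1\leq i\leq k}U_{\widetilde{K}_k}^{\sigma_i-1}$ as a closed $\mathbb Z_p$-submodule of $U_{\widetilde{K}_k}^1$ and chooses a system of topological multiplicative generators $\{\lambda_j\}$ containing at most one torsion generator $\lambda_*$ of order $p^m$, the remaining $\lambda_j$ being topologically independent over $\mathbb Z_p$; one sets $h_k(\lambda_j)=u_j.U_{\widetilde{K}_{k+1}}^{\sigma_{k+1}-1}$ for a chosen norm-preimage $u_j\in\prod_{1\leq i\leq k+1}U_{\widetilde{K}_{k+1}}^{\sigma_i-1}$ and extends by continuity. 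Because the generators are (topologically) free apart from $\lambda_*$, the only relation that must be respected is $\lambda_*^{p^m}=1$, and the needed fact $h_k(\lambda_*)^{p^m}\in U_{\widetilde{K}_{k+1}}^{\sigma_{k+1}-1}$ is exactly step 5 of the proof of the Theorem in Section 3 of \cite{fesenko-2005}; this is the genuinely hard input that your argument has no substitute for. The non-uniqueness of $h_k$ then reflects the choices of generators and preimages. To repair your proposal you would have to either prove the torsion compatibility statement just mentioned or otherwise control the image of your ratios $R$ in $T_k'$, and that is precisely where the real work lies.
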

For each $1\leq k\in\mathbb Z$, consider any map
\begin{equation*}
g_k^{(L/K)}=g_k :\prod_{1\leq i\leq k}U_{\widetilde{K}_k}^{\sigma_i-1}
\rightarrow
\prod_{1\leq i\leq k+1}U_{\widetilde{K}_{k+1}}^{\sigma_i-1}
\end{equation*}
which makes the triangle
\begin{equation*}
\SelectTips{cm}{}\xymatrix{
{} & {\prod_{1\leq i\leq k+1}U_{\widetilde{K}_{k+1}}^{\sigma_i-1}}
\ar[dd]^{\pmod{U_{\widetilde{K}_{k+1}}^{\sigma_{k+1}-1}}} \\
 &  &  \\
{\prod_{1\leq i\leq k}U_{\widetilde{K}_k}^{\sigma_i-1}}
\ar[r]^-{h_k}\ar[uur]^{g_k} & 
{\left(\prod_{1\leq i\leq k+1}U_{\widetilde{K}_{k+1}}^{\sigma_i-1}\right)/
U_{\widetilde{K}_{k+1}}^{\sigma_{k+1}-1}}
}
\end{equation*}
commutative. Such a map clearly exists.
Now, choose, for every $1\leq i\in\mathbb Z$, a mapping 
\begin{equation*}
f_i^{(L/K)}=f_i : U_{\widetilde{K}_i}^{\sigma_i-1}\rightarrow
U_{\widetilde{\mathbb X}(L/K_i)}\xrightarrow{\Lambda_{K_i/K}}
U_{\widetilde{\mathbb X}(L/K)}
\end{equation*}
which satisfies for each $j\in\mathbb Z_{>i}$ the equality
\begin{equation*}
\text{Pr}_{\widetilde{K}_j}\circ f_i=
(g_{j-1}\circ\cdots\circ g_i)\mid_{U_{\widetilde{K}_i}^{\sigma_i-1}},
\end{equation*}
where $\text{Pr}_{\widetilde{K}_j}:U_{\widetilde{\mathbb X}(L/K)}\rightarrow
U_{\widetilde{K}_j}$ denotes the projection on the
$\widetilde{K}_j$-coordinate. 
\begin{lemma}[Fesenko]
\label{infinite-product}
\begin{itemize}
\item[(i)]
Let $z^{(i)}\in\text{im}(f_i)=Z_i^{(L/K)}$ for each $1\leq i\in\mathbb Z$. 
Then the infinite product $\prod_i z^{(i)}$ converges to an element $z$ in
$U_{\widetilde{\mathbb X}(L/K)}^\diamond$. 
\item[(ii)]
Let
\begin{equation*}
Z_{L/K}(\{K_i,f_i\})=
\left\{\prod_{1\leq i\in\mathbb Z} z^{(i)} : 
z^{(i)}\in\text{im}(f_i)\right\}.
\end{equation*}
Then, $Z_{L/K}(\{K_i,f_i\})$ is a topological subgroup of 
$U_{\widetilde{\mathbb X}(L/K)}^\diamond$.
\end{itemize}
\end{lemma}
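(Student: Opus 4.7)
The plan is to reduce part (i) to a telescoping coordinate computation and to prove part (ii) by an inductive coordinate-by-coordinate adjustment, the latter being where the main work lies.

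For (i), I first unpack the coordinates of $f_i(u)$ in $U_{\widetilde{\mathbb X}(L/K)}$ from the defining relations of $\Lambda_{K_i/K}$ and $f_i$: the $\widetilde{K}_j$-coordinate of $f_i(u)$ equals $\widetilde{N}_{K_i/K_j}(u)$ for $0\leq j<i$, equals $u$ at $\widetilde{K}_i$, and equals the product of components of $(g_{j-1}\circ\cdots\circ g_i)(u)$ for $j>i$. The key observation is that for $u=v^{\sigma_i-1}\in U_{\widetilde{K}_i}^{\sigma_i-1}$ one has $\widetilde{N}_{K_i/K_{i-1}}(u)=1$, because $\widetilde{N}_{K_i/K_{i-1}}$ is $\sigma_i$-invariant and $\sigma_i$ generates $\text{Gal}(\widetilde{K}_i/\widetilde{K}_{i-1})$. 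Iterating gives $\widetilde{N}_{K_i/K_j}(u)=1$ for every $0\leq j<i$, so the $\widetilde{K}_j$-coordinate of $f_i(u)$ is trivial for $j<i$. Consequently the partial product $P_N=\prod_{i=1}^{N}z^{(i)}$ has $\widetilde{K}_j$-coordinate equal to $\prod_{i=1}^{j}(z^{(i)})_{\widetilde{K}_j}$ once $N\geq j$, so $(P_N)_N$ stabilizes coordinatewise. Since each $P_N$ lies in $U_{\widetilde{\mathbb X}(L/K)}$, the norm-compatibility relations pass to the stabilized sequence, yielding $z\in U_{\widetilde{\mathbb X}(L/K)}$; and its $\widetilde{K}$-coordinate is the empty product $1\in U_K$, so $z\in U_{\widetilde{\mathbb X}(L/K)}^\diamond$.

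For (ii), given $z=\prod_i f_i(\alpha_i)$ and $w=\prod_i f_i(\beta_i)$, I would construct $\gamma_i\in U_{\widetilde{K}_i}^{\sigma_i-1}$ recursively so that $zw=\prod_i f_i(\gamma_i)$. Set $\gamma_1=\alpha_1\beta_1$. At stage $i+1$, form the discrepancy $d_i=zw\cdot\bigl(\prod_{j\leq i}f_j(\gamma_j)\bigr)^{-1}$, which by induction has trivial $\widetilde{K}_j$-coordinate for all $j\leq i$. Norm-compatibility of $d_i$ forces $\widetilde{N}_{K_{i+1}/K_i}\bigl((d_i)_{\widetilde{K}_{i+1}}\bigr)=(d_i)_{\widetilde{K}_i}=1$; hence by Hilbert 90 applied to the cyclic extension $\widetilde{K}_{i+1}/\widetilde{K}_i$, the element $(d_i)_{\widetilde{K}_{i+1}}$ lies in $U_{\widetilde{K}_{i+1}}^{\sigma_{i+1}-1}$, and I take $\gamma_{i+1}=(d_i)_{\widetilde{K}_{i+1}}$. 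Writing $d_{i+1}=d_i\cdot f_{i+1}(\gamma_{i+1})^{-1}$ and using that $(f_{i+1}(\gamma_{i+1}))_{\widetilde{K}_j}=1$ for $j\leq i$ (from (i)) and equals $\gamma_{i+1}$ at $\widetilde{K}_{i+1}$, one checks that $d_{i+1}$ has trivial coordinates at every level $\leq i+1$, so the induction continues. Closure under inversion follows by the same recursion applied to the equation $1=z\cdot z^{-1}$.

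The main obstacle is ensuring that this recursion actually identifies $zw$ with an element of $Z_{L/K}(\{K_i,f_i\})$. By part (i), $\prod_i f_i(\gamma_i)$ converges in $U_{\widetilde{\mathbb X}(L/K)}^\diamond$, and by construction its $\widetilde{K}_j$-coordinate equals $(zw)_{\widetilde{K}_j}$ for every $j$; since two elements of $U_{\widetilde{\mathbb X}(L/K)}$ agreeing at every coordinate are equal, we conclude $zw=\prod_i f_i(\gamma_i)\in Z_{L/K}(\{K_i,f_i\})$. The delicate point is the transfer of information between levels via the higher-coordinate effects of the lifts $g_k$ supplied by Theorem \ref{split-exact-sequence}: the splitting guarantees that the correction introduced at level $i+1$ propagates upward consistently with the defining identity $\text{Pr}_{\widetilde{K}_j}\circ f_{i+1}=(g_{j-1}\circ\cdots\circ g_{i+1})\mid_{U_{\widetilde{K}_{i+1}}^{\sigma_{i+1}-1}}$, so that the discrepancies $d_i$ tend to $1$ in the right topology. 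Finally, topological closure of $Z_{L/K}(\{K_i,f_i\})$ in $U_{\widetilde{\mathbb X}(L/K)}^\diamond$ follows from the fact that each coordinate of an element of $Z_{L/K}(\{K_i,f_i\})$ is a continuous function of finitely many of the parameters $\alpha_j$.
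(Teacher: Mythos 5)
Your part (i) is essentially the right (and standard) argument: since every $\alpha^{(i)}\in U_{\widetilde{K}_i}^{\sigma_i-1}$ satisfies $\widetilde{N}_{K_i/K_{i-1}}(\alpha^{(i)})=1$, the factor $z^{(i)}=f_i(\alpha^{(i)})$ has trivial $\widetilde{K}_j$-coordinate for all $j<i$ and coordinate $\alpha^{(i)}$ at level $i$, so the partial products stabilize coordinatewise, the limit is a norm-compatible unit, and its $\widetilde{K}$-coordinate is $1$ (this is also how Remark \ref{Z-subset-U1} places $Z_{L/K}(\{K_i,f_i\})$ inside $U^1_{\widetilde{\mathbb X}(L/K)}$). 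You should only add a word on why coordinatewise stabilization gives convergence for the topology actually used on $U_{\widetilde{\mathbb X}(L/K)}$.

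The genuine gap is in part (ii), at the step where you set $\gamma_{i+1}=(d_i)_{\widetilde{K}_{i+1}}$ and justify $\gamma_{i+1}\in U_{\widetilde{K}_{i+1}}^{\sigma_{i+1}-1}$ ``by Hilbert 90''. That implication is false: Hilbert 90 only yields $(d_i)_{\widetilde{K}_{i+1}}=y^{\sigma_{i+1}-1}$ with $y\in\widetilde{K}_{i+1}^\times$, not with $y$ a unit, and the quotient of the norm-one units of $\widetilde{K}_{i+1}$ by $U_{\widetilde{K}_{i+1}}^{\sigma_{i+1}-1}$ is $H^1(\langle\sigma_{i+1}\rangle,U_{\widetilde{K}_{i+1}})\cong\mathbb Z/[K_{i+1}:K_i]\mathbb Z$, generated by the class of $\pi_{K_{i+1}}^{\sigma_{i+1}-1}$; this nonvanishing is exactly what the Serre exact sequence (\ref{serre1}) and the groups $T_k$, $T_k'$ of Theorem \ref{split-exact-sequence} encode, so it cannot be waved away. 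The membership you need is true, but for a different reason, and supplying it is the real content of the lemma: one must track that the $\widetilde{K}_m$-coordinate of any product $\prod_j f_j(\alpha_j)$ lies in the subgroup $\prod_{1\leq t\leq m}U_{\widetilde{K}_m}^{\sigma_t-1}$, and use that $g_m$ agrees with the \emph{homomorphism} $h_m$ modulo $U_{\widetilde{K}_{m+1}}^{\sigma_{m+1}-1}$. Writing $X_m,Y_m,G_m$ for the $\widetilde{K}_m$-coordinates of $z$, $w$ and $\prod_j f_j(\gamma_j)$, one gets $X_{m+1}\equiv\alpha_{m+1}h_m(X_m)$, $Y_{m+1}\equiv\beta_{m+1}h_m(Y_m)$, $G_{m+1}\equiv\gamma_{m+1}h_m(G_m)$ modulo $U_{\widetilde{K}_{m+1}}^{\sigma_{m+1}-1}$, so the inductive hypothesis $G_i=X_iY_i$ and the homomorphism property of $h_i$ give $\gamma_{i+1}\equiv\alpha_{i+1}\beta_{i+1}\pmod{U_{\widetilde{K}_{i+1}}^{\sigma_{i+1}-1}}$, whence $\gamma_{i+1}\in U_{\widetilde{K}_{i+1}}^{\sigma_{i+1}-1}$ because $\alpha_{i+1}\beta_{i+1}$ is. Note you cannot shortcut this by treating $\mathrm{im}(f_i)$ as a subgroup --- the $f_i$ and $g_i$ are merely maps, not homomorphisms --- so the splittings $h_k$ must enter explicitly; your closing remark about the lifts $g_k$ ``propagating consistently'' gestures at this but never supplies it, and as written your recursion rests on the incorrect Hilbert 90 claim.
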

\begin{remark}
\label{Z-subset-U1}
In fact, $Z_{L/K}(\{K_i,f_i\})$ is a topological subgroup of
$U_{\widetilde{\mathbb X}(L/K)}^1$. Let $z\in Z_{L/K}(\{K_i,f_i\})$ and
choose $z^{(i)}\in\text{im}(f_i)\subset U_{\widetilde{\mathbb X}(L/K)}$ 
so that $z=\prod_i z^{(i)}$. 
It suffices to show that $\text{Pr}_{\widetilde K}(z^{(i)})=1_K$. In order to 
do so, let 
$\alpha^{(i)}\in U_{\widetilde{K}_i}^{\sigma_i-1}$ such that
$f_i(\alpha^{(i)})=z^{(i)}$.
Thus, $\text{Pr}_{\widetilde{K}_i}(z^{(i)})=\alpha^{(i)}$. 
Now, by Hilbert $90$,
it follows that, $\widetilde N_{K_i/K}(\alpha^{(i)})
=(\widetilde N_{K_{i-1}/K}\circ\widetilde N_{K_i/K_{i-1}})(\alpha^{(i)})
=1_K$, which completes the proof.
\end{remark}
\begin{lemma}
\label{gal-action-Z}
For $1\leq i\in\mathbb Z$, let $\sigma=\sigma_i\in
\text{Gal}(\widetilde{L}/\widetilde{K})$ such that
$<\sigma\mid_{K_i}>=\text{Gal}(K_i/K_{i-1})$.
Let $\tau\in\text{Gal}(L/K)$ viewed as an element of 
$\text{Gal}(\widetilde{L}/\widetilde{K})$. Then
\begin{equation*}
\left(U_{\widetilde{K}_i}^{\sigma-1}\right)^\tau=
U_{\widetilde{K}_i}^{\sigma-1}.
\end{equation*}
\end{lemma}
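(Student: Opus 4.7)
The plan is to exploit the assumption that $K = K_0 \subset K_1 \subset \cdots \subset L$ is a \emph{basic} ascending chain, so that every $K_j/K$ is Galois and $\text{Gal}(K_i/K_{i-1})$ is cyclic. First I would observe that, since $K_i/K$ is Galois and $\tau$ (viewed in $\text{Gal}(\widetilde{L}/\widetilde{K})$, hence fixing $\widetilde{K}$ pointwise) maps $K_i$ to $K_i$, the automorphism $\tau$ preserves $\widetilde{K}_i = K_i\widetilde{K}$, and therefore preserves $U_{\widetilde{K}_i}$. The problem then reduces to analyzing the action of $\tau$ on a generator $u^{\sigma - 1}$ of $U_{\widetilde{K}_i}^{\sigma - 1}$, and the identity
\begin{equation*}
(u^{\sigma - 1})^{\tau} \;=\; \tau(u)^{\tau\sigma\tau^{-1} - 1}
\end{equation*}
shifts the burden onto understanding $\sigma' := \tau\sigma\tau^{-1}$ on $\widetilde{K}_i$.

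The key step is to show that $\sigma'|_{\widetilde{K}_i}$ is a power of $\sigma|_{\widetilde{K}_i}$. Because the basic chain has both $K_{i-1}/K$ and $K_i/K$ Galois, the subgroup $\text{Gal}(K_i/K_{i-1})$ is \emph{normal} in $\text{Gal}(K_i/K)$, and by construction it is cyclic, generated by $\sigma|_{K_i}$. Hence the restriction $\tau|_{K_i}$, which lies in $\text{Gal}(K_i/K)$, conjugates $\sigma|_{K_i}$ into the same cyclic group, so
\begin{equation*}
\tau|_{K_i}\,\sigma|_{K_i}\,\tau|_{K_i}^{-1} \;=\; (\sigma|_{K_i})^{n}
\end{equation*}
for some integer $n$. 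Transporting this identity to $\widetilde{K}_i$ via the canonical isomorphism $\text{Gal}(\widetilde{K}_i/\widetilde{K}) \xrightarrow{\sim} \text{Gal}(K_i/K)$ (valid because $L/K$ is totally ramified, so $K_i \cap \widetilde{K} = K$) gives $\sigma'|_{\widetilde{K}_i} = (\sigma|_{\widetilde{K}_i})^{n}$.

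Setting $v = \tau(u)$ and factoring
\begin{equation*}
\sigma^{n} - 1 \;=\; (1 + \sigma + \cdots + \sigma^{n-1})(\sigma - 1)
\end{equation*}
in the group ring yields $(u^{\sigma-1})^{\tau} = v^{\sigma^{n}-1} = \bigl(v^{1 + \sigma + \cdots + \sigma^{n-1}}\bigr)^{\sigma-1} \in U_{\widetilde{K}_i}^{\sigma-1}$, establishing the containment $(U_{\widetilde{K}_i}^{\sigma-1})^{\tau} \subseteq U_{\widetilde{K}_i}^{\sigma-1}$. The reverse inclusion is obtained by running exactly the same argument with $\tau^{-1}$ in place of $\tau$, whence equality. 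The only subtle point I would expect is the verification that $\tau\sigma\tau^{-1}$ restricts to a power of $\sigma$: this is not a formal property of the lifting, but depends crucially on the basic-chain hypothesis that forces $\text{Gal}(K_i/K_{i-1})$ to be a normal cyclic subgroup of $\text{Gal}(K_i/K)$. Once this is in hand, the telescoping identity above does the rest.
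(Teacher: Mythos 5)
Your proof is correct and follows essentially the same route as the paper's: identify the conjugate of $\sigma$ by $\tau$ (the direction of conjugation is just a convention) with a power $\sigma^{n}$ on $\widetilde{K}_i$, using that the basic chain makes $\text{Gal}(K_i/K_{i-1})$ a normal cyclic subgroup of $\text{Gal}(K_i/K)$ and that both automorphisms fix $\widetilde{K}$, then use the telescoping factorization of $\sigma^{n}-1$ to get one inclusion and apply the same argument to $\tau^{-1}$ for equality. Your explicit appeal to normality merely spells out what the paper leaves implicit.
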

\begin{proof}
Let $\tau$ be any element of $\text{Gal}(L/K)$. Now consider $\tau$ as an
element of $\text{Gal}(\widetilde{L}/\widetilde{K})$. Then clearly,
the conjugate $\tau^{-1}\sigma\tau\in\text{Gal}(\widetilde{L}/\widetilde{K})$
satisfies $<\tau^{-1}\sigma\tau\mid_{K_i}>=\text{Gal}(K_i/K_{i-1})$ as
$\left(\tau^{-1}\sigma\tau\mid_{K_i}\right)^n=\text{id}_{K_i}$ yields
$\left(\sigma\mid_{K_i}\right)^n=\text{id}_{K_i}$. Let $0<d\in\mathbb Z$
such that $\tau^{-1}\sigma\tau\mid_{K_i}=(\sigma\mid_{K_i})^d
=(\sigma^d)\mid_{K_i}$. Thus $\tau^{-1}\sigma\tau\sigma^{-d}\in
\text{Gal}(\widetilde{L}/\widetilde{K}_i)$ as $\widetilde{K}_i
=\widetilde{K}K_i$. It then follows that 
\begin{equation*}
U_{\widetilde{K}_i}^{\tau^{-1}\sigma\tau-1}=U_{\widetilde{K}_i}^{\sigma^d-1}.
\end{equation*}
As $U_{\widetilde{K}_i}^{\tau^{-1}\sigma\tau-1}
=U_{\widetilde{K}_i}^{\tau^{-1}(\sigma-1)\tau}
=\left(U_{\widetilde{K}_i}^{\sigma-1}\right)^\tau$, the equality
\begin{equation*}
\left(U_{\widetilde{K}_i}^{\sigma-1}\right)^\tau
=U_{\widetilde{K}_i}^{\sigma^d-1}
\end{equation*}
follows as well. Now, the inclusion
\begin{equation*}
U_{\widetilde{K}_i}^{\sigma^d-1}\subseteq
U_{\widetilde{K}_i}^{\sigma-1}
\end{equation*}
is clear, because, for $u\in U_{\widetilde{K}_i}$,
\begin{equation*}
\frac{u^{\sigma^d}}{u}=\frac{\left(u^{\sigma^{d-1}}\right)^\sigma}
{u^{\sigma^{d-1}}}\cdots
\frac{\left(u^{\sigma}\right)^\sigma}{u^\sigma}\frac{u^\sigma}{u}.
\end{equation*}
Thus, for $\tau\in\text{Gal}(L/K)$, the inclusion
$\left(U_{\widetilde{K}_i}^{\sigma-1}\right)^\tau\subseteq 
U_{\widetilde{K}_i}^{\sigma-1}$ follows. Hence, for $\tau\in\text{Gal}(L/K)$,
\begin{equation*}
\left(U_{\widetilde{K}_i}^{\sigma-1}\right)^\tau = 
U_{\widetilde{K}_i}^{\sigma-1}
\end{equation*}
completing the proof.
\end{proof}
Now, let $\tau\in\text{Gal}(L/K)$. Consider the element $\tau^{-1}\sigma_i\tau$
of $\text{Gal}(\widetilde{L}/\widetilde{K})$ for each $1\leq i\in\mathbb Z$.
Then clearly, $<\tau^{-1}\sigma_i\tau\mid_{K_i}>=\text{Gal}(K_i/K_{i-1})$.
By abelian local class field theory and by Lemma \ref{gal-action-Z}, the 
square
\begin{equation*}
\SelectTips{cm}{}\xymatrix{
{\text{Gal}(K_i/K_{i-1})}\ar[rr]^{\Xi_{K_i/K_{i-1}}}
\ar[d]_{\text{$\tau$-conjugation}}  & & 
{U_{\widetilde{K}_i}/U_{\widetilde{K}_i}^{\sigma_i-1}}\ar[d]^{\tau} \\
{\text{Gal}(K_i/K_{i-1})}\ar[rr]^{\Xi_{K_i/K_{i-1}}} & & {U_{\widetilde{K}_i}/
U_{\widetilde{K}_i}^{\sigma_i-1}}
}
\end{equation*}
is commutative, where the $\tau$-conjugation map $\text{Gal}(K_i/K_{i-1})
\rightarrow\text{Gal}(K_i/K_{i-1})$ is defined by 
$\gamma\mapsto\tau^{-1}\gamma\tau$ for every 
$\gamma\in\text{Gal}(K_i/K_{i-1})$. 
Thus, it follows that 
\begin{equation*}
\text{im}\left(\Xi_{K_i/K_{i-1}}\right)^\tau
=\text{im}\left(\Xi_{K_i/K_{i-1}}\right).
\end{equation*}
Now, following Theorem \ref{split-exact-sequence}, for
\begin{equation*}
T_i^\tau=T_i=\text{im}(\Xi_{K_{i+1}/K_i})
\end{equation*}
and
\begin{equation*}
(T_i')^\tau=T_i^\tau\cap\left(\prod_{1\leq j\leq i+1}U_{\widetilde{K}_{i+1}}
^{\tau^{-1}\sigma_j\tau-1}\right)/U_{\widetilde{K}_{i+1}}
^{\tau^{-1}\sigma_{i+1}\tau-1},
\end{equation*}
the exact sequence
\begin{equation*}
\SelectTips{cm}{}\xymatrix@1{
1\ar[r] & (T_i')^\tau \ar[r] & 
{\left(\prod_{1\leq j\leq i+1}U_{\widetilde{K}_{i+1}}
^{\tau^{-1}\sigma_j\tau-1}\right)/U_{\widetilde{K}_{i+1}}^{\tau^{-1}
\sigma_{i+1}\tau-1}}
\ar[r]^-{\widetilde{N}_{K_{i+1}/K_i}} &
{\prod_{1\leq j\leq i}U_{\widetilde{K}_i}^{\tau^{-1}\sigma_j\tau-1}}
\ar[r]\ar@/_2.2pc/[l]_{(h_i^{(L/K)})^\tau=h_i^\tau}&1}
\end{equation*}
splits by a homomorphism
\begin{equation*}
h_i^\tau :\prod_{1\leq j\leq i}U_{\widetilde{K}_i}^{\tau^{-1}\sigma_j\tau-1}
\rightarrow
\left(\prod_{1\leq j\leq i+1}U_{\widetilde{K}_{i+1}}^{\tau^{-1}\sigma_j\tau-1}
\right)/U_{\widetilde{K}_{i+1}}^{\tau^{-1}\sigma_{i+1}\tau-1},
\end{equation*}
and which furthermore makes the diagram 
\begin{equation*}
\SelectTips{cm}{}\xymatrix{
1\ar[r] & T_i' \ar[r]\ar[d]_{\tau} & 
{\left(\prod_{1\leq j\leq i+1}U_{\widetilde{K}_{i+1}}
^{\sigma_j-1}\right)/U_{\widetilde{K}_{i+1}}^{\sigma_{i+1}-1}}
\ar[r]^-{\widetilde{N}_{K_{i+1}/K_i}}\ar[d]_{\tau} &
{\prod_{1\leq j\leq i}U_{\widetilde{K}_i}^{\sigma_j-1}}
\ar[r]\ar@/_2.2pc/[l]_{h_i^{(L/K)}=h_i}\ar[d]_{\tau} & 1 \\
1\ar[r] & (T_i')^\tau \ar[r] & 
{\left(\prod_{1\leq j\leq i+1}U_{\widetilde{K}_{i+1}}
^{\tau^{-1}\sigma_j\tau-1}\right)/U_{\widetilde{K}_{i+1}}^{\tau^{-1}
\sigma_{i+1}\tau-1}}
\ar[r]^-{\widetilde{N}_{K_{i+1}/K_i}} &
{\prod_{1\leq j\leq i}U_{\widetilde{K}_i}^{\tau^{-1}\sigma_j\tau-1}}
\ar[r]\ar@/^2.2pc/[l]^{(h_i^{(L/K)})^\tau=h_i^\tau} & 1
}
\end{equation*}
commutative. Thus, it follows that, there exists a map
\begin{equation*}
g_i^\tau :\prod_{1\leq j\leq i}U_{\widetilde{K}_i}^{\tau^{-1}\sigma_j\tau-1}
\rightarrow\prod_{1\leq j\leq i+1}U_{\widetilde{K}_{i+1}}
^{\tau^{-1}\sigma_j\tau-1}
\end{equation*}
which makes the following diagram
{\scriptsize
\begin{equation*}
\SelectTips{cm}{}\xymatrix{
{} & {} & {\prod_{1\leq j\leq i+1}U_{\widetilde{K}_{i+1}}
^{\tau^{-1}\sigma_j\tau-1}}\ar[dd]^{\pmod{U_{\widetilde{K}_{i+1}}
^{\sigma_{i+1}-1}}} & {} \\
{} & {} & {} & {\prod_{1\leq j\leq i+1}U_{\widetilde{K}_{i+1}}^{\sigma_j-1}}
\ar[dd]^{\pmod{U_{\widetilde{K}_{i+1}}^{\sigma_{i+1}-1}}}
\ar@/_2.2pc/[ul]_\tau \\
{\prod_{1\leq j\leq i}U_{\widetilde{K}_i}^{\tau^{-1}\sigma_j\tau-1}}
\ar[rr]^{h_i^\tau}\ar@/^2.2pc/[uurr]^{g_i^\tau} & {} & 
{\left(\prod_{1\leq j\leq i+1}
U_{\widetilde{K}_{i+1}}^{\tau^{-1}\sigma_j\tau-1}\right)/
U_{\widetilde{K}_{i+1}}^{\sigma_{i+1}-1}} & {} \\
{} & {\prod_{1\leq j\leq i}U_{\widetilde{K}_i}^{\sigma_j-1}}
\ar[rr]^{h_i}\ar@/_2.2pc/[uurr]_{g_i}\ar@/_2.2pc/[ul]_-{\tau} & {} & 
{\left(\prod_{1\leq j\leq i+1}U_{\widetilde{K}_{i+1}}^{\sigma_j-1}\right)/
U_{\widetilde{K}_{i+1}}^{\sigma_{i+1}-1}}\ar@/_2.2pc/[ul]_-{\tau}
}
\end{equation*}
}
commutative. Now, for every $1\leq i\in\mathbb Z$, choose a mapping
\begin{equation*}
f_i^\tau :U_{\widetilde{K}_i}^{\sigma_i-1}\rightarrow 
U_{\widetilde{\mathbb X}(L/K)}
\end{equation*}
which satisfies for each $j\in\mathbb Z_{>i}$ the equality
\begin{equation*}
\text{Pr}_{\widetilde{K}_j}\circ f_i^\tau=(g_{j-1}^\tau\circ\cdots
\circ g_i^\tau)\mid_{U_{\widetilde{K}_i}^{\sigma_i-1}}.
\end{equation*}
Thus, for $j\in\mathbb Z_{>i}$, and for 
$\alpha\in U_{\widetilde{K}_i}^{\sigma_i-1}$,
\begin{equation*}
\text{Pr}_{\widetilde{K}_j}\circ f_i^\tau(\alpha)=
\left(\text{Pr}_{\widetilde{K}_j}\circ f_i(\alpha^{\tau^{-1}})\right)^\tau ,
\end{equation*}
which yields the following relationship
\begin{equation}
\label{tau-relationship}
f_i^\tau(\alpha)=f_i\left(\alpha^{\tau^{-1}}\right)^\tau ,
\end{equation}
for every $\alpha\in U_{\widetilde{K}_i}^{\sigma_i-1}$.
After all these observations, an immediate consequence of Lemma 
\ref{gal-action-Z} is the following corollary.
\begin{corollary}
\label{gal-module-Z}
For $\tau\in\text{Gal}(L/K)$, 
\begin{equation*}
Z_{L/K}(\{K_i,f_i\})^\tau =Z_{L/K}(\{K_i,f_i^\tau\}).
\end{equation*}
\end{corollary}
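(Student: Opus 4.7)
The plan is to prove equality by showing each element of one side, written as a convergent infinite product of elements from the $\mathrm{im}(f_i)$'s (respectively $\mathrm{im}(f_i^\tau)$'s), can be rewritten as a similar product on the other side via the transformation rule (\ref{tau-relationship}). The key ingredients that will carry the proof are exactly equation (\ref{tau-relationship}), Lemma \ref{gal-action-Z}, and the continuity of the $\mathrm{Gal}(L/K)$-action on $U_{\widetilde{\mathbb X}(L/K)}^\diamond$ (so that $\tau$ commutes with convergent infinite products).

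First I would establish the inclusion $Z_{L/K}(\{K_i,f_i\})^\tau \subseteq Z_{L/K}(\{K_i,f_i^\tau\})$. Pick $z\in Z_{L/K}(\{K_i,f_i\})$ and write $z=\prod_{i\geq 1} z^{(i)}$ with $z^{(i)}=f_i(\alpha^{(i)})$ for some $\alpha^{(i)}\in U_{\widetilde{K}_i}^{\sigma_i-1}$; by continuity of the Galois action,
\[
z^\tau=\prod_{i\geq 1}(z^{(i)})^\tau=\prod_{i\geq 1}f_i(\alpha^{(i)})^\tau.
\]
Equation (\ref{tau-relationship}), applied with input $(\alpha^{(i)})^\tau$, reads $f_i^\tau((\alpha^{(i)})^\tau)=f_i(\alpha^{(i)})^\tau$; moreover Lemma \ref{gal-action-Z} guarantees that $(\alpha^{(i)})^\tau\in U_{\widetilde{K}_i}^{\sigma_i-1}$, so the expression on the left makes sense and lies in $\mathrm{im}(f_i^\tau)$. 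Hence every factor of $z^\tau$ belongs to $\mathrm{im}(f_i^\tau)$, which shows $z^\tau\in Z_{L/K}(\{K_i,f_i^\tau\})$.

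The reverse inclusion $Z_{L/K}(\{K_i,f_i^\tau\})\subseteq Z_{L/K}(\{K_i,f_i\})^\tau$ follows by the symmetric argument applied with $\tau^{-1}$. Given $w=\prod_i w^{(i)}\in Z_{L/K}(\{K_i,f_i^\tau\})$ with $w^{(i)}=f_i^\tau(\beta^{(i)})$ for $\beta^{(i)}\in U_{\widetilde{K}_i}^{\sigma_i-1}$, the identity (\ref{tau-relationship}) gives $w^{(i)}=f_i((\beta^{(i)})^{\tau^{-1}})^\tau$, and Lemma \ref{gal-action-Z} (with $\tau^{-1}$ in place of $\tau$) places $(\beta^{(i)})^{\tau^{-1}}$ inside $U_{\widetilde{K}_i}^{\sigma_i-1}$; therefore $w^{\tau^{-1}}=\prod_i f_i((\beta^{(i)})^{\tau^{-1}})\in Z_{L/K}(\{K_i,f_i\})$, so $w\in Z_{L/K}(\{K_i,f_i\})^\tau$.

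The proof is essentially a bookkeeping exercise once (\ref{tau-relationship}) and Lemma \ref{gal-action-Z} are in place; the only mild obstacle is to make sure the $\tau$-action distributes over the infinite product, which holds because $\tau$ acts as a continuous automorphism on $U_{\widetilde{\mathbb X}(L/K)}^\diamond$ and the products in question converge (by Lemma \ref{infinite-product}). No further computations beyond these symbolic manipulations are required.
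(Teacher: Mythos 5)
Your proof is correct and follows essentially the same route as the paper's: the argument rests on the continuity of the $\text{Gal}(L/K)$-action on the infinite products, the relation (\ref{tau-relationship}), and Lemma \ref{gal-action-Z}, exactly as in the paper. The only difference is that you write out the reverse inclusion explicitly by running the same computation with $\tau^{-1}$, whereas the paper records only the inclusion $Z_{L/K}(\{K_i,f_i\})^\tau\subseteq Z_{L/K}(\{K_i,f_i^\tau\})$ and leaves the converse implicit.
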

\begin{proof}
Let $z\in Z_{L/K}(\{K_i,f_i\})$ and choose $z^{(i)}\in\text{im}(f_i)
\subset U_{\widetilde{\mathbb X}(L/K)}$ such that $z=\prod_iz^{(i)}$.
By the continuity of the action of $\text{Gal}(L/K)$ on 
$U_{\widetilde{\mathbb X}(L/K)}$, to prove that 
$z^\tau\in Z_{L/K}(\{K_i,f_i^\tau\})$, it suffices to show that 
$\left(z^{(i)}\right)^\tau\in\text{im}(f_i^\tau)$.
Now, let $\alpha^{(i)}\in U_{\widetilde{K}_i}^{\sigma_i-1}$ such that
$f_i(\alpha^{(i)})=z^{(i)}$. Then, 
$\left(z^{(i)}\right)^\tau=f_i\left(\alpha^{(i)}\right)^\tau
=f_i\left(((\alpha^{(i)})^\tau)^{\tau^{-1}}\right)^\tau
=f_i^\tau\left((\alpha^{(i)})^\tau\right)$ by eq. (\ref{tau-relationship}), 
where by the previous Lemma \ref{gal-action-Z},
$\left(\alpha^{(i)}\right)^\tau\in U_{\widetilde{K}_i}^{\sigma_i-1}$. Thus,
$\left(z^{(i)}\right)^\tau\in\text{im}(f_i^\tau)$.
\end{proof}
\begin{remark}
\label{remark-gal-module-Z}
By pp. 71 of \cite{fesenko-2001}, for $\tau\in\text{Gal}(L/K)$,
$Z_{L/K}\left(\{K_i,f_i\}\right)$ and
$Z_{L/K}\left(\{K_i,f_i^\tau\}\right)$ are algebraically and topologically
isomorphic. Thus, Corollary
\ref{gal-module-Z} indeed defines a continuous action of $\text{Gal}(L/K)$ on
$Z_{L/K}\left(\{K_i,f_i\}\right)$.
\end{remark}
Now, define the topological subgroup $Y_{L/K}\left(\{K_i,f_i\}\right)=Y_{L/K}$ 
of $U_{\widetilde{\mathbb X}(L/K)}^\diamond$ to be
\begin{equation}
\label{definition-of-Y}
Y_{L/K}=
\left\{y\in U_{\widetilde{\mathbb X}(L/K)} : y^{1-\varphi}
\in Z_{L/K}(\{K_i,f_i\})\right\}.
\end{equation}
\begin{lemma}
$Y_{L/K}$ is a topological $\text{Gal}(L/K)$-submodule of
$U_{\widetilde{\mathbb X}(L/K)}^\diamond$.
\end{lemma}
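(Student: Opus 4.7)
The plan is to check four properties of $Y_{L/K}$ in turn: closure under multiplication and inversion, containment in the Fesenko diamond subgroup, topological closedness, and stability under the natural $\text{Gal}(L/K)$-action on $U_{\widetilde{\mathbb X}(L/K)}^\diamond$. The first three are formal consequences of the definition together with Remark \ref{Z-subset-U1} and Lemma \ref{infinite-product}; the substantive step is the last one.

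For the subgroup property, observe that the map $u\mapsto u^{1-\varphi}$ on $U_{\widetilde{\mathbb X}(L/K)}$ is a continuous group homomorphism, and $Z_{L/K}(\{K_i,f_i\})$ is itself a topological (hence closed) subgroup by Lemma \ref{infinite-product}. Hence $Y_{L/K}$ is the preimage of $Z_{L/K}$ under this homomorphism, and so is automatically a closed subgroup of $U_{\widetilde{\mathbb X}(L/K)}$. To see that $Y_{L/K}\subseteq U_{\widetilde{\mathbb X}(L/K)}^\diamond$, invoke Remark \ref{Z-subset-U1}: $Z_{L/K}\subseteq U_{\widetilde{\mathbb X}(L/K)}^1=\ker(\text{Pr}_{\widetilde K})$, so for $y\in Y_{L/K}$ one has $\text{Pr}_{\widetilde K}(y)^{1-\varphi}=\text{Pr}_{\widetilde K}(y^{1-\varphi})=1$; therefore $\text{Pr}_{\widetilde K}(y)$ is a $\varphi$-fixed element of $U_{\widetilde K}$, i.e.\ it lies in $U_K$, which is precisely the diamond condition.

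The crux is Galois-stability. Given $\tau\in\text{Gal}(L/K)$, view $\tau$ as an element of $\text{Gal}(\widetilde L/\widetilde K)$ acting on $U_{\widetilde{\mathbb X}(L/K)}$; I would first check that $\tau$ commutes with $\varphi$ on $\widetilde L$. This is because $L/K$ is totally ramified (so $\tau$ fixes $K^{nr}$ pointwise) and $L\subseteq K_\varphi$ (so $\varphi$ fixes $L$ pointwise); the two automorphisms agree on the generators of the compositum $L^{nr}=LK^{nr}$, hence commute on $\widetilde L$ and therefore on $U_{\widetilde{\mathbb X}(L/K)}$. From this, for any $y\in Y_{L/K}$,
\[
(y^{\tau})^{1-\varphi}\;=\;(y^{1-\varphi})^{\tau}.
\]
Corollary \ref{gal-module-Z} then gives $(y^{1-\varphi})^{\tau}\in Z_{L/K}(\{K_i,f_i\})^{\tau}=Z_{L/K}(\{K_i,f_i^{\tau}\})$, and Remark \ref{remark-gal-module-Z} allows us to identify this element with one of $Z_{L/K}(\{K_i,f_i\})$ via the canonical algebraic and topological isomorphism supplied there. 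Consequently $(y^{\tau})^{1-\varphi}\in Z_{L/K}(\{K_i,f_i\})$, so $y^{\tau}\in Y_{L/K}$, as required; continuity of the $\text{Gal}(L/K)$-action on $Y_{L/K}$ is then inherited from continuity of the ambient action on $U_{\widetilde{\mathbb X}(L/K)}^\diamond$.

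The main obstacle, and the only non-formal point, is the last identification: arguing that the $\text{Gal}(L/K)$-action on $Z_{L/K}(\{K_i,f_i\})$ produced by Remark \ref{remark-gal-module-Z} is genuinely compatible with the ambient Galois action on $U_{\widetilde{\mathbb X}(L/K)}$. Concretely, one needs the isomorphism between $Z_{L/K}(\{K_i,f_i\})$ and $Z_{L/K}(\{K_i,f_i^{\tau}\})$ to be realised by the restriction of $\tau$ itself (rather than by some abstract identification), so that $(y^{1-\varphi})^{\tau}$, viewed inside $U_{\widetilde{\mathbb X}(L/K)}$, actually lies on the nose in $Z_{L/K}(\{K_i,f_i\})$. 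That compatibility is exactly the content of the pp.\ 71 discussion of \cite{fesenko-2001} invoked in the remark; everything else reduces to formal bookkeeping with the defining equation $Y_{L/K}=\{y:y^{1-\varphi}\in Z_{L/K}\}$.
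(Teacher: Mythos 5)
Your proposal is correct and takes essentially the same route as the paper: the key commutation $(y^\tau)^\varphi=(y^\varphi)^\tau$ (because $\tau$ acts on the $K_i$-part while $\varphi$ acts on the $\widetilde K$-part), followed by Corollary \ref{gal-module-Z} and the identification of Remark \ref{remark-gal-module-Z}, whose dependence on the pp.~71 discussion of \cite{fesenko-2001} you rightly flag as the only non-formal point. Your extra verifications --- closedness of $Y_{L/K}$ as the preimage of $Z_{L/K}(\{K_i,f_i\})$ under $u\mapsto u^{1-\varphi}$, and the containment $Y_{L/K}\subseteq U_{\widetilde{\mathbb X}(L/K)}^\diamond$ via the $\varphi$-fixedness of $\text{Pr}_{\widetilde K}(y)$ --- are correct details that the paper leaves implicit in the definition of $Y_{L/K}$.
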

\begin{proof}
Let $\tau\in\text{Gal}(L/K)$ and $y\in Y_{L/K}$. Note that,
$(y^\tau)^\varphi=(y^\varphi)^\tau$, as the action of $\tau$ on 
$y=(u_{\widetilde{K}_i})_{0\leq i\in\mathbb Z}$ is defined by
the action of $\tau$ on the ``$K_i$-part'' of $u_{\widetilde{K}_i}$
for each $0\leq i\in\mathbb Z$, and the action of $\varphi$ on
$y=(u_{\widetilde{K}_i})_{0\leq i\in\mathbb Z}$ is defined by
the action of $\varphi$ on the ``$\widetilde{K}$-part'' of 
$u_{\widetilde{K}_i}$ for each $0\leq i\in\mathbb Z$. Thus,
$\frac{y^\tau}{(y^\tau)^\varphi}=\frac{y^\tau}{(y^\varphi)^\tau}=
\left(\frac{y}{y^\varphi}\right)^\tau\in Z_{L/K}(\{K_i,f_i\})^\tau$.
Now, the proof follows from Corollary \ref{gal-module-Z} and by Remark
\ref{remark-gal-module-Z}.
\end{proof}
\begin{lemma}[Fesenko]
\label{non-abelian-hazewinkel-1}
The mapping
\begin{equation*}
\ell_{L/K}^{(\varphi)}:\text{Gal}(L/K)\rightarrow
U_{\widetilde{\mathbb X}(L/K)}^1/Z_{L/K}(\{K_i,f_i\})
\end{equation*}
defined by
\begin{equation*}
\ell_{L/K}^{(\varphi)} : \sigma\mapsto \Pi_{\varphi ; L/K}^{\sigma -1}.
Z_{L/K}(\{K_i,f_i\}),
\end{equation*}
for every $\sigma\in\text{Gal}(L/K)$, is a group isomorphism, where 
the group operation $\ast$ on
$U_{\widetilde{\mathbb X}(L/K)}^1/Z_{L/K}(\{K_i,f_i\})$ is defined by 
\begin{equation*}
\overline{U}\ast\overline{V}=\overline{U}.
\overline{V}^{(\ell_{L/K}^{(\varphi)})^{-1}(\overline{U})}
\end{equation*}
for every $\overline{U}=U.Z_{L/K}(\{K_i,f_i\}), 
\overline{V}=V.Z_{L/K}(\{K_i,f_i\})\in 
U_{\widetilde{\mathbb X}{(L/K)}}^1/Z_{L/K}(\{K_i,f_i\})$ with $U,V\in
U_{\widetilde{\mathbb X}{(L/K)}}^1$. 
\end{lemma}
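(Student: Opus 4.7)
The plan is as follows. First I would verify that $\ell_{L/K}^{(\varphi)}$ is well-defined as a map into $U_{\widetilde{\mathbb X}(L/K)}^1/Z_{L/K}(\{K_i,f_i\})$: since every $\sigma\in\text{Gal}(L/K)$ fixes $K=K_o$ pointwise and the $\widetilde K$-component of $\Pi_{\varphi;L/K}$ is $\pi_K$, the $\widetilde K$-component of $\Pi_{\varphi;L/K}^{\sigma-1}$ is $\pi_K^{\sigma-1}=1$, and hence $\Pi_{\varphi;L/K}^{\sigma-1}\in U_{\widetilde{\mathbb X}(L/K)}^1$. A direct computation
\[
\Pi_{\varphi;L/K}^{\sigma\tau-1}=\frac{(\Pi_{\varphi;L/K}^\tau)^\sigma}{\Pi_{\varphi;L/K}}=\Pi_{\varphi;L/K}^{\sigma-1}\cdot(\Pi_{\varphi;L/K}^{\tau-1})^\sigma
\]
yields the crossed-homomorphism identity $\ell_{L/K}^{(\varphi)}(\sigma\tau)=\ell_{L/K}^{(\varphi)}(\sigma)\cdot\ell_{L/K}^{(\varphi)}(\tau)^\sigma$; once bijectivity of $\ell_{L/K}^{(\varphi)}$ is established, this identity forces the transported group law on $U_{\widetilde{\mathbb X}(L/K)}^1/Z_{L/K}$ to be exactly the twisted operation $\ast$. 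Thus the content of the lemma is really a bijectivity statement.

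The main step is to factor $\ell_{L/K}^{(\varphi)}$ through the extended Hazewinkel bijection. By Theorem \ref{fundamental-equation}, any $U_\sigma\in U_{\widetilde{\mathbb X}(L/K)}^\diamond$ solving $U_\sigma^{1-\varphi}=\Pi_{\varphi;L/K}^{\sigma-1}$ represents $\phi_{L/K}^{(\varphi)}(\sigma)$ modulo $U_{\mathbb X(L/K)}$. Since $U_{\mathbb X(L/K)}\subseteq Y_{L/K}$ and $(1-\varphi)(Y_{L/K})\subseteq Z_{L/K}$ by the very definition of $Y_{L/K}$, the operator $1-\varphi$ descends to a well-defined group homomorphism
\[
\overline{1-\varphi}\colon U_{\widetilde{\mathbb X}(L/K)}^\diamond/Y_{L/K}\longrightarrow U_{\widetilde{\mathbb X}(L/K)}^1/Z_{L/K},
\]
and by construction $\ell_{L/K}^{(\varphi)}=\overline{1-\varphi}\circ c_{L/K}\circ\phi_{L/K}^{(\varphi)}$. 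Since the text immediately preceding the lemma asserts that $c_{L/K}\circ\phi_{L/K}^{(\varphi)}$ is a bijection with inverse the extended Hazewinkel map, it suffices to prove that $\overline{1-\varphi}$ is bijective.

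Injectivity of $\overline{1-\varphi}$ is immediate from the definition of $Y_{L/K}$: if $u\in U_{\widetilde{\mathbb X}(L/K)}^\diamond$ satisfies $u^{1-\varphi}\in Z_{L/K}$, then $u\in Y_{L/K}$. For surjectivity, given $v\in U_{\widetilde{\mathbb X}(L/K)}^1$, I would solve $u^{1-\varphi}=v$ for some $u\in U_{\widetilde{\mathbb X}(L/K)}^1$ by successive approximation in the $T$-adic filtration, using the isomorphism $\widetilde{\mathbb X}(L/K)\simeq\mathbb F_p^{sep}((T))$: at each graded step one is reduced to an Artin--Schreier-type equation $\bar u-\bar u^q=\bar v$ that is solvable in the algebraically closed residue field $\mathbb F_p^{sep}$, which is exactly the solvability mechanism underlying Theorem \ref{fundamental-equation}. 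Because $U_{\widetilde{\mathbb X}(L/K)}^1\subseteq U_{\widetilde{\mathbb X}(L/K)}^\diamond$ (since $1\in U_K$), such a $u$ automatically lies in $U_{\widetilde{\mathbb X}(L/K)}^\diamond$, and hence $u\cdot Y_{L/K}$ is the sought pre-image of $v\cdot Z_{L/K}$. This closes the bijection.

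The hardest part will be this surjectivity of $\overline{1-\varphi}$: one must solve $u^{1-\varphi}=v$ while keeping $u$ inside $U_{\widetilde{\mathbb X}(L/K)}^\diamond$. Here the difficulty is mitigated because the hypothesis $v\in U^1$ already forces the Teichmüller part of $u$ to be Frobenius-invariant, so $u\in U^1\subseteq U^\diamond$ for free; the remaining bookkeeping for the Galois action on $Z_{L/K}$, which is needed to make sense of $\bar V^{\ell^{-1}(\bar U)}$, is handled via Corollary \ref{gal-module-Z} and Remark \ref{remark-gal-module-Z}.
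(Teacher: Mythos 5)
There is a genuine gap, and it is structural: your main step is circular relative to the paper's (and Fesenko's) logical development. You factor $\ell_{L/K}^{(\varphi)}=\overline{1-\varphi}\circ c_{L/K}\circ\phi_{L/K}^{(\varphi)}$ (this factorization is correct, since $U_\sigma^{1-\varphi}=\Pi_{\varphi;L/K}^{\sigma-1}$ and $Y_{L/K}^{1-\varphi}\subseteq Z_{L/K}$), and then you import the bijectivity of $c_{L/K}\circ\phi_{L/K}^{(\varphi)}$ from ``the text immediately preceding the lemma''. But that passage is only an announcement of what is to be constructed and proved: in the paper the bijectivity of $\Phi_{L/K}^{(\varphi)}=c_{L/K}\circ\phi_{L/K}^{(\varphi)}$ is established only afterwards, in Lemma \ref{non-abelian-hazewinkel-2}, Lemma \ref{inverses} and Theorem \ref{cocycle2}, and every one of those arguments invokes Lemma \ref{non-abelian-hazewinkel-1} (together with Theorem \ref{fundamental-equation}). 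Indeed, since your $\overline{1-\varphi}$ is injective by the very definition of $Y_{L/K}$, bijectivity of $\ell_{L/K}^{(\varphi)}$ and bijectivity of $\Phi_{L/K}^{(\varphi)}$ are essentially equivalent statements through your factorization; so you have reduced the lemma to a statement that the paper derives \emph{from} the lemma. The genuinely hard content — that $\Pi_{\varphi;L/K}^{\sigma-1}\in Z_{L/K}(\{K_i,f_i\})$ forces $\sigma=1$, and that every element of $U^1_{\widetilde{\mathbb X}(L/K)}$ is congruent modulo $Z_{L/K}(\{K_i,f_i\})$ to some $\Pi_{\varphi;L/K}^{\sigma-1}$ — is exactly what makes the specific construction of $Z_{L/K}$ (the splittings $h_k$, the maps $f_i$, and the hypothesis $\pmb{\mu}_p(K^{sep})\subset K$) necessary, and your proposal never engages it; no amount of formal manipulation of $1-\varphi$ and Theorem \ref{fundamental-equation} can produce it. (The paper itself gives no proof of this lemma, citing Fesenko; the proof is the main technical theorem of \cite{fesenko-2001, fesenko-2005}.)

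Two smaller points. The part of your argument that is sound and genuinely useful is the bijectivity of $\overline{1-\varphi}\colon U_{\widetilde{\mathbb X}(L/K)}^\diamond/Y_{L/K}\to U^1_{\widetilde{\mathbb X}(L/K)}/Z_{L/K}$: injectivity is the definition of $Y_{L/K}$, and surjectivity follows because $u^{1-\varphi}=v$ is solvable coordinatewise (the same successive-approximation/compactness mechanism as in Theorem \ref{fundamental-equation}), and any solution automatically lies in $U^\diamond_{\widetilde{\mathbb X}(L/K)}$ since $\mathrm{Pr}_{\widetilde K}(u)^{1-\varphi}=\mathrm{Pr}_{\widetilde K}(v)=1$ gives $\mathrm{Pr}_{\widetilde K}(u)\in U_K$. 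Note, however, that your justification of this last point conflates two meanings of $U^1$: in this paper $U^1_{\widetilde{\mathbb X}(L/K)}$ is $\ker(\mathrm{Pr}_{\widetilde K})$, not the group of principal units for the $T$-adic filtration, so the ``Frobenius-invariant Teichm\"uller part'' argument is beside the point; the projection argument above is what is needed. With $\overline{1-\varphi}$ bijective, your factorization shows the lemma is equivalent to Theorem \ref{cocycle2}, but to prove either one you must supply Fesenko's actual argument for the exactness of the fundamental sequence, which is missing here.
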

Now, introduce the \textit{fundamental exact sequence} as
\begin{equation*}
1\rightarrow\text{Gal}(L/K)\xrightarrow{\ell_{L/K}^{(\varphi)}}
U_{\widetilde{\mathbb X}(L/K)}/Z_{L/K}(\{K_i,f_i\})
\xrightarrow{\text{Pr}_{\widetilde K}} U_{\widetilde K}\rightarrow 1
\end{equation*}
as a generalization of Serre short exact sequence 
(cf. eq.s (\ref{serre1}) and (\ref{serre2})). Thus, for any 
$U\in U_{\widetilde{\mathbb X}{(L/K)}}^\diamond$, as 
$U^{1-\varphi}\in U_{\widetilde{\mathbb X}{(L/K)}}^1$, there exists a 
unique $\sigma_U\in\text{Gal}(L/K)$ satisfying
\begin{equation}
\label{non-abelian-hazewinkel-3/2}
U^{1-\varphi}.Z_{L/K}(\{K_i,f_i\})=\ell_{L/K}^{(\varphi)}(\sigma_U),
\end{equation} 
by Lemma \ref{non-abelian-hazewinkel-1}. Now, define the arrow
\begin{equation}
\label{non-abelian-hazewinkel-3/2+e}
H_{L/K}^{(\varphi)} : U_{\widetilde{\mathbb X}(L/K)}^\diamond/Y_{L/K}
\rightarrow\text{Gal}(L/K)
\end{equation}
by
\begin{equation}
\label{non-abelian-hazewinkel-3/2+e-def}
H_{L/K}^{(\varphi)} : U.Y_{L/K}\mapsto \sigma_U ,
\end{equation}
for every $U\in U_{\widetilde{\mathbb X}(L/K)}^\diamond$. This arrow is 
clearly a well-defined mapping. In fact, suppose that 
$U,V\in U_{\widetilde{\mathbb X}(L/K)}^\diamond$ satisfy
$U\equiv V\pmod{Y_{L/K}}$. Then $\sigma_U=\sigma_V$. In fact, let 
$Y\in Y_{L/K}$ such that $U=V.Y$. The definition of $Y_{L/K}$ given in 
eq. (\ref{definition-of-Y}) forces that
$Y^{1-\varphi}\in Z_{L/K}(\{K_i,f_i\})$. Thus, the equalities $U^{1-\varphi}
=(V.Y)^{1-\varphi}=V^{1-\varphi}Y^{1-\varphi}$ yield 
$U^{1-\varphi}Z_{L/K}(\{K_i,f_i\})=V^{1-\varphi}Z_{L/K}(\{K_i,f_i\})$, which
shows that 
$\ell_{L/K}^{(\varphi)}(\sigma_U)=\ell_{L/K}^{(\varphi)}(\sigma_V)$ by
eq. (\ref{non-abelian-hazewinkel-3/2}). 
Now, by Lemma \ref{non-abelian-hazewinkel-1}, it follows that 
$\sigma_U=\sigma_V$.
\begin{lemma}
\label{non-abelian-hazewinkel-2}
Suppose that the local field $K$ satisfies the condition given in eq. 
(\ref{rootofunity}). The arrow
\begin{equation*}
H_{L/K}^{(\varphi)} : U_{\widetilde{\mathbb X}(L/K)}^\diamond/Y_{L/K}
\rightarrow\text{Gal}(L/K)
\end{equation*}
defined for the extension $L/K$ is a bijection.
\end{lemma}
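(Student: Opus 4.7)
The plan is to deduce the bijectivity of $H_{L/K}^{(\varphi)}$ as an essentially formal consequence of two results already in hand: Theorem~\ref{fundamental-equation} (solvability of $U^{1-\varphi}=\Pi_{\varphi;L/K}^{\sigma-1}$ inside $U_{\widetilde{\mathbb X}(L/K)}^\diamond$) and Lemma~\ref{non-abelian-hazewinkel-1} (the mapping $\ell_{L/K}^{(\varphi)}$ is an isomorphism, where the hypothesis $\pmb{\mu}_p(K^{sep})\subset K$ from eq.~(\ref{rootofunity}) is used). Everything else in the proof is definition-chasing against eq.~(\ref{non-abelian-hazewinkel-3/2}) and eq.~(\ref{definition-of-Y}).

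For surjectivity, I would take an arbitrary $\sigma\in\text{Gal}(L/K)$ and apply Theorem~\ref{fundamental-equation} to obtain $U_\sigma\in U_{\widetilde{\mathbb X}(L/K)}^\diamond$ with $U_\sigma^{1-\varphi}=\Pi_{\varphi;L/K}^{\sigma-1}$. Reducing both sides modulo $Z_{L/K}(\{K_i,f_i\})$ and comparing with the explicit formula for $\ell_{L/K}^{(\varphi)}$ in Lemma~\ref{non-abelian-hazewinkel-1} gives
\begin{equation*}
U_\sigma^{1-\varphi}\cdot Z_{L/K}(\{K_i,f_i\})=\Pi_{\varphi;L/K}^{\sigma-1}\cdot Z_{L/K}(\{K_i,f_i\})=\ell_{L/K}^{(\varphi)}(\sigma).
\end{equation*}
Since $\ell_{L/K}^{(\varphi)}$ is an isomorphism, comparison with eq.~(\ref{non-abelian-hazewinkel-3/2}) yields $\sigma_{U_\sigma}=\sigma$, whence $H_{L/K}^{(\varphi)}(U_\sigma\cdot Y_{L/K})=\sigma$.

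For injectivity, I would suppose $U,V\in U_{\widetilde{\mathbb X}(L/K)}^\diamond$ satisfy $H_{L/K}^{(\varphi)}(U\cdot Y_{L/K})=H_{L/K}^{(\varphi)}(V\cdot Y_{L/K})$, i.e.\ $\sigma_U=\sigma_V$. Then eq.~(\ref{non-abelian-hazewinkel-3/2}) gives $U^{1-\varphi}\cdot Z_{L/K}(\{K_i,f_i\})=V^{1-\varphi}\cdot Z_{L/K}(\{K_i,f_i\})$, so the quotient $UV^{-1}$ satisfies $(UV^{-1})^{1-\varphi}\in Z_{L/K}(\{K_i,f_i\})$. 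The element $UV^{-1}$ belongs to $U_{\widetilde{\mathbb X}(L/K)}$, so by the defining condition eq.~(\ref{definition-of-Y}) of $Y_{L/K}$ one concludes $UV^{-1}\in Y_{L/K}$, hence $U\cdot Y_{L/K}=V\cdot Y_{L/K}$.

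I do not anticipate a genuine obstacle here: the substantive analytic and cohomological content (the Hilbert~90 style solvability of $U^{1-\varphi}=\Pi^{\sigma-1}$ and the bijectivity of $\ell_{L/K}^{(\varphi)}$) has already been absorbed into Theorem~\ref{fundamental-equation} and Lemma~\ref{non-abelian-hazewinkel-1}, and the assumption on $\pmb{\mu}_p$ is inherited through those. The only mildly non-formal point to verify is that the diamond condition is preserved under quotients, which is immediate from the multiplicativity of $\text{Pr}_{\widetilde K}$.
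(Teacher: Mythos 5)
Your proposal is correct and follows essentially the same route as the paper's own proof: injectivity by passing from $\sigma_U=\sigma_V$ through eq.~(\ref{non-abelian-hazewinkel-3/2}) to $(V^{-1}U)^{1-\varphi}\in Z_{L/K}(\{K_i,f_i\})$ and then invoking eq.~(\ref{definition-of-Y}), and surjectivity by solving $U^{1-\varphi}=\Pi_{\varphi;L/K}^{\sigma-1}$ via Theorem~\ref{fundamental-equation} and concluding $\sigma_{U_\sigma}=\sigma$ from the injectivity of $\ell_{L/K}^{(\varphi)}$ in Lemma~\ref{non-abelian-hazewinkel-1}. No gaps to report.
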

\begin{proof}
Choose $U,V\in U_{\widetilde{\mathbb X}(L/K)}^\diamond$ satisfying
$H_{L/K}^{(\varphi)}(U.Y_{L/K})=H_{L/K}^{(\varphi)}(V.Y_{L/K})$. 
Thus, $\sigma_U=\sigma_V$ by the definition eq. 
(\ref{non-abelian-hazewinkel-3/2+e-def}) of the arrow given in eq. 
(\ref{non-abelian-hazewinkel-3/2+e}). 
Now, eq. (\ref{non-abelian-hazewinkel-3/2}) yields
\begin{equation*}
U^{1-\varphi}.Z_{L/K}(\{K_i,f_i\})=V^{1-\varphi}.Z_{L/K}(\{K_i,f_i\}),
\end{equation*}
which proves that $(V^{-1}U)^{1-\varphi}\in Z_{L/K}(\{K_i,f_i\})$.
The equality 
$U.Y_{L/K}=V.Y_{L/K}$ follows immediately by eq. (\ref{definition-of-Y}).
Now, choose any $\sigma\in\text{Gal}(L/K)$. 
By Theorem \ref{fundamental-equation}, there exists 
$U\in U_{\widetilde{\mathbb X}(L/K)}^{\diamond}$ which is unique modulo
$U_{\mathbb X(L/K)}$ (so unique modulo $Y_{L/K}$ as 
$U_{\mathbb X(L/K)}\subseteq Y_{L/K}$), such that
\begin{equation*}
\Pi_{\varphi;L/K}^{\sigma-1}.Z_{L/K}(\{K_i,f_i\})
=U^{1-\varphi}.Z_{L/K}(\{K_i,f_i\}).
\end{equation*}
Thus, by Theorem \ref{non-abelian-hazewinkel-1} and by 
eq. (\ref{non-abelian-hazewinkel-3/2}),
\begin{equation*}
\ell_{L/K}^{(\varphi)}(\sigma)=\ell_{L/K}^{(\varphi)}(\sigma_U),
\end{equation*}
which forces the equality $\sigma=\sigma_U$ for
$U\in U_{\widetilde{\mathbb X}(L/K)}^\diamond$.
\end{proof}
Now, consider the composition of the arrows
\begin{equation}
\Phi_{L/K}^{(\varphi)}:\text{Gal}(L/K)\xrightarrow{\phi_{L/K}^{(\varphi)}} 
U_{\widetilde{\mathbb X}{(L/K)}}^\diamond/U_{\mathbb X(L/K)}
\xrightarrow{c_{L/K}}
U_{\widetilde{\mathbb X}{(L/K)}}^\diamond/Y_{L/K} .
\end{equation}
\begin{lemma}
\label{inverses}
There are the following equalities.
\begin{itemize}
\item[(i)]
$U_{\sigma_U}.Y_{L/K}=U.Y_{L/K}$ for every 
$U\in U_{\widetilde{\mathbb X}(L/K)}^\diamond$;
\item[(ii)]
$\sigma_{U_\sigma}=\sigma$ for every $\sigma\in\text{Gal}(L/K)$.
\end{itemize}
\end{lemma}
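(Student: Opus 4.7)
The plan is to deduce both parts directly by unwinding the two definitions that are in play: the definition of $U_\sigma$ via Theorem~\ref{fundamental-equation} (namely $U_\sigma^{1-\varphi}=\Pi_{\varphi;L/K}^{\sigma-1}$), and the definition of $\sigma_U$ via eq.~(\ref{non-abelian-hazewinkel-3/2}) (namely $U^{1-\varphi}\cdot Z_{L/K}(\{K_i,f_i\})=\ell_{L/K}^{(\varphi)}(\sigma_U)=\Pi_{\varphi;L/K}^{\sigma_U-1}\cdot Z_{L/K}(\{K_i,f_i\})$). Because both maps are essentially encoded through the operator $1-\varphi$, the two statements should drop out as one-line computations using the injectivity already established for $\ell_{L/K}^{(\varphi)}$ in Lemma~\ref{non-abelian-hazewinkel-1} and the fact (Theorem~\ref{fundamental-equation}) that $U_\sigma$ is determined modulo $U_{\mathbb X(L/K)}\subseteq Y_{L/K}$.

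For part (ii), I would start with an arbitrary $\sigma\in\text{Gal}(L/K)$ and form $U_\sigma$. Applying the defining relation of $\sigma_{U_\sigma}$ to $U=U_\sigma$ gives
\[
\Pi_{\varphi;L/K}^{\sigma-1}\cdot Z_{L/K}(\{K_i,f_i\})
=U_\sigma^{1-\varphi}\cdot Z_{L/K}(\{K_i,f_i\})
=\Pi_{\varphi;L/K}^{\sigma_{U_\sigma}-1}\cdot Z_{L/K}(\{K_i,f_i\}).
\]
This is precisely the equality $\ell_{L/K}^{(\varphi)}(\sigma)=\ell_{L/K}^{(\varphi)}(\sigma_{U_\sigma})$, so the injectivity asserted in Lemma~\ref{non-abelian-hazewinkel-1} forces $\sigma=\sigma_{U_\sigma}$.

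For part (i), I would fix $U\in U_{\widetilde{\mathbb X}(L/K)}^\diamond$ and consider $U_{\sigma_U}$, which by Theorem~\ref{fundamental-equation} satisfies $U_{\sigma_U}^{1-\varphi}=\Pi_{\varphi;L/K}^{\sigma_U-1}$. Combined with the definition of $\sigma_U$ this produces
\[
U^{1-\varphi}\cdot Z_{L/K}(\{K_i,f_i\})=\Pi_{\varphi;L/K}^{\sigma_U-1}\cdot Z_{L/K}(\{K_i,f_i\})=U_{\sigma_U}^{1-\varphi}\cdot Z_{L/K}(\{K_i,f_i\}),
\]
hence $(U_{\sigma_U}U^{-1})^{1-\varphi}\in Z_{L/K}(\{K_i,f_i\})$. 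Since $U_{\sigma_U}U^{-1}\in U_{\widetilde{\mathbb X}(L/K)}$, the definition~(\ref{definition-of-Y}) of $Y_{L/K}$ immediately gives $U_{\sigma_U}U^{-1}\in Y_{L/K}$, i.e.\ $U_{\sigma_U}\cdot Y_{L/K}=U\cdot Y_{L/K}$.

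I do not expect any serious obstacle here, as the lemma is essentially bookkeeping that packages Theorem~\ref{fundamental-equation}, Lemma~\ref{non-abelian-hazewinkel-1}, and the definition of $Y_{L/K}$ together. The only mild point to watch is ensuring that when comparing cosets modulo $Z_{L/K}(\{K_i,f_i\})$ and then lifting to cosets modulo $Y_{L/K}$, one uses the defining property $Y_{L/K}=\{y\in U_{\widetilde{\mathbb X}(L/K)}:y^{1-\varphi}\in Z_{L/K}(\{K_i,f_i\})\}$ correctly; this is what converts the $Z_{L/K}(\{K_i,f_i\})$-congruence in (i) into a $Y_{L/K}$-congruence and is implicitly what makes the inversion $H_{L/K}^{(\varphi)}\circ\Phi_{L/K}^{(\varphi)}=\mathrm{id}$ work, preparing the way for the bijectivity statement that motivated the introduction of $Y_{L/K}$ in the first place.
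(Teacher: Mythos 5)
Your proposal is correct and follows essentially the same route as the paper: both parts are obtained by combining the defining relation $U_\sigma^{1-\varphi}=\Pi_{\varphi;L/K}^{\sigma-1}$ from Theorem \ref{fundamental-equation} with eq. (\ref{non-abelian-hazewinkel-3/2}), then invoking the injectivity of $\ell_{L/K}^{(\varphi)}$ for (ii) and the definition (\ref{definition-of-Y}) of $Y_{L/K}$ (with $U_{\mathbb X(L/K)}\subseteq Y_{L/K}$ handling the ambiguity in $U_{\sigma_U}$) for (i). Your explicit step $(U_{\sigma_U}U^{-1})^{1-\varphi}\in Z_{L/K}(\{K_i,f_i\})$ merely spells out what the paper leaves implicit.
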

\begin{proof}
To prove (i),
let $U\in U_{\widetilde{\mathbb X}(L/K)}^\diamond$. Then, by
eq. (\ref{non-abelian-hazewinkel-3/2}), there 
exists a unique $\sigma_U\in\text{Gal}(L/K)$ satisfying
\begin{equation}
\label{non-abelian-hazewinkel-bijection-1}
U^{1-\varphi}.Z_{L/K}(\{K_i,f_i\})=\ell_{L/K}^{(\varphi)}(\sigma_U)=
\Pi_{\varphi;L/K}^{\sigma_U-1}.Z_{L/K}(\{K_i,f_i\}).
\end{equation}
The equality on the right-hand-side follows from the definition of the
mapping
$\ell_{L/K}^{(\varphi)}:\text{Gal}(L/K)\rightarrow 
U_{\widetilde{\mathbb X}(L/K)}^1/Z_{L/K}(\{K_i,f_i\})$
given in Lemma \ref{non-abelian-hazewinkel-1}. Now, by 
Lemma \ref{fundamental-equation}, for this $\sigma_U\in\text{Gal}(L/K)$,
there exists 
$U_{\sigma_U}\in U_{\widetilde{\mathbb X}(L/K)}^\diamond$, which is unique
modulo $U_{\mathbb X(L/K)}$, satisfying 
\begin{equation*}
U_{\sigma_U}^{1-\varphi}=\Pi_{\varphi;L/K}^{\sigma_U-1} .
\end{equation*}
Thus,
\begin{equation*}
U_{\sigma_U}^{1-\varphi}.Z_{L/K}(\{K_i,f_i\})=
U^{1-\varphi}.Z_{L/K}(\{K_i,f_i\}),
\end{equation*}
by eq. (\ref{non-abelian-hazewinkel-bijection-1}), which proves that
\begin{equation}
\label{non-abelian-hazewinkel-bijection-i}
U_{\sigma_U}.Y_{L/K}=U.Y_{L/K},
\end{equation}
by the definition of $Y_{L/K}$ given in eq. (\ref{definition-of-Y}).
Moreover, as $U_{\mathbb X(L/K)}\subseteq Y_{L/K}$, this equality
(\ref{non-abelian-hazewinkel-bijection-i})
does not depend on the choice of $U_{\sigma_U}$ modulo $U_{\mathbb X(L/K)}$.
Now, for (ii),
let $\sigma\in\text{Gal}(L/K)$. By Lemma \ref{fundamental-equation}, there
exists $U_\sigma\in U_{\widetilde{\mathbb X}(L/K)}^\diamond$, which is unique
modulo $U_{\mathbb X(L/K)}$, such that
\begin{equation}
\label{non-abelian-hazewinkel-bijection-2}
U_{\sigma}^{1-\varphi}=\Pi_{\varphi;L/K}^{\sigma-1} .
\end{equation}
For any such $U_\sigma\in U_{\widetilde{\mathbb X}(L/K)}^\diamond$, there 
exists a unique $\sigma_{U_\sigma}\in\text{Gal}(L/K)$ satisfying
\begin{equation*}
U_\sigma^{1-\varphi}.Z_{L/K}(\{K_i,f_i\})
=\ell_{L/K}^{(\varphi)}(\sigma_{U_\sigma})
\end{equation*}
by eq. (\ref{non-abelian-hazewinkel-3/2}).
Thus, by eq. (\ref{non-abelian-hazewinkel-bijection-2}) and Lemma
\ref{non-abelian-hazewinkel-1}, it follows that
\begin{equation*}
\ell_{L/K}^{(\varphi)}(\sigma_{U_\sigma})=\Pi_{\varphi;L/K}^{\sigma-1}.
Z_{L/K}(\{K_i,f_i\})=\ell_{L/K}^{(\varphi)}(\sigma),
\end{equation*}
which proves that $\sigma_{U_\sigma}=\sigma$.
\end{proof}
Lemma \ref{inverses} immediately yields
\begin{equation*}
H_{L/K}^{(\varphi)}\circ\Phi_{L/K}^{(\varphi)}=\text{id}_{\text{Gal}(L/K)};
\end{equation*}
and
\begin{equation*}
\Phi_{L/K}^{(\varphi)}\circ H_{L/K}^{(\varphi)}=
\text{id}_{U_{\widetilde{\mathbb X}(L/K)}^\diamond/Y_{L/K}}.
\end{equation*}
The following theorem follows from Lemma \ref{non-abelian-hazewinkel-2},
Lemma \ref{inverses}, Theorem \ref{cocycle1} and from the fact that 
$U_{\mathbb X(L/K)}$ is a topological $\text{Gal}(L/K)$-submodule 
of $Y_{L/K}$.
\begin{theorem}[Fesenko]
\label{cocycle2}
Suppose that the local field $K$ satisfies the condition given in eq. 
(\ref{rootofunity}). The mapping
\begin{equation*}
\Phi_{L/K}^{(\varphi)}:\text{Gal}(L/K)\rightarrow 
U_{\widetilde{\mathbb X}{(L/K)}}^\diamond/Y_{L/K}
\end{equation*}
defined for the extension $L/K$
is a bijection with the inverse 
\begin{equation*}
H_{L/K}^{(\varphi)}:U_{\widetilde{\mathbb X}{(L/K)}}^\diamond/Y_{L/K}
\rightarrow\text{Gal}(L/K).
\end{equation*} 
For every $\sigma,\tau\in\text{Gal}(L/K)$,
\begin{equation}
\Phi_{L/K}^{(\varphi)}(\sigma\tau)=\Phi_{L/K}^{(\varphi)}(\sigma)
\Phi_{L/K}^{(\varphi)}(\tau)^\sigma
\end{equation}
co-cycle condition is satisfied. 
\end{theorem}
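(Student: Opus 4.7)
The plan is to assemble the statement from pieces already in hand: the bijectivity follows formally from Lemma \ref{inverses} (which makes $H_{L/K}^{(\varphi)}$ a two-sided inverse of $\Phi_{L/K}^{(\varphi)}$), while the cocycle identity is obtained by pushing forward the cocycle relation of Theorem \ref{cocycle1} along the canonical projection
\begin{equation*}
c_{L/K}:U_{\widetilde{\mathbb X}(L/K)}^\diamond/U_{\mathbb X(L/K)}\longrightarrow
U_{\widetilde{\mathbb X}(L/K)}^\diamond/Y_{L/K}.
\end{equation*}

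First, for the bijection, I would simply quote Lemma \ref{inverses}. Part (i) gives $U_{\sigma_U}.Y_{L/K}=U.Y_{L/K}$, which unwinds to $\Phi_{L/K}^{(\varphi)}\circ H_{L/K}^{(\varphi)} =\operatorname{id}_{U_{\widetilde{\mathbb X}(L/K)}^\diamond/Y_{L/K}}$ after noting that $\phi_{L/K}^{(\varphi)}(\sigma_U)=U_{\sigma_U}.U_{\mathbb X(L/K)}$ and that $U_{\mathbb X(L/K)}\subseteq Y_{L/K}$ makes the choice of representative $U_{\sigma_U}$ harmless. Part (ii), $\sigma_{U_\sigma}=\sigma$, is exactly $H_{L/K}^{(\varphi)}\circ\Phi_{L/K}^{(\varphi)}=\operatorname{id}_{\operatorname{Gal}(L/K)}$. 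Hence $\Phi_{L/K}^{(\varphi)}$ and $H_{L/K}^{(\varphi)}$ are mutually inverse bijections; this also gives an independent confirmation of Lemma \ref{non-abelian-hazewinkel-2}.

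Next, for the cocycle identity, the key observation is that $c_{L/K}$ is a $\operatorname{Gal}(L/K)$-equivariant group homomorphism of topological $\operatorname{Gal}(L/K)$-modules. Indeed, $U_{\widetilde{\mathbb X}(L/K)}^\diamond$ is an abelian topological group acted on continuously by $\operatorname{Gal}(L/K)$, the subgroup $U_{\mathbb X(L/K)}$ is Galois-stable, and by the preceding lemma $Y_{L/K}$ is a topological $\operatorname{Gal}(L/K)$-submodule containing $U_{\mathbb X(L/K)}$. Therefore applying $c_{L/K}$ to both sides of the relation of Theorem \ref{cocycle1},
\begin{equation*}
\phi_{L/K}^{(\varphi)}(\sigma\tau)=\phi_{L/K}^{(\varphi)}(\sigma)\,\phi_{L/K}^{(\varphi)}(\tau)^{\sigma},
\end{equation*}
and using $\Phi_{L/K}^{(\varphi)}=c_{L/K}\circ\phi_{L/K}^{(\varphi)}$ together with the Galois-equivariance $c_{L/K}(x^\sigma)=c_{L/K}(x)^\sigma$, yields the asserted cocycle identity
\begin{equation*}
\Phi_{L/K}^{(\varphi)}(\sigma\tau)=\Phi_{L/K}^{(\varphi)}(\sigma)\,\Phi_{L/K}^{(\varphi)}(\tau)^{\sigma}.
\end{equation*}

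There is essentially no hard step here: every ingredient has already been assembled earlier in the section. The only point worth checking carefully is that the Galois action on the quotient $U_{\widetilde{\mathbb X}(L/K)}^\diamond/Y_{L/K}$ is well defined and that the projection $c_{L/K}$ intertwines the two $\operatorname{Gal}(L/K)$-actions; this is immediate once one remarks that $Y_{L/K}$ has been shown to be $\operatorname{Gal}(L/K)$-stable. The only subtlety one might expect—namely whether a genuine \emph{group} structure on $U_{\widetilde{\mathbb X}(L/K)}^\diamond/Y_{L/K}$ is needed to state the cocycle identity—is a non-issue, since both sides of the identity are just products in the ambient abelian quotient group $U_{\widetilde{\mathbb X}(L/K)}^\diamond/Y_{L/K}$ of $U_{\widetilde{\mathbb X}(L/K)}^\diamond$.
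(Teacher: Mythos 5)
Your proposal is correct and follows essentially the same route as the paper, which deduces the theorem precisely from Lemma \ref{inverses} (making $H_{L/K}^{(\varphi)}$ a two-sided inverse, cf.\ also Lemma \ref{non-abelian-hazewinkel-2}), Theorem \ref{cocycle1}, and the fact that $U_{\mathbb X(L/K)}$ and $Y_{L/K}$ are $\text{Gal}(L/K)$-submodules, so that the cocycle relation pushes forward along the equivariant projection $c_{L/K}$.
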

By Corollary \ref{grouplaw1}, Theorem \ref{cocycle2} has the following
consequence.
\begin{corollary}
\label{grouplaw2}
Define a law of composition $\ast$ on 
$U_{\widetilde{\mathbb X}{(L/K)}}^\diamond/Y_{L/K}$ by
\begin{equation}
\label{star2}
\overline{U}\ast\overline{V}=\overline{U}.
\overline{V}^{(\Phi_{L/K}^{(\varphi)})^{-1}(\overline{U})}
\end{equation}
for every $\overline{U}=U.Y_{L/K}, \overline{V}=V.Y_{L/K}\in 
U_{\widetilde{\mathbb X}{(L/K)}}^\diamond/Y_{L/K}$ with $U,V\in
U_{\widetilde{\mathbb X}{(L/K)}}^\diamond$. Then
$U_{\widetilde{\mathbb X}{(L/K)}}^\diamond/Y_{L/K}$ is a topological 
group under $\ast$, and the map $\Phi_{L/K}^{(\varphi)}$ induces 
an isomorphism of topological groups
\begin{equation}
\Phi_{L/K}^{(\varphi)}:\text{Gal}(L/K)\xrightarrow{\sim}
U_{\widetilde{\mathbb X}{(L/K)}}^\diamond/Y_{L/K}, 
\end{equation}
where the topological group structure on 
$U_{\widetilde{\mathbb X}{(L/K)}}^\diamond/Y_{L/K}$ is defined with
respect to the binary operation $\ast$ defined by eq. (\ref{star2}).
\end{corollary}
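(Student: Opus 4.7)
My plan is to transport the group structure from $\text{Gal}(L/K)$ to the quotient $U_{\widetilde{\mathbb X}{(L/K)}}^\diamond/Y_{L/K}$ via the bijection $\Phi_{L/K}^{(\varphi)}$ furnished by Theorem \ref{cocycle2}, and to verify that the operation thus obtained agrees with the formula $\ast$ in eq. (\ref{star2}). This mirrors the passage from Theorem \ref{cocycle1} to Corollary \ref{grouplaw1}, with the submodule $U_{\mathbb X(L/K)}$ replaced by $Y_{L/K}$; since $U_{\mathbb X(L/K)}\subseteq Y_{L/K}$, the earlier argument carries over once well-definedness of $\ast$ is checked.

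First I would verify that $\ast$ makes sense on cosets. The element $\sigma:=(\Phi_{L/K}^{(\varphi)})^{-1}(\overline{U})\in\text{Gal}(L/K)$ depends only on $\overline{U}$, since $\Phi_{L/K}^{(\varphi)}$ is a bijection between $\text{Gal}(L/K)$ and $U_{\widetilde{\mathbb X}{(L/K)}}^\diamond/Y_{L/K}$ by Theorem \ref{cocycle2}. To see that the coset $\overline{V}^{\sigma}$ depends only on $\overline{V}$, I invoke the lemma stated just before Lemma \ref{non-abelian-hazewinkel-1} to the effect that $Y_{L/K}$ is a topological $\text{Gal}(L/K)$-submodule of $U_{\widetilde{\mathbb X}{(L/K)}}^\diamond$: if $V$ is replaced by $V.y$ with $y\in Y_{L/K}$, then $y^{\sigma}\in Y_{L/K}$, so $(V.y)^{\sigma}$ and $V^{\sigma}$ represent the same coset modulo $Y_{L/K}$.

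Setting $\tau:=(\Phi_{L/K}^{(\varphi)})^{-1}(\overline{V})$, the cocycle identity of Theorem \ref{cocycle2} gives
\begin{equation*}
\Phi_{L/K}^{(\varphi)}(\sigma\tau)=\Phi_{L/K}^{(\varphi)}(\sigma).\Phi_{L/K}^{(\varphi)}(\tau)^{\sigma}=\overline{U}.\overline{V}^{\sigma}=\overline{U}\ast\overline{V}.
\end{equation*}
Hence $\Phi_{L/K}^{(\varphi)}$ intertwines the group composition on $\text{Gal}(L/K)$ with the binary operation $\ast$, and since it is bijective, $U_{\widetilde{\mathbb X}{(L/K)}}^\diamond/Y_{L/K}$ inherits associativity, a neutral element, and inverses from $\text{Gal}(L/K)$, so that $\Phi_{L/K}^{(\varphi)}$ is an isomorphism of (abstract) groups.

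The remaining topological part is where I expect the principal subtlety to lie. I would endow $U_{\widetilde{\mathbb X}{(L/K)}}^\diamond/Y_{L/K}$ with the quotient topology inherited from $U_{\widetilde{\mathbb X}{(L/K)}}^\diamond$, and $\text{Gal}(L/K)$ with the Krull topology. Continuity of $\ast$ then reduces to continuity of multiplication on $U_{\widetilde{\mathbb X}{(L/K)}}^\diamond$ and of the $\text{Gal}(L/K)$-action on it, both built into the topological $\text{Gal}(L/K)$-module structure declared in section 5, combined with continuity of $(\Phi_{L/K}^{(\varphi)})^{-1}=H_{L/K}^{(\varphi)}$, which is visible from the explicit description in eq. (\ref{non-abelian-hazewinkel-3/2+e-def}) together with Lemma \ref{non-abelian-hazewinkel-2}. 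The genuinely hard point is to check bicontinuity of $\Phi_{L/K}^{(\varphi)}$ and to confirm that the resulting topological group structure on $U_{\widetilde{\mathbb X}{(L/K)}}^\diamond/Y_{L/K}$ is independent of the choices (the basic ascending chain $\{K_i\}$, the splittings $h_k$, and the lifts $f_i$) entering the definition of $Y_{L/K}$; once these are settled, the algebraic content of the corollary is essentially automatic.
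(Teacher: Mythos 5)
Your proposal is correct and follows essentially the same route as the paper, which deduces Corollary \ref{grouplaw2} directly from Theorem \ref{cocycle2} (bijectivity plus the cocycle identity) in exact parallel with the passage from Theorem \ref{cocycle1} to Corollary \ref{grouplaw1}: the group law is transported through $\Phi_{L/K}^{(\varphi)}$, with well-definedness of $\ast$ resting on $Y_{L/K}$ being a $\text{Gal}(L/K)$-submodule. The topological assertions you flag as delicate are not argued in the paper either, so your treatment is, if anything, more explicit than the original.
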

\begin{definition}
Let $K$ be a local field satisfying the condition given in eq. 
(\ref{rootofunity}).
Let $L/K$ be a totally-ramified $APF$-Galois extension satisfying
eq. (\ref{phitower}). 
The mapping 
\begin{equation*}
\Phi_{L/K}^{(\varphi)}:\text{Gal}(L/K)\rightarrow
U_{\widetilde{\mathbb X}{(L/K)}}^\diamond/Y_{L/K},
\end{equation*} 
defined in Theorem \ref{cocycle2}, is called the 
\textit{Fesenko reciprocity map for the extension $L/K$}.
\end{definition}
For each $0\leq i\in\mathbb R$, we have previously introduced the groups 
$\left(U_{\widetilde{\mathbb X}{(L/K)}}^\diamond\right)^i$. 
For $0\leq n\in\mathbb Z$, let 
\begin{equation}
Q_{L/K}^n=c_{L/K}\left(\left(U_{\widetilde{\mathbb X}{(L/K)}}
^\diamond\right)^nU_{\mathbb X(L/K)}/U_{\mathbb X(L/K)}\cap
\text{im}(\phi_{L/K}^{(\varphi)})\right),
\end{equation}
which is a subgroup of 
$\left(U_{\widetilde{\mathbb X}{(L/K)}}^\diamond\right)^nY_{L/K}/Y_{L/K}$.
Now, Fesenko ramification theorem, stated in Theorem \ref{ramification1}, 
can be reformulated for the reciprocity map $\Phi_{L/K}^{(\varphi)}$ 
corresponding to the extension $L/K$ as follows.
\begin{theorem}[Ramification theorem]
\label{ramification2}
Suppose that the local field $K$ satisfies the condition given in
eq. (\ref{rootofunity}). 
For $0\leq n\in\mathbb Z$,
let $\text{Gal}(L/K)_n$ denote the $n^{th}$ higher ramification 
subgroup of the Galois group $\text{Gal}(L/K)$ corresponding to the
$APF$-Galois sub-extension $L/K$ of $K_\varphi/K$ in the lower 
numbering. Then, there exists the inclusion
\begin{equation*}
\Phi_{L/K}^{(\varphi)}
\left(\text{Gal}(L/K)_n-\text{Gal}(L/K)_{n+1}\right)\subseteq
\left(U_{\widetilde{\mathbb X}{(L/K)}}^\diamond\right)^nY_{L/K}/Y_{L/K} -
Q_{L/K}^{n+1} .
\end{equation*}
\end{theorem}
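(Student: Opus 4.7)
The plan is to deduce Theorem \ref{ramification2} directly from Theorem \ref{ramification1} by pushing everything forward along the canonical surjection
\begin{equation*}
c_{L/K}\colon U_{\widetilde{\mathbb X}(L/K)}^\diamond/U_{\mathbb X(L/K)}\twoheadrightarrow U_{\widetilde{\mathbb X}(L/K)}^\diamond/Y_{L/K},
\end{equation*}
and exploiting the bijectivity of the composition $\Phi_{L/K}^{(\varphi)}=c_{L/K}\circ\phi_{L/K}^{(\varphi)}$ established in Theorem \ref{cocycle2}. The two assertions bundled into the theorem—membership in $(U_{\widetilde{\mathbb X}(L/K)}^\diamond)^n Y_{L/K}/Y_{L/K}$ and avoidance of $Q_{L/K}^{n+1}$—are handled separately.

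For the inclusion, I would fix $\sigma\in\text{Gal}(L/K)_n$ and invoke Theorem \ref{ramification1} to obtain $\phi_{L/K}^{(\varphi)}(\sigma)\in (U_{\widetilde{\mathbb X}(L/K)}^\diamond)^n U_{\mathbb X(L/K)}/U_{\mathbb X(L/K)}$. Applying $c_{L/K}$ and using the containment $U_{\mathbb X(L/K)}\subseteq Y_{L/K}$ (property (i) in the construction of $Y_{L/K}$), the image $\Phi_{L/K}^{(\varphi)}(\sigma)$ lands in $(U_{\widetilde{\mathbb X}(L/K)}^\diamond)^n Y_{L/K}/Y_{L/K}$, as desired.

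For the avoidance of $Q_{L/K}^{n+1}$, I would argue by contradiction. Assume $\sigma\in\text{Gal}(L/K)_n-\text{Gal}(L/K)_{n+1}$ but $\Phi_{L/K}^{(\varphi)}(\sigma)\in Q_{L/K}^{n+1}$. Unwinding the definition of $Q_{L/K}^{n+1}$, there exists an element $\xi\in (U_{\widetilde{\mathbb X}(L/K)}^\diamond)^{n+1}U_{\mathbb X(L/K)}/U_{\mathbb X(L/K)}\cap\text{im}(\phi_{L/K}^{(\varphi)})$ with $c_{L/K}(\xi)=\Phi_{L/K}^{(\varphi)}(\sigma)$. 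Writing $\xi=\phi_{L/K}^{(\varphi)}(\tau)$ for some $\tau\in\text{Gal}(L/K)$, one gets $\Phi_{L/K}^{(\varphi)}(\tau)=\Phi_{L/K}^{(\varphi)}(\sigma)$, and the bijectivity of $\Phi_{L/K}^{(\varphi)}$ forces $\tau=\sigma$. But then $\phi_{L/K}^{(\varphi)}(\sigma)=\xi\in(U_{\widetilde{\mathbb X}(L/K)}^\diamond)^{n+1}U_{\mathbb X(L/K)}/U_{\mathbb X(L/K)}$, directly contradicting the ``avoidance'' half of Theorem \ref{ramification1} applied to $\sigma\notin\text{Gal}(L/K)_{n+1}$.

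The ramification-theoretic content is entirely supplied by Theorem \ref{ramification1}; the only genuine work is the bookkeeping in step three, where one must carefully unpack that $Q_{L/K}^{n+1}$ is defined as the $c_{L/K}$-image of an intersection \emph{inside} $U_{\widetilde{\mathbb X}(L/K)}^\diamond/U_{\mathbb X(L/K)}$ with $\text{im}(\phi_{L/K}^{(\varphi)})$—it is precisely this built-in intersection with the image of $\phi_{L/K}^{(\varphi)}$ that allows one to produce the preimage $\tau$ and apply injectivity of $\Phi_{L/K}^{(\varphi)}$. Without this, the contradiction would fail because an arbitrary coset representative in $(U_{\widetilde{\mathbb X}(L/K)}^\diamond)^{n+1}Y_{L/K}/Y_{L/K}$ need not come from a Galois element via $\phi_{L/K}^{(\varphi)}$.
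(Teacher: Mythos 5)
Your proposal is correct and follows essentially the same route as the paper: both halves are reduced to the $\phi$-level ramification statement (Theorem \ref{ramification1}) by pushing forward along $c_{L/K}$, using $U_{\mathbb X(L/K)}\subseteq Y_{L/K}$, unwinding the definition of $Q_{L/K}^{n+1}$, and invoking the bijectivity of $\Phi_{L/K}^{(\varphi)}$ from Theorem \ref{cocycle2}. The only cosmetic difference is that you use the statement of Theorem \ref{ramification1} as a black box and make the injectivity step explicit, whereas the paper cites the two halves of the proof of Proposition 1 of Fesenko directly and leaves the injectivity step implicit.
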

\begin{proof}
Let $\tau\in\text{Gal}(L/K)_n$. The first half of the proof of
Proposition 1 in \cite{fesenko-2001} shows that 
$\Phi_{L/K}^{(\varphi)}(\tau)\in\left(U_{\widetilde{\mathbb X}{(L/K)}}
^\diamond\right)^nY_{L/K}/Y_{L/K}$.
Now, let $\overline{U}=U.Y_{L/K}\in Q_{L/K}^{n+1}$, where 
$U\in U_{\widetilde{\mathbb X}{(L/K)}}^\diamond$. Then, by the definition of
$Q_{L/K}^{n+1}$, there exists $V\in\left(U_{\widetilde{\mathbb X}{(L/K)}}
^\diamond\right)^{n+1}U_{\mathbb X(L/K)}$ and $\tau\in\text{Gal}(L/K)$ 
such that
$c_{L/K}(\overline{V})=\overline{U}$ and 
$\phi_{L/K}^{(\varphi)}(\tau)=\overline{V}$, where 
$\overline{V}=V.U_{\mathbb X(L/K)}$.
So, $\Phi_{L/K}^{(\varphi)}(\tau)=\overline{U}$.
The second half of the proof of Proposition 1 in \cite{fesenko-2001} now 
proves that $\tau\in\text{Gal}(L/K)_{n+1}$.
\end{proof}
Now, let $M/K$ be a Galois sub-extension of $L/K$. Thus, there exists
the chain of field extensions
\begin{equation*}
K\subseteq M\subseteq L\subseteq K_\varphi ,
\end{equation*}
where $M$ is a totally-ramified $APF$-Galois extension over the local
field $K$ satisfying the condition given in eq. (\ref{rootofunity}) by
Lemma \ref{apftower}.
 
Now, the basic ascending chain of sub-extensions in $L/K$ fixed 
in eq. (\ref{basic-ascending-chain-L/K}) restricted to $M$ 
\begin{equation}
\label{basic-ascending-chain-M/K}
K=K_o\cap M\subseteq K_1\cap M\subseteq\cdots\subseteq K_i\cap M
\subseteq\cdots\subseteq L\cap M=M
\end{equation}
is \textit{almost} a basic ascending chain of sub-extensions in $M/K$
(almost in the sense that, there may exist elements $0\leq i\in\mathbb Z$ 
such that $K_i\cap M=K_{i+1}\cap M$).
In fact, for each $0\leq i\in\mathbb Z$,
the extension $K_i\cap M/K$ is clearly Galois. 
For each $0\leq i\in\mathbb Z$, consider the surjective homomorphism
\begin{equation*}
r_{K_{i+1}\cap M}:\text{Gal}(K_{i+1}/K_i)\twoheadrightarrow
\text{Gal}(K_{i+1}\cap M/K_i\cap M)
\end{equation*}
defined by the restriction to $K_{i+1}\cap M$ as
\begin{equation*}
\sigma\mapsto\sigma\mid_{K_{i+1}\cap M}
\end{equation*} 
for every $\sigma\in\text{Gal}(K_{i+1}/K_i)$. As 
$\text{Gal}(K_{i+1}/K_i)$ is cyclic of prime order 
$p=\text{char}(\kappa_K)$ (resp. of order relatively prime to $p$) 
in case $1\leq i\in\mathbb Z$ (resp. in case $i=0$), it follows that
$\text{Gal}(K_{i+1}\cap M/K_i\cap M)$ is cyclic of order $p$ or $1$
(resp. of order relatively prime to $p$) in case $1\leq i\in\mathbb Z$
(resp. in case $i=0$). Now fix this almost basic ascending chain of 
sub-extensions in $M/K$ introduced in eq. (\ref{basic-ascending-chain-M/K}).
Observe that, for each $1\leq i\in\mathbb Z$, 
$\sigma_i\mid_{\widetilde{M}}\in\text{Gal}(\widetilde{M}/\widetilde{K})$
satisfies
\begin{equation*}
<(\sigma_i\mid_{\widetilde M})\mid_{K_i\cap M}=\sigma_i\mid_{K_i\cap M}>
=\text{Gal}(K_i\cap M/ K_{i-1}\cap M)
\end{equation*}
as the restriction map 
$r_{K_i\cap M}:\text{Gal}(K_i/K_{i-1})\twoheadrightarrow
\text{Gal}(K_i\cap M/K_{i-1}\cap M)$ is a surjective homomorphism and
$<\sigma_i\mid_{K_i}>=\text{Gal}(K_i/K_{i-1})$.
As usual, we set $\widetilde{K_i\cap M}=(K_i\cap M)\widetilde{K}$.
Note that, for each $1\leq k\in\mathbb Z$, the norm map
\begin{equation*}
\widetilde{N}_{K_{k+1}/K_{k+1}\cap M}:U_{\widetilde{K}_{k+1}}\rightarrow 
U_{\widetilde{K_{k+1}\cap M}}
\end{equation*} 
induces a homomorphism
\begin{equation*}
\widetilde{N}_{K_{k+1}/K_{k+1}\cap M}^*:
U_{\widetilde{K}_{k+1}}/U_{\widetilde{K}_{k+1}}^{\sigma_{k+1}-1}
\rightarrow 
U_{\widetilde{K_{k+1}\cap M}}/U_{\widetilde{K_{k+1}\cap M}}
^{\sigma_{k+1}\mid_{\widetilde{M}}-1},
\end{equation*}
defined by
\begin{equation*}
\widetilde{N}_{K_{k+1}/K_{k+1}\cap M}^*:
u.U_{\widetilde{K}_{k+1}}^{\sigma_{k+1}-1}\mapsto
\widetilde{N}_{K_{k+1}/K_{k+1}\cap M}(u).U_{\widetilde{K_{k+1}\cap M}}
^{\sigma_{k+1}\mid_{\widetilde{M}}-1},
\end{equation*}
for every $u\in U_{\widetilde{K}_{k+1}}$, as 
$\widetilde{N}_{K_{k+1}/K_{k+1}\cap M}
\left(U_{\widetilde{K}_{k+1}}^{\sigma_{k+1}-1}\right)\subseteq
U_{\widetilde{K_{k+1}\cap M}}^{\sigma_{k+1}\mid_{\widetilde{M}}-1}$. 
Thus, the following square,
where the upper and lower horizontal arrows are defined by eq.s 
(\ref{injective-map}) and (\ref{injective-map-definition}),
\begin{equation*}
\SelectTips{cm}{}\xymatrix{
{\text{Gal}(K_{k+1}/K_k)}\ar[r]^{\Xi_{K_{k+1}/K_k}}
\ar[d]_{r_{K_{k+1}\cap M}} & 
{U_{\widetilde{K}_{k+1}}/U_{\widetilde{K}_{k+1}}^{\sigma_{k+1}-1}}
\ar[d]^{\widetilde{N}_{K_{k+1}/K_{k+1}\cap M}^*} \\
{\text{Gal}(K_{k+1}\cap M/K_k\cap M)}\ar[r]^{\Xi_{K_{k+1}\cap M/K_k\cap M}} & 
{U_{\widetilde{K_{k+1}\cap M}}/U_{\widetilde{K_{k+1}\cap M}}
^{\sigma_{k+1}\mid_{\widetilde{M}}-1}},
}
\end{equation*} 
is commutative, as
$\widetilde{N}_{K_{k+1}/K_{k+1}\cap M}(\pi_{K_{k+1}})=\pi_{K_{k+1}\cap M}$
by the \textit{norm coherence} of the Lubin-Tate labelling
$(\pi_{K'})_{\underset{[K':K]<\infty}{\underbrace{K\subseteq K'}}
\subset K_\varphi}$. Hence,
\begin{equation*}
\widetilde{N}_{K_{k+1}/K_{k+1}\cap M}^*\left(T_k^{(L/K)}\right)=T_k^{(M/K)}.
\end{equation*}
Now, we shall define an arrow
\begin{equation}
\label{h-M/K}
h_k^{(M/K)}:\prod_{1\leq i\leq k}
U_{\widetilde{K_k\cap M}}^{\sigma_i\mid_{\widetilde{M}}-1}\rightarrow
\left(\prod_{1\leq i\leq k+1}U_{\widetilde{K_{k+1}\cap M}}
^{\sigma_i\mid_{\widetilde{M}}-1}\right)/U_{\widetilde{K_{k+1}\cap M}}
^{\sigma_{k+1}\mid_{\widetilde{M}}-1}
\end{equation}
which splits the exact sequence
\begin{equation}
\label{split-short-exact-M/K}
\SelectTips{cm}{}\xymatrix@1{
1\ar[r] & T_k^{(M/K)'}\ar[r] & 
{\left(\prod_{1\leq i\leq k+1}U_{\widetilde{K_{k+1}\cap M}}
^{\sigma_i\mid_{\widetilde{M}}-1}\right)
/U_{\widetilde{K_{k+1}\cap M}}^{\sigma_{k+1}\mid_{\widetilde{M}}-1}}
\ar[r]^-{\widetilde{N}_{K_{k+1}\cap M/K_k\cap M}} &
{\prod_{1\leq i\leq k}U_{\widetilde{K_k\cap M}}
^{\sigma_i\mid_{\widetilde{M}}-1}}
\ar[r]\ar@/_2.2pc/[l]_{h_k^{(M/K)}}&1}
\end{equation}
in such a way that
\begin{equation}
\label{diagram-h}
\SelectTips{cm}{}\xymatrix{
{\left(\prod_{1\leq i\leq k+1}U_{\widetilde{K}_{k+1}}
^{\sigma_i-1}\right)/U_{\widetilde{K}_{k+1}}^{\sigma_{k+1}-1}}
\ar[r]^-{\widetilde{N}_{K_{k+1}/K_k}}
\ar[d]_{\widetilde{N}^*_{K_{k+1}/K_{k+1}\cap M}} & 
{\prod_{1\leq i\leq k}U_{\widetilde{K}_k}^{\sigma_i-1}}
\ar@/_2.2pc/[l]_{h_k^{(L/K)}}
\ar[d]^{\widetilde{N}_{K_k/K_k\cap M}} \\
{\left(\prod_{1\leq i\leq k+1}U_{\widetilde{K_{k+1}\cap M}}
^{\sigma_i\mid_{\widetilde{M}}-1}\right)
/U_{\widetilde{K_{k+1}\cap M}}^{\sigma_{k+1}\mid_{\widetilde{M}}-1}}
\ar[r]^-{\widetilde{N}_{K_{k+1}\cap M/K_k\cap M}} & 
{\prod_{1\leq i\leq k}U_{\widetilde{K_k\cap M}}
^{\sigma_i\mid_{\widetilde{M}}-1}}
\ar@/^2.2pc/[l]^{h_k^{(M/K)}}
}
\end{equation}
is a commutative square. In order to do so, however, closely following 
Fesenko (\cite{fesenko-2000, fesenko-2001, fesenko-2005}),
let us review the construction of a splitting 
\begin{equation*}
h_k^{(L/K)} :\prod_{1\leq i\leq k}U_{\widetilde{K}_k}^{\sigma_i-1}\rightarrow
\left(\prod_{1\leq i\leq k+1}U_{\widetilde{K}_{k+1}}^{\sigma_i-1}\right)/
U_{\widetilde{K}_{k+1}}^{\sigma_{k+1}-1} 
\end{equation*}
of the short exact sequence
\begin{equation}
\label{split-short-exact-L/K}
\SelectTips{cm}{}\xymatrix@1{
1\ar[r] & T_k^{(L/K)'}\ar[r] & 
{\left(\prod_{1\leq i\leq k+1}U_{\widetilde{K}_{k+1}}
^{\sigma_i-1}\right)/U_{\widetilde{K}_{k+1}}^{\sigma_{k+1}-1}}
\ar[r]^-{\widetilde{N}_{K_{k+1}/K_k}} &
{\prod_{1\leq i\leq k}U_{\widetilde{K}_k}^{\sigma_i-1}}
\ar[r]\ar@/_2.2pc/[l]_{h_k^{(L/K)}}&1.}
\end{equation}
The product module $\prod_{1\leq i\leq k}U_{\widetilde{K}_k}^{\sigma_i-1}$
is a closed $\mathbb Z_p$-submodule of $U_{\widetilde{K}_k}^1$. Let 
$\{\lambda_j\}$ be a system of topological multiplicative generators of
the topological $\mathbb Z_p$-module $\prod_{1\leq i\leq k}U_{\widetilde{K}_k}
^{\sigma_i-1}$ satisfying the following property. If the torsion 
$\left(\prod_{1\leq i\leq k}U_{\widetilde{K}_k}^{\sigma_i-1}
\right)_{\text{tor}}$ of the module
$\prod_{1\leq i\leq k}U_{\widetilde{K}_k}^{\sigma_i-1}$ is non-trivial, there
exists $\lambda_*\in\{\lambda_j\}$ of order $p^m$ in the torsion of the 
module while the remaining $\lambda_j$ ($j\neq *$) are topologically 
independent over $\mathbb Z_p$.
Now, define a map
\begin{equation*}
h_k^{(L/K)} :\{\lambda_j\}\rightarrow \left(\prod_{1\leq i\leq k+1}
U_{\widetilde{K}_{k+1}}^{\sigma_i-1}\right)
/U_{\widetilde{K}_{k+1}}^{\sigma_{k+1}-1}
\end{equation*} 
on the topological generators $\{\lambda_j\}$ by
\begin{equation*}
h_k^{(L/K)} :\lambda_j\mapsto u_j.U_{\widetilde{K}_{k+1}}^{\sigma_{k+1}-1},
\end{equation*}
where $u_j\in\prod_{1\leq i\leq k+1}U_{\widetilde{K}_{k+1}}
^{\sigma_i-1}$ satisfies $\widetilde{N}_{K_{k+1}/K_k}(u_j)=\lambda_j$.
It then follows by step 5 of the proof of the Theorem in Section 3
of \cite{fesenko-2005} that,
$h_k^{(L/K)}(\lambda_*)^{p^m}\in U_{\widetilde{K}_{k+1}}^{\sigma_{k+1}-1}$.
Therefore, the arrow $h_k^{(L/K)}$ extends uniquely to a homomorphism
\begin{equation*}
h_k^{(L/K)} :\prod_{1\leq i\leq k}U_{\widetilde{K}_k}^{\sigma_i-1}\rightarrow
\left(\prod_{1\leq i\leq k+1}U_{\widetilde{K}_{k+1}}^{\sigma_i-1}\right)/
U_{\widetilde{K}_{k+1}}^{\sigma_{k+1}-1} ,
\end{equation*}
which is a splitting of the short exact sequence given by
eq. (\ref{split-short-exact-L/K}).
Now, define 
\begin{equation*}
h_k^{(M/K)}:\prod_{1\leq i\leq k}
U_{\widetilde{K_k\cap M}}^{\sigma_i\mid_{\widetilde{M}}-1}\rightarrow
\left(\prod_{1\leq i\leq k+1}U_{\widetilde{K_{k+1}\cap M}}
^{\sigma_i\mid_{\widetilde{M}}-1}\right)/U_{\widetilde{K_{k+1}\cap M}}
^{\sigma_{k+1}\mid_{\widetilde{M}}-1}
\end{equation*}
as follows. Note that,
\begin{equation*}
\widetilde{N}_{K_k/K_k\cap M}:\prod_{1\leq i\leq k}U_{\widetilde{K}_k}
^{\sigma_i-1}\rightarrow\prod_{1\leq i\leq k}U_{\widetilde{K_k\cap M}}
^{\sigma_i\mid_{\widetilde{M}}-1}
\end{equation*}
is a surjective homomorphism, as 
$\widetilde{N}_{K_k/K_k\cap M}:U_{\widetilde{K}_k}\rightarrow 
U_{\widetilde{K_k\cap M}}$ is a surjective homomorphism.
Thus, the collection $\{\widetilde{N}_{K_k/K_k\cap M}(\lambda_j)\}$
is a system of topological multiplicative generators of the topological
$\mathbb Z_p$-module $\prod_{1\leq i\leq k}U_{\widetilde{K_k\cap M}}
^{\sigma_i\mid_{\widetilde{M}}-1}$. Moreover, note that
$\widetilde{N}_{K_k/K_k\cap M}(\lambda_*)^{p^m}=1$. Thus,
$\widetilde{N}_{K_k/K_k\cap M}(\lambda_*)$ is in the torsion-part
$\left(\prod_{1\leq i\leq k}U_{\widetilde{K_k\cap M}}
^{\sigma_i\mid_{\widetilde{M}}-1}\right)_{\text{tor}}$.
For the remaining $\lambda_j$ ($j\neq *$), the collection
$\left\{\widetilde{N}_{K_k/K_k\cap M}(\lambda_j)\right\}_{j\neq *}$ 
is topologically independent over $\mathbb Z_p$. Now, following Fesenko's 
construction of $h_k^{(L/K)}$, define a map
\begin{equation*}
h_k^{(M/K)} :\left\{\widetilde{N}_{K_k/K_k\cap M}(\lambda_j)\right\}
\rightarrow \left(\prod_{1\leq i\leq k+1}
U_{\widetilde{K_{k+1}\cap M}}^{\sigma_i\mid_{\widetilde{M}}-1}\right)
/U_{\widetilde{K_{k+1}\cap M}}^{\sigma_{k+1}\mid_{\widetilde{M}}-1}
\end{equation*} 
on the topological generators 
$\left\{\widetilde{N}_{K_k/K_k\cap M}(\lambda_j)\right\}$
by
\begin{equation}
\label{def-split-M/K}
h_k^{(M/K)} : \widetilde{N}_{K_k/K_k\cap M}(\lambda_j)\mapsto
\widetilde{N}_{K_{k+1}/K_{k+1}\cap M}(u_j).U_{\widetilde{K_{k+1}\cap M}}
^{\sigma_{k+1}\mid_{\widetilde{M}}-1},
\end{equation}
where $u_j\in\prod_{1\leq i\leq k+1}U_{\widetilde{K}_{k+1}}
^{\sigma_i-1}$ satisfies $\widetilde{N}_{K_{k+1}/K_k}(u_j)=\lambda_j$, and
thereby
$\widetilde{N}_{K_{k+1}/K_{k+1}\cap M}(u_j)$ satisfies
\begin{equation*}
\aligned
\widetilde{N}_{K_{k+1}\cap M/K_k\cap M}
\left(\widetilde{N}_{K_{k+1}/K_{k+1}\cap M}(u_j)\right)&=
\widetilde{N}_{K_{k+1}/K_k\cap M}(u_j)\\
&=\widetilde{N}_{K_k/K_k\cap M}\left(\widetilde{N}_{K_{k+1}/K_k}(u_j)\right)\\
&=\widetilde{N}_{K_k/K_k\cap M}(\lambda_j) .
\endaligned
\end{equation*}
Therefore, the arrow $h_k^{(M/K)}$ extends uniquely to a homomorphism
\begin{equation*}
h_k^{(M/K)}:\prod_{1\leq i\leq k}
U_{\widetilde{K_k\cap M}}^{\sigma_i\mid_{\widetilde{M}}-1}\rightarrow
\left(\prod_{1\leq i\leq k+1}U_{\widetilde{K_{k+1}\cap M}}
^{\sigma_i\mid_{\widetilde{M}}-1}\right)/U_{\widetilde{K_{k+1}\cap M}}
^{\sigma_{k+1}\mid_{\widetilde{M}}-1} ,
\end{equation*}
which is a splitting of the short exact sequence given by eq. 
(\ref{split-short-exact-M/K}). In fact, it suffices to show that, for
$u_j\in\prod_{1\leq i\leq k+1}U_{\widetilde{K}_{k+1}}^{\sigma_i-1}$ 
satisfying $\widetilde{N}_{K_{k+1}/K_k}(u_j)=\lambda_j$,
\begin{equation*}
h_k^{(M/K)}\circ \widetilde{N}_{K_{k+1}\cap M/K_k\cap M}:
\widetilde{N}_{K_{k+1}/K_{k+1}\cap M}(u_j)U_{\widetilde{K_{k+1}\cap M}}
^{\sigma_{k+1}\mid_{\widetilde{M}}-1}\mapsto 
\widetilde{N}_{K_{k+1}/K_{k+1}\cap M}(u_j)U_{\widetilde{K_{k+1}\cap M}}
^{\sigma_{k+1}\mid_{\widetilde{M}}-1},
\end{equation*}
which follows from the equalities
\begin{equation*}
\begin{aligned}
h_k^{(M/K)}\left(\widetilde{N}_{K_{k+1}\cap M/K_k\cap M}
(\widetilde{N}_{K_{k+1}/K_{k+1}\cap M}(u_j))\right)&=
h_k^{(M/K)}(\widetilde{N}_{K_{k+1}/K_k\cap M}(u_j))\\
&=h_k^{(M/K)}(\widetilde{N}_{K_k/K_k\cap M}
(\widetilde{N}_{K_{k+1}/K_k}(u_j)))\\
&=h_k^{(M/K)}(\widetilde{N}_{K_k/K_k\cap M}(\lambda_j))
\end{aligned}
\end{equation*}
and by the definition of the arrow $h_k^{(M/K)}$ given by eq. 
(\ref{def-split-M/K}).
Moreover, the diagram (\ref{diagram-h}) commutes, which follows from
the equality
\begin{equation*}
\begin{aligned}
h_k^{(M/K)}(\widetilde{N}_{K_k/K_k\cap M}(\lambda_j))&=
\widetilde{N}_{K_{k+1}/K_{k+1}\cap M}(u_j)U_{\widetilde{K_{k+1}\cap M}}
^{\sigma_{k+1}\mid_{\widetilde{M}}-1}\\
&=\widetilde{N}^*_{K_{k+1}/K_{k+1}\cap M}
(u_j.U_{\widetilde{K}_{k+1}}^{\sigma_{k+1}-1})\\
&=\widetilde{N}^*_{K_{k+1}/K_{k+1}\cap M}(h_k^{(L/K)}(\lambda_j)).
\end{aligned}
\end{equation*}
For each $1\leq k\in\mathbb Z$, consider any map
\begin{equation}
\label{g-M/K}
g_k^{(M/K)}:\prod_{1\leq i\leq k}U_{\widetilde{K_k\cap M}}
^{\sigma_i\mid_{\widetilde{M}}-1}\rightarrow\prod_{1\leq i\leq k+1}
U_{\widetilde{K_{k+1}\cap M}}^{\sigma_i\mid_{\widetilde{M}}-1}
\end{equation}
which commutes the following square
\begin{equation}
\label{diagram-g}
\SelectTips{cm}{}\xymatrix{
{\prod_{1\leq i\leq k}U_{\widetilde{K}_k}^{\sigma_i-1}}
\ar[r]^-{g_k^{(L/K)}}\ar[d]_{\widetilde{N}_{K_k/K_k\cap M}} & 
{\prod_{1\leq i\leq k+1}U_{\widetilde{K}_{k+1}}^{\sigma_i-1}}
\ar[d]^{\widetilde{N}_{K_{k+1}/K_{k+1}\cap M}} \\
{\prod_{1\leq i\leq k}U_{\widetilde{K_k\cap M}}
^{\sigma_i\mid_{\widetilde{M}}-1}}\ar@{.>}[r]^-{g_k^{(M/K)}} & 
{\prod_{1\leq i\leq k+1}U_{\widetilde{K_{k+1}\cap M}}
^{\sigma_i\mid_{\widetilde{M}}-1}}.
}
\end{equation}
Note that, such a map satisfies
\begin{equation*}
h_k^{(M/K)}=g_k^{(M/K)}\mod{U_{\widetilde{K_{k+1}\cap M}}
^{\sigma_{k+1}\mid_{\widetilde{M}}-1}}.
\end{equation*}
In fact, by the commutative diagram (\ref{diagram-h}),
for any $w\in\prod_{1\leq i\leq k}U_{\widetilde{K_k\cap M}}
^{\sigma_i\mid_{\widetilde{M}}-1}$, there exists 
$v\in\prod_{1\leq i\leq k}U_{\widetilde{K}_k}^{\sigma_i-1}$ such that
$w=\widetilde{N}_{K_k/K_k\cap M}(v)$, and
\begin{equation*}
\begin{aligned}
h_k^{(M/K)}(w)&=h_k^{(M/K)}\left(\widetilde{N}_{K_k/K_k\cap M}(v)\right)\\
&=\widetilde{N}^*_{K_{k+1}/K_{k+1}\cap M}\left(h_k^{(L/K)}(v)\right)\\
&=\widetilde{N}^*_{K_{k+1}/K_{k+1}\cap M}\left(g_k^{(L/K)}(v).
U_{\widetilde{K}_{k+1}}^{\sigma_{k+1}-1}\right)\\
&=\widetilde{N}_{K_{k+1}/K_{k+1}\cap M}\left(g_k^{(L/K)}(v)\right).
U_{\widetilde{K_{k+1}\cap M}}^{\sigma_{k+1}\mid_{\widetilde{M}}-1},
\end{aligned}
\end{equation*}
and by the commutativity of the diagram (\ref{diagram-g}),
\begin{equation*}
\begin{aligned}
\widetilde{N}_{K_{k+1}/K_{k+1}\cap M}\left(g_k^{(L/K)}(v)\right)&=
g_k^{(M/K)}\left(\widetilde{N}_{K_k/K_k\cap M}(v)\right)\\
&=g_k^{(M/K)}(w).
\end{aligned}
\end{equation*}
Thus, the equality
\begin{equation*}
h_k^{(M/K)}(w)=g_k^{(M/K)}(w).U_{\widetilde{K_{k+1}\cap M}}
^{\sigma_{k+1}\mid_{\widetilde{M}}-1}
\end{equation*}
follows for every 
$w\in\prod_{1\leq i\leq k}U_{\widetilde{K_k\cap M}}
^{\sigma_i\mid_{\widetilde{M}}-1}$.

Now, for each $1\leq i\in\mathbb Z$, introduce the map
\begin{equation*}
f_i^{(M/K)}: U_{\widetilde{K_i\cap M}}^{\sigma_i\mid_{\widetilde{M}}-1}
\rightarrow U_{\widetilde{\mathbb X}(M/K)}
\end{equation*}
by
\begin{equation*}
f_i^{(M/K)}(w)=\widetilde{\mathcal N}_{L/M}\left(f_i^{(L/K)}(v)\right),
\end{equation*}
where $v\in U_{\widetilde{K}_i}^{\sigma_i-1}$ is any element satisfying
$\widetilde{N}_{K_i/K_i\cap M}(v)=w\in U_{\widetilde{K_i\cap M}}
^{\sigma_i\mid_{\widetilde{M}}-1}$. Note that, if 
$v'\in U_{\widetilde{K}_i}^{\sigma_i-1}$ such that
$\widetilde{N}_{K_i/K_i\cap M}(v')=w$, then
$\widetilde{\mathcal N}_{L/M}\left(f_i^{(L/K)}(v)\right)=
\widetilde{\mathcal N}_{L/M}\left(f_i^{(L/K)}(v')\right)$. In fact, there 
exists $u\in\ker\left(\widetilde{N}_{K_i/K_i\cap M}\right)$ such that
$v'=vu$. Thus, we have to verify that
$\widetilde{\mathcal N}_{L/M}\left(f_i^{(L/K)}(v)\right)=
\widetilde{\mathcal N}_{L/M}\left(f_i^{(L/K)}(vu)\right)$. That is,
for each $1\leq j\in\mathbb Z$, we have to check that
$\widetilde{N}_{K_j/K_j\cap M}\left(\text{Pr}_{\widetilde{K}_j}
(f_i^{(L/K)}(v))\right)
=\widetilde{N}_{K_j/K_j\cap M}\left(\text{Pr}_{\widetilde{K}_j}
(f_i^{(L/K)}(vu))\right)$.
Now, for $j>i$, it follows that
\begin{equation*}
\begin{aligned}
\widetilde{N}_{K_j/K_j\cap M}\left(\text{Pr}_{\widetilde{K}_j}
(f_i^{(L/K)}(v))\right)&=
\widetilde{N}_{K_j/K_j\cap M}\left(g_{j-1}^{(L/K)}\circ\cdots
\circ g_i^{(L/K)}(v)\right)\\
&=g_{j-1}^{(M/K)}\circ\cdots\circ g_i^{(M/K)}
\left(\widetilde{N}_{K_i/K_i\cap M}(v)\right)\\
&= g_{j-1}^{(M/K)}\circ\cdots\circ g_i^{(M/K)}
\left(\widetilde{N}_{K_i/K_i\cap M}(vu)\right)\\
&=\widetilde{N}_{K_j/K_j\cap M}\left(g_{j-1}^{(L/K)}\circ\cdots
\circ g_i^{(L/K)}(vu)\right)\\
&=\widetilde{N}_{K_j/K_j\cap M}\left(\text{Pr}_{\widetilde{K}_j}
(f_i^{(L/K)}(vu))\right) .
\end{aligned}
\end{equation*}
Thus, the map 
\begin{equation*}
f_i^{(M/K)}: U_{\widetilde{K_i\cap M}}^{\sigma_i\mid_{\widetilde{M}}-1}
\rightarrow U_{\widetilde{\mathbb X}(M/K)}
\end{equation*}
is well-defined. Moreover, for $j>i$,
\begin{equation*}
\text{Pr}_{\widetilde{K_j\cap M}}\circ f_i^{(M/K)}=\left(g_{j-1}^{(M/K)}\circ
\cdots\circ g_i^{(M/K)}\right)\mid_{U_{\widetilde{K_i\cap M}}
^{\sigma_i\mid_{\widetilde{M}}-1}}.
\end{equation*}
In fact, for $w\in U_{\widetilde{K_i\cap M}}^{\sigma_i\mid_{\widetilde{M}}-1}$,
there exists $v\in U_{\widetilde{K}_i}^{\sigma_i-1}$ such that
$\widetilde{N}_{K_i/K_i\cap M}(v)=w$, and 
$f_i^{(M/K)}(w)=\widetilde{\mathcal N}_{L/M}\left(f_i^{(L/K)}(v)\right)$. 
That is, the following square
\begin{equation}
\label{square-L/M}
\SelectTips{cm}{}\xymatrix{
{U_{\widetilde{K}_i}^{\sigma_i-1}}\ar[r]^{f_i^{(L/K)}}
\ar[d]_{\widetilde{N}_{K_i/K_i\cap M}} & 
{U_{\widetilde{\mathbb X}(L/K)}}\ar[d]^{\widetilde{\mathcal N}_{L/M}} \\
{U_{\widetilde{K_i\cap M}}^{\sigma_i\mid_{\widetilde{M}}-1}}
\ar[r]^{f_i^{(M/K)}} & {U_{\widetilde{\mathbb X}(M/K)}}
}
\end{equation}
is commutative. Thus,
\begin{equation*}
\begin{aligned}
\text{Pr}_{\widetilde{K_j\cap M}}\circ f_i^{(M/K)}(w)&=
\text{Pr}_{\widetilde{K_j\cap M}}\circ\widetilde{\mathcal N}_{L/M}
\left(f_i^{(L/K)}(v)\right)\\
&=\widetilde{N}_{K_j/K_j\cap M}\left(\text{Pr}_{\widetilde{K}_j}
\circ f_i^{(L/K)}(v)\right)\\
&=\widetilde{N}_{K_j/K_j\cap M}\left((g_{j-1}^{(L/K)}\circ\cdots
\circ g_i^{(L/K)})(v)\right)\\
&=\left(g_{j-1}^{(M/K)}\circ\cdots\circ g_i^{(M/K)}\right)
\left(\widetilde{N}_{K_i/K_i\cap M}(v)\right),
\end{aligned}
\end{equation*}
which is the desired equality.

Let, for each $0<i\in\mathbb Z$, 
\begin{equation*}
Z_i^{(M/K)}=\text{im}(f_i^{(M/K)}).
\end{equation*} 
Then, by Lemma \ref{infinite-product} or by Lemma 4 of \cite{fesenko-2001}, 
for $z^{(i)}\in Z_i^{(M/K)}$, the product $\prod_i z^{(i)}$ converges to 
an element in $U_{\widetilde{\mathbb X}(M/K)}^\diamond$. Let
\begin{equation*}
Z_{M/K}\left(\{K_i\cap M,f_i^{(M/K)}\}\right)=\left\{\prod_iz^{(i)}:z^{(i)}\in
Z_i^{(M/K)}\right\},
\end{equation*}
which is a topological subgroup of $U_{\widetilde{\mathbb X}(M/K)}^\diamond$.
Introduce the topological $Gal(M/K)$-submodule 
$Y_{M/K}\left(\{K_i\cap M,f_i^{(M/K)}\}\right)=Y_{M/K}$ of 
$U_{\widetilde{\mathbb X}(M/K)}^\diamond$ by
\begin{equation*}
Y_{M/K}=\left\{y\in U_{\widetilde{\mathbb X}(M/K)}: y^{1-\varphi}\in 
Z_{M/K}\left(\{K_i\cap M,f_i^{(M/K)}\}\right)\right\}.
\end{equation*}
\begin{lemma}
The norm map 
$\widetilde{\mathcal N}_{L/M}:\widetilde{\mathbb X}(L/K)^\times\rightarrow
\widetilde{\mathbb X}(M/K)^\times$ introduced by eq.s (\ref{coleman-norm})
and (\ref{coleman-norm-def}) further satisfies
\begin{itemize}
\item[(i)] 
$\widetilde{\mathcal N}_{L/M}\left(Z_{L/K}(\{K_i,f_i^{(L/K)}\})\right)
\subseteq Z_{M/K}\left(\{K_i\cap M,f_i^{(M/K)}\}\right)$;
\item[(ii)]
$\widetilde{\mathcal N}_{L/M}(Y_{L/K})\subseteq Y_{M/K}$.
\end{itemize}
\end{lemma}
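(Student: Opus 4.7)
The proof will proceed by taking a general element of $Z_{L/K}(\{K_i,f_i^{(L/K)}\})$, applying the commutative square (\ref{square-L/M}) term-by-term, and then deducing (ii) formally from (i) using the fact that the Coleman-type norm map $\widetilde{\mathcal N}_{L/M}$ commutes with the Frobenius $\varphi$.

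For (i), let $z \in Z_{L/K}(\{K_i,f_i^{(L/K)}\})$, so that $z = \prod_{i \geq 1} z^{(i)}$ for some $z^{(i)} \in Z_i^{(L/K)}=\mathrm{im}(f_i^{(L/K)})$. Choose $\alpha^{(i)} \in U_{\widetilde{K}_i}^{\sigma_i-1}$ with $f_i^{(L/K)}(\alpha^{(i)})=z^{(i)}$. Since $\widetilde{\mathcal N}_{L/M}$ is a continuous group homomorphism,
\begin{equation*}
\widetilde{\mathcal N}_{L/M}(z) \;=\; \prod_{i \geq 1} \widetilde{\mathcal N}_{L/M}(z^{(i)}) \;=\; \prod_{i \geq 1} \widetilde{\mathcal N}_{L/M}\bigl(f_i^{(L/K)}(\alpha^{(i)})\bigr).
\end{equation*}
The commutative square (\ref{square-L/M}) established earlier yields $\widetilde{\mathcal N}_{L/M}(f_i^{(L/K)}(\alpha^{(i)}))=f_i^{(M/K)}(\widetilde{N}_{K_i/K_i\cap M}(\alpha^{(i)}))$. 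Setting $\beta^{(i)}:=\widetilde{N}_{K_i/K_i\cap M}(\alpha^{(i)})$, the fact (used implicitly in constructing diagram (\ref{diagram-h})) that $\widetilde{N}_{K_i/K_i\cap M}$ maps $U_{\widetilde{K}_i}^{\sigma_i-1}$ into $U_{\widetilde{K_i\cap M}}^{\sigma_i\mid_{\widetilde M}-1}$ gives $\beta^{(i)}\in U_{\widetilde{K_i\cap M}}^{\sigma_i\mid_{\widetilde M}-1}$, hence $f_i^{(M/K)}(\beta^{(i)})\in Z_i^{(M/K)}$. Convergence of the resulting infinite product inside $U_{\widetilde{\mathbb X}(M/K)}^\diamond$ follows from Lemma \ref{infinite-product} applied to the extension $M/K$, so $\widetilde{\mathcal N}_{L/M}(z)\in Z_{M/K}(\{K_i\cap M,f_i^{(M/K)}\})$.

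For (ii), let $y\in Y_{L/K}$, so that $y^{1-\varphi}\in Z_{L/K}(\{K_i,f_i^{(L/K)}\})$. By property (i) of the Coleman norm map proved earlier, $\widetilde{\mathcal N}_{L/M}(y)\in U_{\widetilde{\mathbb X}(M/K)}$, so it suffices to verify the condition $\widetilde{\mathcal N}_{L/M}(y)^{1-\varphi}\in Z_{M/K}(\{K_i\cap M,f_i^{(M/K)}\})$. The action of $\varphi$ on $\widetilde{\mathbb X}(L/K)^\times$ is through its action on the $\widetilde{K}$-coordinate, while $\widetilde{\mathcal N}_{L/M}$ is defined componentwise via norms in the ``$K_i$-direction''; since these two actions are independent, $\widetilde{\mathcal N}_{L/M}\circ\varphi=\varphi\circ\widetilde{\mathcal N}_{L/M}$, and therefore
\begin{equation*}
\widetilde{\mathcal N}_{L/M}(y)^{1-\varphi}=\widetilde{\mathcal N}_{L/M}(y^{1-\varphi})\in\widetilde{\mathcal N}_{L/M}\bigl(Z_{L/K}(\{K_i,f_i^{(L/K)}\})\bigr)\subseteq Z_{M/K}(\{K_i\cap M,f_i^{(M/K)}\}),
\end{equation*}
the last inclusion being part (i). Hence $\widetilde{\mathcal N}_{L/M}(y)\in Y_{M/K}$, finishing the proof.

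The one potentially subtle point is the assertion $\widetilde{N}_{K_i/K_i\cap M}(U_{\widetilde{K}_i}^{\sigma_i-1})\subseteq U_{\widetilde{K_i\cap M}}^{\sigma_i\mid_{\widetilde M}-1}$ that feeds into step (i); this relies on the fact that $K_i/K_i\cap M$ is Galois (since both $K_i/K$ and $K_i\cap M/K$ are Galois) together with the fact that $\sigma_i|_{\widetilde M}$ commutes with the norm $\widetilde{N}_{K_i/K_i\cap M}$ modulo $\widetilde{N}_{K_i/K_i\cap M}$-kernels, an observation that is exactly the content of diagram (\ref{diagram-h}) used above. Apart from this bookkeeping, the proof is essentially a formal consequence of square (\ref{square-L/M}) and the Frobenius-equivariance of $\widetilde{\mathcal N}_{L/M}$.
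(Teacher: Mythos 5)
Your proof is correct and follows essentially the same route as the paper: part (i) is continuity of the multiplicative map $\widetilde{\mathcal N}_{L/M}$ applied to the convergent product together with the commutative square (\ref{square-L/M}), and part (ii) is the Frobenius-equivariance $\widetilde{\mathcal N}_{L/M}(y^{1-\varphi})=\widetilde{\mathcal N}_{L/M}(y)^{1-\varphi}$ combined with (i). Your extra bookkeeping (choosing the $\alpha^{(i)}$, noting $\widetilde{N}_{K_i/K_i\cap M}(U_{\widetilde{K}_i}^{\sigma_i-1})\subseteq U_{\widetilde{K_i\cap M}}^{\sigma_i\mid_{\widetilde M}-1}$, and checking $\widetilde{\mathcal N}_{L/M}(y)\in U_{\widetilde{\mathbb X}(M/K)}$) only spells out details the paper leaves implicit.
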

\begin{proof}
Recall that, $\widetilde{\mathcal N}_{L/M}:\widetilde{\mathbb X}(L/K)^\times
\rightarrow\widetilde{\mathbb X}(M/K)^\times$ is a continuous mapping. 
\begin{itemize}
\item[(i)]
For any choice of $z^{(i)}\in Z_i^{(L/K)}$, the continuity of the 
multiplicative arrow 
$\widetilde{\mathcal N}_{L/M}:\widetilde{\mathbb X}(L/K)^\times\rightarrow
\widetilde{\mathbb X}(M/K)^\times$ yields
\begin{equation*}
\widetilde{\mathcal N}_{L/M}\left(\prod_iz^{(i)}\right)=\prod_i
\widetilde{\mathcal N}_{L/M}(z^{(i)}),
\end{equation*}
where $\widetilde{\mathcal N}_{L/M}(z^{(i)})\in Z_i^{(M/K)}$ by the 
commutative square (\ref{square-L/M}).
\item[(ii)]
Now let $y\in Y_{L/K}$. Then, 
$y^{1-\varphi}\in Z_{L/K}(\{K_i,f_i^{(L/K)}\})$. Thus,
$\widetilde{\mathcal N}_{L/M}(y^{1-\varphi})
=\widetilde{\mathcal N}_{L/M}(y)^{1-\varphi}\in Z_{M/K}
\left(\{K_i\cap M,f_i^{(M/K)}\}\right)$ by part (i). Now the result follows.
\end{itemize}
\end{proof}
Thus, the norm map 
$\widetilde{\mathcal N}_{L/M}:\widetilde{\mathbb X}(L/K)^\times\rightarrow
\widetilde{\mathbb X}(M/K)^\times$ of eq. (\ref{coleman-norm}) 
defined by eq. (\ref{coleman-norm-def}) induces a group homomorphism,
which will again be called the \textit{Coleman norm map from $L$ to $M$},
\begin{equation}
\label{coleman-norm-2}
\widetilde{\mathcal N}_{L/M}^{\text{Coleman}}:
U_{\widetilde{\mathbb X}(L/K)}^\diamond/Y_{L/K}
\rightarrow U_{\widetilde{\mathbb X}(M/K)}^\diamond/Y_{M/K}
\end{equation}
and defined by
\begin{equation}
\label{coleman-norm-2-def}
\widetilde{\mathcal N}_{L/M}^{\text{Coleman}}(\overline{U})=
\widetilde{\mathcal N}_{L/M}(U).Y_{M/K},
\end{equation}
for every $U\in U_{\widetilde{\mathbb X}(L/K)}^\diamond$, where
$\overline{U}$ denotes, as usual, the coset $U.Y_{L/K}$ in
$U_{\widetilde{\mathbb X}(L/K)}^\diamond/Y_{L/K}$.

Let
\begin{equation*}
\Phi_{M/K}^{(\varphi)}:\text{Gal}(M/K)\rightarrow 
U_{\widetilde{\mathbb X}{(M/K)}}^\diamond/Y_{M/K}
\end{equation*}
be the corresponding Fesenko reciprocity map defined for the extension
$M/K$, where $Y_{M/K}=Y_{M/K}\left(\{K_i\cap M,f_i^{(M/K)}\}\right)$. 
\begin{theorem}
\label{square1}
For the Galois sub-extension $M/K$ of $L/K$,
the square
\begin{equation}
\label{square-L/M-Y}
\SelectTips{cm}{}\xymatrix{
{\text{Gal}(L/K)}\ar[r]^-{\Phi_{L/K}^{(\varphi)}}\ar[d]_{\text{res}_M} & 
{U_{\widetilde{\mathbb X}{(L/K)}}^\diamond/Y_{L/K}}
\ar[d]^{\widetilde{\mathcal N}^{Coleman}_{L/M}} \\
{\text{Gal}(M/K)}\ar[r]^-{\Phi_{M/K}^{(\varphi)}} & 
{U_{\widetilde{\mathbb X}{(M/K)}}^\diamond/Y_{M/K}},
}
\end{equation}
where the right-vertical arrow
\begin{equation*}
\widetilde{\mathcal N}^{Coleman}_{L/M} : 
U_{\widetilde{\mathbb X}{(L/K)}}^\diamond/Y_{L/K}
\rightarrow U_{\widetilde{\mathbb X}{(M/K)}}^\diamond/Y_{M/K}
\end{equation*}
is the Coleman norm map from $L$ to $M$ defined by eq.s (\ref{coleman-norm-2}) 
and (\ref{coleman-norm-2-def}), is commutative.
\end{theorem}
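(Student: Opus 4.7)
The plan is to reduce the commutativity of square~(\ref{square-L/M-Y}) to the already-established commutativity of square~(\ref{square-L/M-U}) for the maps $\phi_{L/K}^{(\varphi)}$ and $\phi_{M/K}^{(\varphi)}$, exploiting the factorization $\Phi_{L/K}^{(\varphi)}=c_{L/K}\circ\phi_{L/K}^{(\varphi)}$ (and likewise for $M/K$), together with the inclusions $U_{\mathbb X(L/K)}\subseteq Y_{L/K}$ and $U_{\mathbb X(M/K)}\subseteq Y_{M/K}$.

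First I would fix $\sigma\in\text{Gal}(L/K)$ and, via Theorem~\ref{fundamental-equation}, pick a representative $U_\sigma\in U_{\widetilde{\mathbb X}(L/K)}^\diamond$ with $U_\sigma^{1-\varphi}=\Pi_{\varphi;L/K}^{\sigma-1}$. By the definition of $\Phi_{L/K}^{(\varphi)}$ we then have $\Phi_{L/K}^{(\varphi)}(\sigma)=U_\sigma\cdot Y_{L/K}$, and applying the Coleman norm of eq.~(\ref{coleman-norm-2}) yields
\[
\widetilde{\mathcal N}_{L/M}^{\text{Coleman}}\bigl(\Phi_{L/K}^{(\varphi)}(\sigma)\bigr)
=\widetilde{\mathcal N}_{L/M}(U_\sigma)\cdot Y_{M/K}.
\]
Similarly $\Phi_{M/K}^{(\varphi)}(\sigma\mid_M)=U_{\sigma\mid_M}\cdot Y_{M/K}$, so the task reduces to checking the congruence
\[
\widetilde{\mathcal N}_{L/M}(U_\sigma)\equiv U_{\sigma\mid_M}\pmod{Y_{M/K}}.
\]

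By the commutativity of the preceding square~(\ref{square-L/M-U}) for the $\phi$-maps, we already know that $\widetilde{\mathcal N}_{L/M}(U_\sigma)\equiv U_{\sigma\mid_M}\pmod{U_{\mathbb X(M/K)}}$. The desired congruence then follows from the inclusion $U_{\mathbb X(M/K)}\subseteq Y_{M/K}$, which is immediate from the definition~(\ref{definition-of-Y}) of $Y_{M/K}$: any $y\in U_{\mathbb X(M/K)}$ has all of its components in the non-completed tower, hence $y^\varphi=y$ and so $y^{1-\varphi}=1\in Z_{M/K}(\{K_i\cap M,f_i^{(M/K)}\})$.

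The main obstacle in this argument is not the above diagram chase but the preceding lemma, which guarantees that the Coleman norm map of eq.~(\ref{coleman-norm-2}) is actually well-defined at the $Y$-quotient level, i.e.\ that $\widetilde{\mathcal N}_{L/M}(Y_{L/K})\subseteq Y_{M/K}$. That step in turn depends on the careful, compatible construction of the splittings $h_k^{(M/K)}$, lifts $g_k^{(M/K)}$, and Fesenko maps $f_i^{(M/K)}$ attached to the induced chain~(\ref{basic-ascending-chain-M/K}), which fit into the commutative squares~(\ref{diagram-h}), (\ref{diagram-g}) and~(\ref{square-L/M}) together with their $L/K$ counterparts; once those compatibilities are in hand, the commutativity of square~(\ref{square-L/M-Y}) is essentially formal.
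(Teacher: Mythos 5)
Your proposal is correct and follows essentially the same route as the paper: the paper pastes the already-proved square~(\ref{square-L/M-U}) with the (obvious) square comparing the quotients by $U_{\mathbb X}$ and by $Y$, which is exactly your element-wise chase using $\widetilde{\mathcal N}_{L/M}(U_\sigma)\equiv U_{\sigma\mid_M}\pmod{U_{\mathbb X(M/K)}}$ together with $U_{\mathbb X(M/K)}\subseteq Y_{M/K}$. Your remark that the real content lies in the preceding lemma ($\widetilde{\mathcal N}_{L/M}(Y_{L/K})\subseteq Y_{M/K}$, making the map~(\ref{coleman-norm-2}) well defined) matches the paper's structure as well.
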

\begin{proof}
It suffices to prove that the square
\begin{equation*}
\SelectTips{cm}{}\xymatrix{
{U_{\widetilde{\mathbb X}(L/K)}^\diamond/U_{\mathbb X(L/K)}}
\ar[r]^{\text{can.}}\ar[d]_{\widetilde{\mathcal N}^{Coleman}_{L/M}} & 
{U_{\widetilde{\mathbb X}(L/K)}^\diamond/Y_{L/K}}
\ar[d]^{\widetilde{\mathcal N}^{Coleman}_{L/M}} \\
{U_{\widetilde{\mathbb X}(M/K)}^\diamond/U_{\mathbb X(M/K)}}
\ar[r]^{\text{can.}} & 
{U_{\widetilde{\mathbb X}(M/K)}^\diamond/Y_{M/K}}
}
\end{equation*}
is commutative, which is obvious. Then pasting this square with the 
square eq. (\ref{square-L/M-U}) as
\begin{equation*}
\SelectTips{cm}{}\xymatrix{
{Gal(L/K)}\ar[r]^-{\phi_{L/K}^{(\varphi)}}\ar[d]_{\text{res}_M} & 
{U_{\widetilde{\mathbb X}(L/K)}^\diamond/U_{\mathbb X(L/K)}}
\ar[r]^{\text{can.}}\ar[d]^{\widetilde{\mathcal N}^{Coleman}_{L/M}} & 
{U_{\widetilde{\mathbb X}(L/K)}^\diamond/Y_{L/K}}
\ar[d]^{\widetilde{\mathcal N}^{Coleman}_{L/M}} \\
{Gal(M/K)}\ar[r]^-{\phi_{M/K}^{(\varphi)}} & 
{U_{\widetilde{\mathbb X}(M/K)}^\diamond/U_{\mathbb X(M/K)}}
\ar[r]^{\text{can.}} & 
{U_{\widetilde{\mathbb X}(M/K)}^\diamond/Y_{M/K}}
}
\end{equation*}
the commutativity of the square eq. (\ref{square-L/M-Y}) follows. 
\end{proof}
Now, let $F/K$ be a finite sub-extension of $L/K$. 
Then, as $F$ is \textit{compatible} with $(K,\varphi)$, in the 
sense of \cite{koch-deshalit}
pp. 89, we may fix the Lubin-Tate splitting over $F$ to be
$\varphi_F=\varphi_K=\varphi$. 
Thus, there exists the chain of field extensions 
\begin{equation*}
K\subseteq F\subseteq L\subseteq K_\varphi\subseteq F_{\varphi},
\end{equation*}
where $L$ is a totally-ramified $APF$-Galois extension over $F$ by
Lemma \ref{apftower}. As
$\pmb{\mu}_p(K^{sep})=\pmb{\mu}_p(F^{sep})$, the inclusion
$\pmb{\mu}_p(F^{sep})\subset F$ is satisfied. That is, the local field
$F$ satisfies the condition given by eq. (\ref{rootofunity}).

Now, the basic ascending chain of sub-extensions in $L/K$ fixed 
in eq. (\ref{basic-ascending-chain-L/K}) base changed to $F$ 
\begin{equation}
\label{basic-ascending-chain-L/F}
F=K_oF\subseteq K_1F\subseteq\cdots\subseteq K_iF
\subseteq\cdots\subseteq L
\end{equation}
is \textit{almost} a basic ascending chain of sub-extensions in $L/F$, which
follows by the isomorphisms
$\text{res}_{K_i} : \text{Gal}(K_iF/F)\simeq\text{Gal}(K_i/K_i\cap F)$ and 
$\text{res}_{K_i} : \text{Gal}(K_{i+1}F/K_iF)\simeq\text{Gal}(K_{i+1}/K_{i+1}
\cap K_iF)$ 
for every $0\leq i\in\mathbb Z$. Moreover, by primitive element theorem,
there exists an $0\leq i_o\in\mathbb Z$, such that $F\subseteq K_{i_o}$.
Choosing the minimal such $i_o$, the ascending chain 
(\ref{basic-ascending-chain-L/F}) becomes
\begin{equation*}
F=K_oF\subseteq K_1F\subseteq\cdots\subseteq K_{i_o-1}F=K_{i_o}
\subset K_{i_o+1}\subset\cdots\subset L .
\end{equation*}
For each $1\leq i\in\mathbb Z$, denote by $\sigma_i$ the element
in $\text{Gal}(\widetilde{L}/\widetilde{K})$ that satisfies
\begin{equation*}
<\sigma_i\mid_{K_i}>=\text{Gal}(K_i/K_{i-1}).
\end{equation*} 
Now, for $1\leq i\in\mathbb Z$, introduce the elements $\sigma_i^*$
in $\text{Gal}(\widetilde{L}/\widetilde{F})$ that satisfies
\begin{equation*} 
<\sigma_i^*\mid_{K_iF}>=\text{Gal}(K_iF/K_{i-1}F)
\end{equation*}
as follows :
\begin{itemize}
\item[(i)] in case $i>i_o$, then define $\sigma_i^*=\sigma_i$ ; 
\item[(ii)] in case $i\leq i_o$, then define 
\begin{equation*}
\sigma_i^*=
\begin{cases}
\sigma_i, & K_{i-1}F\subset K_{i}F ; \\
\text{id}_{K_iF}, & K_{i-1}F=K_{i}F .
\end{cases}
\end{equation*}
\end{itemize}
It is then clear that, for each $1\leq i\in\mathbb Z$, the elements 
$\sigma_i^*$ of $\text{Gal}(\widetilde{L}/\widetilde{F})$
satisfies
\begin{equation*}
<\sigma_i^*\mid_{K_iF}>=\text{Gal}(K_iF/K_{i-1}F),
\end{equation*}
and for almost all $i$, $\sigma_i^*=\sigma_i$.
Moreover, for each $1\leq k\in\mathbb Z$, the square
\begin{equation*}
\SelectTips{cm}{}\xymatrix{
{\text{Gal}(K_{k+1}F/K_kF)}\ar[r]^{\Xi_{K_{k+1}F/K_kF}}
\ar[d]_{r_{K_{k+1}}} & 
{U_{\widetilde{K_{k+1}F}}/U_{\widetilde{K_{k+1}F}}^{\sigma_{k+1}^*-1}}
\ar[d]^{\widetilde{N}_{K_{k+1}F/K_{k+1}}^*} \\
{\text{Gal}(K_{k+1}/K_k)}\ar[r]^{\Xi_{K_{k+1}/K_k}} & 
{U_{\widetilde{K}_{k+1}}/U_{\widetilde{K}_{k+1}}
^{\sigma_{k+1}-1}},
}
\end{equation*} 
is commutative, as
$\widetilde{N}_{K_{k+1}F/K_{k+1}}(\pi_{K_{k+1}F})=\pi_{K_{k+1}}$
by the \textit{norm coherence} of the Lubin-Tate labelling
$(\pi_{K'})_{\underset{[K':K]<\infty}{\underbrace{K\subseteq K'}}
\subset K_\varphi}$. Hence,
\begin{equation*}
\widetilde{N}_{K_{k+1}F/K_{k+1}}^*\left(T_k^{(L/F)}\right)=T_k^{(L/K)}.
\end{equation*}
Now, by Theorem \ref{split-exact-sequence} there exists an arrow
\begin{equation*}
h_k^{(L/F)}:\prod_{1\leq i\leq k}
U_{\widetilde{K_kF}}^{\sigma_i^*-1}\rightarrow
\left(\prod_{1\leq i\leq k+1}U_{\widetilde{K_{k+1}F}}
^{\sigma_i^*-1}\right)/U_{\widetilde{K_{k+1}F}}
^{\sigma_{k+1}^*-1}
\end{equation*}
which splits the exact sequence
\begin{equation}
\label{split-short-exact-L/F}
\SelectTips{cm}{}\xymatrix@1{
1\ar[r] & T_k^{(L/F)'}\ar[r] & 
{\left(\prod_{1\leq i\leq k+1}U_{\widetilde{K_{k+1}F}}
^{\sigma_i^*-1}\right)/U_{\widetilde{K_{k+1}F}}^{\sigma_{k+1}^*-1}}
\ar[r]^-{\widetilde{N}_{K_{k+1}F/K_kF}} &
{\prod_{1\leq i\leq k}U_{\widetilde{K_kF}}
^{\sigma_i^*-1}}
\ar[r]\ar@/_2.2pc/[l]_{h_k^{(L/F)}}&1} .
\end{equation}
Now, choose an arrow
\begin{equation}
\label{h-L/K}
h_k^{(L/K)}:\prod_{1\leq i\leq k}
U_{\widetilde{K_k}}^{\sigma_i-1}\rightarrow
\left(\prod_{1\leq i\leq k+1}U_{\widetilde{K_{k+1}}}
^{\sigma_i-1}\right)/U_{\widetilde{K_{k+1}}}
^{\sigma_{k+1}-1}
\end{equation}
which splits the exact sequence (\ref{split-short-exact-L/K})
in such a way that
\begin{equation}
\label{diagram-h-L/F}
\SelectTips{cm}{}\xymatrix{
{\left(\prod_{1\leq i\leq k+1}U_{\widetilde{K_{k+1}F}}
^{\sigma_i^*-1}\right)/U_{\widetilde{K_{k+1}F}}^{\sigma_{k+1}^*-1}}
\ar[r]^-{\widetilde{N}_{K_{k+1}F/K_kF}}
\ar[d]_{\widetilde{N}^*_{K_{k+1}F/K_{k+1}}} & 
{\prod_{1\leq i\leq k}U_{\widetilde{K_kF}}^{\sigma_i^*-1}}
\ar@/_2.2pc/[l]_{h_k^{(L/F)}}
\ar[d]^{\widetilde{N}_{K_kF/K_k}} \\
{\left(\prod_{1\leq i\leq k+1}U_{\widetilde{K}_{k+1}}
^{\sigma_i-1}\right)
/U_{\widetilde{K}_{k+1}}^{\sigma_{k+1}-1}}
\ar[r]^-{\widetilde{N}_{K_{k+1}/K_k}} & 
{\prod_{1\leq i\leq k}U_{\widetilde{K}_k}
^{\sigma_i-1}}
\ar@/^2.2pc/[l]^{h_k^{(L/K)}}
}
\end{equation}
is a commutative square. The arrow eq. (\ref{h-L/K}) is constructed by
following the same lines of the construction of the arrow eq. (\ref{h-M/K}).
For each $1\leq k\in\mathbb Z$, consider any map
\begin{equation}
\label{g-L/K}
g_k^{(L/K)}:\prod_{1\leq i\leq k}U_{\widetilde{K}_k}
^{\sigma_i-1}\rightarrow\prod_{1\leq i\leq k+1}
U_{\widetilde{K}_{k+1}}^{\sigma_i-1}
\end{equation}
which commutes the following square
\begin{equation}
\label{diagram-g-L/F}
\SelectTips{cm}{}\xymatrix{
{\prod_{1\leq i\leq k}U_{\widetilde{K_kF}}^{\sigma_i^*-1}}
\ar[r]^-{g_k^{(L/F)}}\ar[d]_{\widetilde{N}_{K_kF/K_k}} & 
{\prod_{1\leq i\leq k+1}U_{\widetilde{K_{k+1}F}}^{\sigma_i^*-1}}
\ar[d]^{\widetilde{N}_{K_{k+1}F/K_{k+1}}} \\
{\prod_{1\leq i\leq k}U_{\widetilde{K}_k}
^{\sigma_i-1}}\ar@{.>}[r]^-{g_k^{(L/K)}} & 
{\prod_{1\leq i\leq k+1}U_{\widetilde{K}_{k+1}}
^{\sigma_i-1}}.
}
\end{equation}
Note that, such a map satisfies
\begin{equation*}
h_k^{(L/K)}=g_k^{(L/K)}\mod{U_{\widetilde{K}_{k+1}}^{\sigma_{k+1}-1}}.
\end{equation*}
In fact, by the commutative diagram (\ref{diagram-h-L/F}),
for any $w\in\prod_{1\leq i\leq k}U_{\widetilde{K}_k}^{\sigma_i-1}$, 
there exists 
$v\in\prod_{1\leq i\leq k}U_{\widetilde{K_kF}}^{\sigma_i^*-1}$ such that
$w=\widetilde{N}_{K_kF/K_k}(v)$, and
\begin{equation*}
\begin{aligned}
h_k^{(L/K)}(w)&=h_k^{(L/K)}\left(\widetilde{N}_{K_kF/K_k}(v)\right)\\
&=\widetilde{N}^*_{K_{k+1}F/K_{k+1}}\left(h_k^{(L/F)}(v)\right)\\
&=\widetilde{N}^*_{K_{k+1}F/K_{k+1}}\left(g_k^{(L/F)}(v).
U_{\widetilde{K_{k+1}F}}^{\sigma_{k+1}^*-1}\right)\\
&=\widetilde{N}_{K_{k+1}F/K_{k+1}}\left(g_k^{(L/F)}(v)\right).
U_{\widetilde{K}_{k+1}}^{\sigma_{k+1}-1},
\end{aligned}
\end{equation*}
and by the commutativity of the diagram (\ref{diagram-g-L/F}),
\begin{equation*}
\begin{aligned}
\widetilde{N}_{K_{k+1}F/K_{k+1}}\left(g_k^{(L/F)}(v)\right)&=
g_k^{(L/K)}\left(\widetilde{N}_{K_kF/K_k}(v)\right)\\
&=g_k^{(L/K)}(w).
\end{aligned}
\end{equation*}
Thus, the equality
\begin{equation*}
h_k^{(L/K)}(w)=g_k^{(L/K)}(w).U_{\widetilde{K}_{k+1}}^{\sigma_{k+1}-1}
\end{equation*}
follows for every 
$w\in\prod_{1\leq i\leq k}U_{\widetilde{K}_k}^{\sigma_i-1}$.

Now, for each $1\leq i\in\mathbb Z$, introduce the map
\begin{equation*}
f_i^{(L/K)}: U_{\widetilde{K}_i}^{\sigma_i-1}\rightarrow 
U_{\widetilde{\mathbb X}(L/K)}
\end{equation*}
by
\begin{equation*}
f_i^{(L/K)}(w)=\Lambda_{F/K}\left(f_i^{(L/F)}(v)\right),
\end{equation*}
where $v\in U_{\widetilde{K_iF}}^{\sigma_i^*-1}$ is any element satisfying
$\widetilde{N}_{K_iF/K_i}(v)=w\in U_{\widetilde{K}_i}^{\sigma_i-1}$. 
Note that, if 
$v'\in U_{\widetilde{K_iF}}^{\sigma_i^*-1}$ such that
$\widetilde{N}_{K_iF/K_i}(v')=w$, then
$\Lambda_{F/K}\left(f_i^{(L/F)}(v)\right)=\Lambda_{F/K}
\left(f_i^{(L/F)}(v')\right)$. 
In fact, there 
exists $u\in\ker\left(\widetilde{N}_{K_iF/K_i}\right)$ such that
$v'=vu$. Thus, we have to verify that
$\Lambda_{F/K}\left(f_i^{(L/F)}(v)\right)=
\Lambda_{F/K}\left(f_i^{(L/F)}(vu)\right)$. That is, for each 
$1\leq j\in\mathbb Z$, we have to check the equality
\begin{equation}
\label{f-L/K-well-defined}
\text{Pr}_{\widetilde{K}_j}\left(\Lambda_{F/K}\left(f_i^{(L/F)}(v)
\right)\right)=
\text{Pr}_{\widetilde{K}_j}\left(\Lambda_{F/K}\left(f_i^{(L/F)}(vu)
\right)\right).
\end{equation}
Now, note that, for $j>i$,
\begin{equation*}
\begin{aligned}
\text{Pr}_{\widetilde{K}_j}\left(\Lambda_{F/K}\left(f_i^{(L/F)}(v)
\right)\right)&=
\widetilde{N}_{K_jF/K_j}\left(\text{Pr}_{\widetilde{K_jF}}
\left(\Lambda_{F/K}(f_i^{(L/F)}(v))\right)\right) \\
&=\widetilde{N}_{K_jF/K_j}\left(\text{Pr}_{\widetilde{K_jF}}
\left(f_i^{(L/F)}(v)\right)\right)\\
&=\widetilde{N}_{K_jF/K_j}\left(g_{j-1}^{(L/F)}\circ\cdots
\circ g_{i}^{(L/F)}(v)\right)\\
&=g_{j-1}^{(L/K)}\circ\cdots\circ g_{i}^{(L/K)}
\left(\widetilde{N}_{K_iF/K_i}(v)\right),
\end{aligned}
\end{equation*}
by the commutativity of the square eq. (\ref{diagram-g-L/F}). Thus, equality
(\ref{f-L/K-well-defined}) follows, as $\widetilde{N}_{K_iF/K_i}(v)
=\widetilde{N}_{K_iF/K_i}(vu)$. Thus, the map 
\begin{equation*}
f_i^{(L/K)}: U_{\widetilde{K}_i}^{\sigma_i-1}\rightarrow 
U_{\widetilde{\mathbb X}(L/K)}
\end{equation*}
is well-defined. Moreover, for $j>i$,
\begin{equation*}
\text{Pr}_{\widetilde{K}_j}\circ f_i^{(L/K)}=\left(g_{j-1}^{(L/K)}\circ
\cdots\circ g_i^{(L/K)}\right)\mid_{U_{\widetilde{K}_i}^{\sigma_i-1}}.
\end{equation*}
In fact, for $w\in U_{\widetilde{K}_i}^{\sigma_i-1}$,
there exists $v\in U_{\widetilde{K_iF}}^{\sigma_i^*-1}$ such that
$\widetilde{N}_{K_iF/K_i}(v)=w$, and 
$f_i^{(L/K)}(w)=\Lambda_{F/K}\left(f_i^{(L/F)}(v)\right)$. 
That is, the following square
\begin{equation}
\label{square-F/K}
\SelectTips{cm}{}\xymatrix{
{U_{\widetilde{K_iF}}^{\sigma_i^*-1}}\ar[r]^{f_i^{(L/F)}}
\ar[d]_{\widetilde{N}_{K_iF/K_i}} & 
{U_{\widetilde{\mathbb X}(L/F)}}\ar[d]^{\Lambda_{F/K}} \\
{U_{\widetilde{K}_i}^{\sigma_i-1}}
\ar[r]^{f_i^{(L/K)}} & {U_{\widetilde{\mathbb X}(L/K)}}
}
\end{equation}
is commutative. Thus, for $j>i$,
\begin{equation*}
\begin{aligned}
\text{Pr}_{\widetilde{K}_j}\circ f_i^{(L/K)}(w)&=
\text{Pr}_{\widetilde{K}_j}\circ\Lambda_{F/K}
\left(f_i^{(L/F)}(v)\right)\\
&=\widetilde{N}_{K_jF/K_j}\left(\text{Pr}_{\widetilde{K_jF}}
\circ f_i^{(L/F)}(v)\right)\\
&=\widetilde{N}_{K_jF/K_j}\left((g_{j-1}^{(L/F)}\circ\cdots
\circ g_i^{(L/F)})(v)\right)\\
&=\left(g_{j-1}^{(L/K)}\circ\cdots\circ g_i^{(L/K)}\right)
\left(\widetilde{N}_{K_iF/K_i}(v)\right),
\end{aligned}
\end{equation*}
by the commutativity of the diagram (\ref{diagram-g-L/F}), which is the 
desired equality.

Let, for each $0<i\in\mathbb Z$, 
\begin{equation*}
Z_i^{(L/K)}=\text{im}(f_i^{(L/K)}).
\end{equation*} 
Then, by Lemma \ref{infinite-product} or by Lemma 4 of \cite{fesenko-2001}, 
for $z^{(i)}\in Z_i^{(L/K)}$, the product $\prod_i z^{(i)}$ converges to 
an element in $U_{\widetilde{\mathbb X}(L/K)}^\diamond$. Let
\begin{equation*}
Z_{L/K}\left(\{K_i,f_i^{(L/K)}\}\right)=\left\{\prod_iz^{(i)}:z^{(i)}\in
Z_i^{(L/K)}\right\},
\end{equation*}
which is a topological subgroup of $U_{\widetilde{\mathbb X}(L/K)}^\diamond$.
Introduce the topological $Gal(L/K)$-submodule 
$Y_{L/K}\left(\{K_i,f_i^{(L/K)}\}\right)=Y_{L/K}$ of 
$U_{\widetilde{\mathbb X}(L/K)}^\diamond$ by
\begin{equation*}
Y_{L/K}=\left\{y\in U_{\widetilde{\mathbb X}(L/K)}: y^{1-\varphi}\in 
Z_{L/K}\left(\{K_i,f_i^{(L/K)}\}\right)\right\}.
\end{equation*}
\begin{lemma}
The continuous homomorphism
$\Lambda_{F/K}:\widetilde{\mathbb X}(L/F)^\times\rightarrow
\widetilde{\mathbb X}(L/K)^\times$ introduced by eq.s (\ref{Lambda-map})
and (\ref{Lambda-map-definition}) further satisfies
\begin{itemize}
\item[(i)] 
$\Lambda_{F/K}\left(Z_{L/F}(\{K_iF,f_i^{(L/F)}\})\right)
\subseteq Z_{L/K}\left(\{K_i,f_i^{(L/K)}\}\right)$;
\item[(ii)]
$\Lambda_{F/K}(Y_{L/F})\subseteq Y_{L/K}$.
\end{itemize}
\end{lemma}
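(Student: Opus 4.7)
The plan is to model the proof on the preceding lemma about $\widetilde{\mathcal N}_{L/M}$, exploiting the analogy between the commutative square (\ref{square-L/M}) there and the commutative square (\ref{square-F/K}) here.

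For part (i), I would start with an arbitrary element $z \in Z_{L/F}(\{K_iF, f_i^{(L/F)}\})$, written as a convergent infinite product $z = \prod_{i} z^{(i)}$ with $z^{(i)} \in \text{im}(f_i^{(L/F)}) = Z_i^{(L/F)}$. By the continuity of $\Lambda_{F/K}$ (as a homomorphism of topological fields, cf.\ eq.\ (\ref{Lambda-map})), we have
\begin{equation*}
\Lambda_{F/K}(z) \;=\; \prod_{i} \Lambda_{F/K}\bigl(z^{(i)}\bigr),
\end{equation*}
so the whole matter reduces to showing $\Lambda_{F/K}(Z_i^{(L/F)}) \subseteq Z_i^{(L/K)}$ for every $1 \leq i \in \mathbb Z$. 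For this, pick $v \in U_{\widetilde{K_iF}}^{\sigma_i^*-1}$ with $z^{(i)} = f_i^{(L/F)}(v)$, and set $w := \widetilde{N}_{K_iF/K_i}(v)$, which lies in $U_{\widetilde{K}_i}^{\sigma_i-1}$ because $\widetilde{N}_{K_iF/K_i}$ sends $U_{\widetilde{K_iF}}^{\sigma_i^*-1}$ into $U_{\widetilde{K}_i}^{\sigma_i-1}$. The commutativity of the square (\ref{square-F/K}) then yields
\begin{equation*}
\Lambda_{F/K}\bigl(f_i^{(L/F)}(v)\bigr) \;=\; f_i^{(L/K)}\bigl(\widetilde{N}_{K_iF/K_i}(v)\bigr) \;=\; f_i^{(L/K)}(w) \;\in\; Z_i^{(L/K)},
\end{equation*}
giving (i).

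For part (ii), the key observation is that $\Lambda_{F/K}$ commutes with the action of the fixed Lubin-Tate splitting $\varphi$: indeed, $\varphi$ acts on $\widetilde{\mathbb X}(L/F)$ and $\widetilde{\mathbb X}(L/K)$ through its action on the $\widetilde K$-coordinate of each component, and $\widetilde{N}_{F/K}$ commutes with $\varphi$ as the norm along a finite extension is Galois-equivariant. Hence for $y \in Y_{L/F}$,
\begin{equation*}
\Lambda_{F/K}(y)^{1-\varphi} \;=\; \Lambda_{F/K}\bigl(y^{1-\varphi}\bigr),
\end{equation*}
and since $y^{1-\varphi} \in Z_{L/F}(\{K_iF, f_i^{(L/F)}\})$ by the definition of $Y_{L/F}$, part (i) gives $\Lambda_{F/K}(y)^{1-\varphi} \in Z_{L/K}(\{K_i, f_i^{(L/K)}\})$. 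By the definition of $Y_{L/K}$, this means $\Lambda_{F/K}(y) \in Y_{L/K}$.

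The only genuinely non-routine point is the reduction in (i) from the convergent infinite product to the factor-by-factor statement, together with the identification $\Lambda_{F/K} \circ f_i^{(L/F)} = f_i^{(L/K)} \circ \widetilde{N}_{K_iF/K_i}$, which is precisely what square (\ref{square-F/K}) was set up to express; once (i) is in hand, (ii) is formal. I expect no serious obstacle, just the bookkeeping of verifying that the $\varphi$-action commutes with $\Lambda_{F/K}$ (which, as above, is a direct consequence of Galois-equivariance of the norm $\widetilde N_{F/K}$ under Frobenius on $\widetilde K/K$).
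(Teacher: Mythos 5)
Your proposal is correct and follows essentially the same route as the paper's own proof: split the convergent product using continuity of $\Lambda_{F/K}$, deduce $\Lambda_{F/K}(Z_i^{(L/F)})\subseteq Z_i^{(L/K)}$ from the commutative square (\ref{square-F/K}), and then obtain (ii) formally from (i) via $\Lambda_{F/K}(y^{1-\varphi})=\Lambda_{F/K}(y)^{1-\varphi}$. Your added justification that $\Lambda_{F/K}$ commutes with $\varphi$ (Galois-equivariance of the norm on the $\widetilde K$-coordinate) is a detail the paper leaves implicit, but it is the right one.
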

\begin{proof} 
\begin{itemize}
\item[(i)]
For any choice of $z^{(i)}\in Z_i^{(L/F)}$, the continuity of the 
multiplicative arrow 
$\Lambda_{F/K}:\widetilde{\mathbb X}(L/F)^\times\rightarrow
\widetilde{\mathbb X}(L/K)^\times$ yields
\begin{equation*}
\Lambda_{F/K}\left(\prod_iz^{(i)}\right)=\prod_i\Lambda_{F/K}(z^{(i)}),
\end{equation*}
where $\Lambda_{F/K}(z^{(i)})\in Z_i^{(L/K)}$ by the 
commutative square (\ref{square-F/K}).
\item[(ii)]
Now let $y\in Y_{L/F}$. Then, 
$y^{1-\varphi}\in Z_{L/F}(\{K_iF,f_i^{(L/F)}\})$. Thus,
$\Lambda_{F/K}(y^{1-\varphi})
=\Lambda_{F/K}(y)^{1-\varphi}\in Z_{L/K}
\left(\{K_i,f_i^{(L/K)}\}\right)$ by part (i). Now the result follows.
\end{itemize}
\end{proof}
Thus, the homomorphism
$\Lambda_{F/K}:\widetilde{\mathbb X}(L/F)^\times\rightarrow
\widetilde{\mathbb X}(L/K)^\times$ of eq. (\ref{Lambda-map}) 
defined by eq. (\ref{Lambda-map-definition}) induces a group homomorphism,
\begin{equation}
\label{lambda-map-2}
\lambda_{F/K}:
U_{\widetilde{\mathbb X}(L/F)}^\diamond/Y_{L/F}
\rightarrow U_{\widetilde{\mathbb X}(L/K)}^\diamond/Y_{L/K}
\end{equation}
and defined by
\begin{equation}
\label{lambda-map-definition-2}
\lambda_{F/K}(\overline{U})=
\Lambda_{F/K}(U).Y_{L/K},
\end{equation}
for every $U\in U_{\widetilde{\mathbb X}(L/F)}^\diamond$, where
$\overline{U}$ denotes, as usual, the coset $U.Y_{L/F}$ in
$U_{\widetilde{\mathbb X}(L/F)}^\diamond/Y_{L/F}$.

Let
\begin{equation*}
\Phi_{L/F}^{(\varphi)}:\text{Gal}(L/F)\rightarrow
U_{\widetilde{\mathbb X}{(L/F)}}^\diamond/Y_{L/F}
\end{equation*}
be the corresponding Fesenko reciprocity map defined for the extension 
$L/F$, where $Y_{L/F}=Y_{L/F}\left(\{K_iF,f_i^{(L/F)}\}\right)$.
\begin{theorem}
\label{square2}
For the finite sub-extension $F/K$ of $L/K$, the square
\begin{equation}
\label{square-F/K-Y}
\SelectTips{cm}{}\xymatrix{
{\text{Gal}(L/F)}\ar[r]^-{\Phi_{L/F}^{(\varphi)}}\ar[d]_{\text{inc.}} & 
{U_{\widetilde{\mathbb X}{(L/F)}}^\diamond/Y_{L/F}}\ar[d]^{\lambda_{F/K}} \\
{\text{Gal}(L/K)}\ar[r]^-{\Phi_{L/K}^{(\varphi)}} & 
{U_{\widetilde{\mathbb X}{(L/K)}}^\diamond/Y_{L/K}},
}
\end{equation}
where the right-vertical arrow
\begin{equation*}
\lambda_{F/K}:U_{\widetilde{\mathbb X}{(L/F)}}^\diamond/Y_{L/F}
\rightarrow U_{\widetilde{\mathbb X}{(L/K)}}^\diamond/Y_{L/K}
\end{equation*}
is defined by eq.s (\ref{lambda-map-2}) and (\ref{lambda-map-definition-2}), 
is commutative.
\end{theorem}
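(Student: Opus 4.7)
The plan is to mirror the proof of Theorem \ref{square1} exactly, with $\lambda_{F/K}$ playing the role played there by $\widetilde{\mathcal N}_{L/M}^{\text{Coleman}}$. By construction, $\Phi_{L/F}^{(\varphi)}=c_{L/F}\circ\phi_{L/F}^{(\varphi)}$ and $\Phi_{L/K}^{(\varphi)}=c_{L/K}\circ\phi_{L/K}^{(\varphi)}$, where $c_{L/?}$ denotes the canonical projection from $U_{\widetilde{\mathbb X}(L/?)}^\diamond/U_{\mathbb X(L/?)}$ onto $U_{\widetilde{\mathbb X}(L/?)}^\diamond/Y_{L/?}$ (this makes sense since $U_{\mathbb X(L/?)}\subseteq Y_{L/?}$). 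Accordingly, I will present the square (\ref{square-F/K-Y}) as the outer rectangle of a two-panel diagram, with the already-established square (\ref{square-F/K-U}) on the left and, on the right, the square whose horizontal arrows are the canonical projections $c_{L/F}$ and $c_{L/K}$ and whose vertical arrows are the $U_{\mathbb X}$-quotient version of $\lambda_{F/K}$ (defined by eq.s (\ref{lambda-map}) and (\ref{lambda-map-definition})) and the $Y$-quotient version of $\lambda_{F/K}$ (defined by eq.s (\ref{lambda-map-2}) and (\ref{lambda-map-definition-2})).

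The first step is then to check that this right-hand square commutes. This is essentially tautological: both vertical maps are induced by the same underlying continuous homomorphism $\Lambda_{F/K}:\widetilde{\mathbb X}(L/F)^\times\to\widetilde{\mathbb X}(L/K)^\times$, and the canonical projections $c_{L/F},c_{L/K}$ simply factor out a larger normal subgroup. Hence the diagram commutes as soon as the right-hand $\lambda_{F/K}$ is well-defined, which is precisely the content of the inclusion $\Lambda_{F/K}(Y_{L/F})\subseteq Y_{L/K}$ established in the preceding lemma. Pasting the two commutative squares then yields at once the commutativity of (\ref{square-F/K-Y}), since the horizontal composites along the top and the bottom rows are exactly $\Phi_{L/F}^{(\varphi)}$ and $\Phi_{L/K}^{(\varphi)}$ respectively.

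The only delicate step is therefore the inclusion $\Lambda_{F/K}(Y_{L/F})\subseteq Y_{L/K}$ invoked above; this is not a formality but rather depends crucially on the coordinated choices of splittings $h_k^{(L/F)},h_k^{(L/K)}$, of lifts $g_k^{(L/F)},g_k^{(L/K)}$, and of maps $f_i^{(L/F)},f_i^{(L/K)}$ engineered in the pages immediately preceding the statement, so as to force the commutativity of the square (\ref{square-F/K}). That compatibility is exactly what makes $\Lambda_{F/K}$ send $Z_{L/F}(\{K_iF,f_i^{(L/F)}\})$ into $Z_{L/K}(\{K_i,f_i^{(L/K)}\})$, and hence $Y_{L/F}$ into $Y_{L/K}$. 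With that ingredient in hand, the remainder is a routine two-square paste, in perfect parallel with the proof of Theorem \ref{square1}.
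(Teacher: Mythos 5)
Your proposal is correct and follows essentially the same route as the paper: the paper also writes $\Phi^{(\varphi)}_{L/F}$ and $\Phi^{(\varphi)}_{L/K}$ as the compositions of $\phi^{(\varphi)}_{L/F}$, $\phi^{(\varphi)}_{L/K}$ with the canonical projections, observes that the square formed by these projections and the two versions of $\lambda_{F/K}$ commutes (its well-definedness resting on the lemma $\Lambda_{F/K}(Y_{L/F})\subseteq Y_{L/K}$), and pastes it with the already-proved square (\ref{square-F/K-U}). Your additional remark correctly identifies that the only substantive input is that lemma, which depends on the coordinated choices of $h_k$, $g_k$, $f_i$ made before the theorem.
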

\begin{proof}
It suffices to prove that the square
\begin{equation*}
\SelectTips{cm}{}\xymatrix{
{U_{\widetilde{\mathbb X}(L/F)}^\diamond/U_{\mathbb X(L/F)}}
\ar[r]^{\text{can.}}\ar[d]_{\lambda_{F/K}} & 
{U_{\widetilde{\mathbb X}(L/F)}^\diamond/Y_{L/F}}
\ar[d]^{\lambda_{F/K}} \\
{U_{\widetilde{\mathbb X}(L/K)}^\diamond/U_{\mathbb X(L/K)}}
\ar[r]^{\text{can.}} & 
{U_{\widetilde{\mathbb X}(L/K)}^\diamond/Y_{L/K}}
}
\end{equation*}
is commutative, which is obvious. Then pasting this square with the 
square eq. (\ref{square-F/K-U}) as
\begin{equation*}
\SelectTips{cm}{}\xymatrix{
{Gal(L/F)}\ar[r]^-{\phi_{L/F}^{(\varphi)}}\ar[d]_{\text{inc.}} & 
{U_{\widetilde{\mathbb X}(L/F)}^\diamond/U_{\mathbb X(L/F)}}
\ar[r]^{\text{can.}}\ar[d]^{\lambda_{L/F}} & 
{U_{\widetilde{\mathbb X}(L/F)}^\diamond/Y_{L/F}}
\ar[d]^{\lambda_{L/F}} \\
{Gal(L/K)}\ar[r]^-{\phi_{L/K}^{(\varphi)}} & 
{U_{\widetilde{\mathbb X}(L/K)}^\diamond/U_{\mathbb X(L/K)}}
\ar[r]^{\text{can.}} & 
{U_{\widetilde{\mathbb X}(L/K)}^\diamond/Y_{L/K}}
}
\end{equation*}
the commutativity of the square eq. (\ref{square-F/K-Y}) follows. 
\end{proof}
If $L/K$ is furthermore a finite extension, then the square
\begin{equation*}
\SelectTips{cm}{}\xymatrix{
{U_{\widetilde{\mathbb X}{(L/K)}}^\diamond/Y_{L/K}}
\ar[r]^-{H_{L/K}^{(\varphi)}}\ar[d]_{\text{Pr}_{\widetilde{K}}} & 
{\text{Gal}(L/K)}\ar[d]^{\txt{mod \text{Gal}(L/K)}'} \\
{U_K/N_{L/K}U_L}\ar[r]^-{h_{L/K}} & {\text{Gal}(L/K)^{ab}}
}
\end{equation*}
commutes. Thus \textit{the inverse $H_{L/K}^{(\varphi)}
=(\Phi_{L/K}^{(\varphi)})^{-1}$ of the 
Fesenko reciprocity map $\Phi_{L/K}^{(\varphi)}$ defined for $L/K$ 
is the generalization of the Hazewinkel map for the totally-ramified 
$APF$-Galois sub-extensions $L/K$ of $K_\varphi/K$ under the
assumption that the local field $K$ satisfies the condition given 
by eq. (\ref{rootofunity})}.

${}$

\noindent
\textsc{Department of Mathematics, Istanbul Bilgi University, Kurtulu\c s 
Deresi Cad. No. 47, Dolapdere, 34440 Beyo\v glu, Istanbul, TURKEY}

\noindent
E-mail : \texttt{ilhan$@$bilgi.edu.tr}

\noindent
\textsc{Department of Mathematics, Atilim University, Kizilca\c sar
K\"oy\"u, Incek, 06836 G\"olba\c s\i, Ankara, TURKEY}

\noindent
E-mail : \texttt{erols73$@$yahoo.com}

\end{document}